\documentclass[oneside,english]{amsart}
\usepackage[T1]{fontenc}
\usepackage[latin9]{inputenc}
\usepackage{amstext}
\usepackage{amsthm}
\usepackage{amssymb}

\makeatletter
\numberwithin{equation}{section}
\numberwithin{figure}{section}
\numberwithin{table}{section}
\theoremstyle{plain}
\newtheorem{thm}{\protect\theoremname}[section]
\theoremstyle{definition}
\newtheorem{defn}[thm]{\protect\definitionname}
\theoremstyle{plain}
\newtheorem{lem}[thm]{\protect\lemmaname}
\theoremstyle{plain}
\newtheorem{cor}[thm]{\protect\corollaryname}

\makeatother

\usepackage{babel}
\providecommand{\corollaryname}{Corollary}
\providecommand{\definitionname}{Definition}
\providecommand{\lemmaname}{Lemma}
\providecommand{\theoremname}{Theorem}

\begin{document}
\title{Generalized central sets theorem for partial semigroups and VIP systems }
\author{Anik Pramanick, Md Mursalim Saikh}
\email{pramanick.anik@gmail.com}
\address{Department of Mathematics, University of Kalyani, Kalyani, Nadia-741235,
West Bengal, India.}
\email{mdmsaikh2016@gmail.com}
\address{Department of Mathematics, University of Kalyani, Kalyani, Nadia-741235,
West Bengal, India.}
\begin{abstract}
The Central sets theorem was first introduced by H. Furstenberg \cite{key-3}
in terms of Dynamical systems. Later Hindman and Bergelson extended
the theorem using Stone-\v{C}ech compactification $\beta\mathbb{N}$
of $\mathbb{N}.$ In \cite{key-12} algebraic characterisation of
Central sets were done for semigroup and equivalence of Dynamical
and Algebraic characterisations was shown. D. De, N. Hindman, and
D. Strauss proved a stronger version of Central sets theorem for semigroup.
D. Phulara genaralize that theorem for commutative semigroup taking
a sequence of Central sets. Recently J. Podder and S. Pal established
the Phulara type generalisation of Central sets theorem near zero
\cite{key-11}. We did this for arbitrary adequate partial semigroup
and VIP systems.
\end{abstract}

\maketitle

\section{Introduction}

In Ramsey theory Central sets Theorem has its own importance. After
the foundation of both van der Waerden\textquoteright s and Hindman\textquoteright s
theorem, an immediate question appears if one can find a joint extension
of both of these theorems. In \cite{key-3}, using the methods of
Topological dynamics, Furstenberg defined the notions of Central Sets
and proved that if $\mathbb{N}$ is finitely colored, then one of
the color classes is Central.

Here we mention some notational definitions  that we use through out
this article.
\begin{defn}
$\left(a\right)$ Given a set $A,\mathcal{P}_{f}\left(A\right)=\left\{ F:\phi\neq F\subseteq A\text{ \text{and \ensuremath{F} is finite}}\right\} $
\end{defn}

$\left(b\right)$ $\mathcal{J}_{m}=\left\{ t\in\mathbb{N}^{m}:t\left(1\right)<t\left(2\right)<...<t\left(m\right)\right\} .$

$\left(c\right)$ $\mathcal{F}_{d}=\left\{ A\subset\mathbb{N}:\mid A\mid\leq d\right\} $

$\left(d\right)$ Let $\left(H_{n}\right)_{n=1}^{\infty}$be a sequence
by, $FU\left(\left(H_{n}\right)_{n=1}^{\infty}\right)=\left\{ \sum_{n\in F}H_{n}:F\in\mathcal{P}_{f}\left(\mathbb{N}\right)\right\} $

$\left(e\right)$ $\left[n\right]$ =$\left\{ 1,2,...,n\right\} $,
$n\in\mathbb{N}$

$\left(f\right)$ let $\left(H_{n}\right)_{n=1}^{\infty},H_{n}\in\mathcal{P}_{f}\left(\mathbb{N}\right),By\text{ }H_{n}<H_{n+1}\text{ we mean }\max H_{n}<\min H_{n+1}$

$\left(g\right)$ An IP ring $\mathcal{F}^{\left(1\right)}$ is a
set of the form $\mathcal{F}^{\left(1\right)}=FU\left(\left(\alpha_{n}\right)_{n=1}^{\infty}\right)$
where $\left(\alpha_{n}\right)_{n=1}^{\infty}$ is a sequence of members
of $\mathcal{P}_{f}\left(\mathbb{N}\right)$ such that $\max\alpha_{n}<\min\alpha_{n+1}$
for each n.
\begin{thm}
\label{anik}Let $l\in\mathbb{N}$ and for each $i\in\left[l\right],$
let $\left(y_{i,n}\right)_{n=1}^{\infty}$be a sequence in $\mathbb{Z}$.
Let $C$ be a $central$ subset of $\mathbb{N}.$ Then there exists
sequences $\left(a_{n}\right)_{n=1}^{\infty}$in $\mathbb{N}$ and
$\left(H_{n}\right)_{n=1}^{\infty}$ in $\mathcal{P}_{f}\left(\mathbb{N}\right)$
such that
\end{thm}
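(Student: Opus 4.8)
The plan is to prove the statement through the algebra of the Stone-\v{C}ech compactification $(\beta\mathbb{N},+)$ rather than through dynamics. First I would invoke the algebraic characterization mentioned in the introduction: $C$ is central exactly when $C$ belongs to some minimal idempotent $p=p+p$ lying in a minimal left ideal of $\beta\mathbb{N}$. Fix such a $p$. For $A\subseteq\mathbb{N}$ write $-x+A=\{y\in\mathbb{N}:x+y\in A\}$ and set $A^{\star}=\{x\in A:-x+A\in p\}$. The workhorse fact is that idempotency forces $A^{\star}\in p$ whenever $A\in p$, and that $-x+A^{\star}\in p$ for every $x\in A^{\star}$; this is what lets the partial sums built so far be ``absorbed'' at the next stage.

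The engine of the argument is a single combinatorial lemma: for every $A\in p$ and every $m\in\mathbb{N}$ there exist $a\in\mathbb{N}$ and $H\in\mathcal{P}_{f}(\mathbb{N})$ with $\min H>m$ such that
\[
a+\sum_{t\in H}y_{i,t}\in A\qquad\text{for every }i\in[l]\text{ simultaneously.}
\]
This says precisely that every central set is a $J$-set with respect to the finite family $(y_{i,n})_{i\in[l]}$. I would prove it by showing that the collection of ultrafilters all of whose members carry these configurations is a closed two-sided ideal of $\beta\mathbb{N}$, hence contains $K(\beta\mathbb{N})$ and in particular the minimal idempotent $p$; the content is a pigeonhole/compactness argument that makes the choice of $a$ and $H$ uniform across all $l$ colors at once.

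Granting the engine lemma, the two sequences are produced by recursion. At stage $n$ I maintain the hypothesis that for every nonempty $F\subseteq[n-1]$ and every $f\colon F\to[l]$ the sum $s(F,f)=\sum_{j\in F}\bigl(a_{j}+\sum_{t\in H_{j}}y_{f(j),t}\bigr)$ lies in $C^{\star}$. I then set $A_{n}=C^{\star}\cap\bigcap\{-s(F,f)+C^{\star}\}$, the intersection taken over all such $F,f$; this is a finite intersection of members of $p$, hence in $p$. Applying the engine lemma to $A_{n}$ with $m=\max H_{n-1}$ yields $a_{n}$ and $H_{n}$ with $\min H_{n}>\max H_{n-1}$, and a short case check (splitting a new index set according to whether it contains $n$) shows the hypothesis persists at stage $n$. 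The separation $\max H_{n}<\min H_{n+1}$ is guaranteed by the threshold $m$, and the union over all stages delivers the claimed conclusion.

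The main obstacle is the engine lemma, i.e.\ verifying that central sets genuinely contain these simultaneous $J$-set configurations; establishing that the relevant family of ultrafilters is a two-sided ideal and locating the idempotent inside it is the delicate part. In the partial-semigroup and VIP refinement one must additionally check that adequacy of the partial operation keeps all the sums $a_{n}+\sum_{t\in H_{n}}y_{f(n),t}$ and their combinations defined, so that the algebraic manipulations above remain legitimate.
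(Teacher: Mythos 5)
Your overall architecture is sound, and it is in fact the same architecture this paper uses in Section 3 for its generalizations (following \cite{key-2} and \cite{key-6}): a minimal idempotent $p$ with $C\in p$, the sets $C^{\star}=\{x\in C:-x+C\in p\}$ with the absorption property $-x+C^{\star}\in p$ for $x\in C^{\star}$, a J-set ``engine lemma'' with a lower threshold on $\min H$ (the role played here by Lemma \ref{lem2}), and the finite-intersection recursion over all $F$ and $f\colon F\to[l]$. Note, however, that the paper itself never proves Theorem \ref{anik}: it quotes it as Furstenberg's theorem, whose proof in \cite{key-3} is dynamical, and the algebraic engine for its own later results is imported from \cite{key-6}.

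The genuine gap is your treatment of the engine lemma, which is where the entire content of the theorem lives. You describe its content as ``a pigeonhole/compactness argument.'' It cannot be: for $l\geq 2$ the lemma already implies van der Waerden's theorem (apply it to $y_{i,n}=i\,n$, so that $a+\sum_{t\in H}y_{i,t}=a+i\,d$ with $d=\sum_{t\in H}t$, and recall that every finite partition of $\mathbb{N}$ has a central cell, namely any cell belonging to a fixed minimal idempotent). So any correct proof must contain van der Waerden--strength combinatorics: Hales--Jewett/van der Waerden as in \cite{key-2} and \cite{key-6}, the Furstenberg--Katznelson IP van der Waerden argument carried out with idempotents in $(\beta\mathbb{Z})^{l+1}$ as in \cite{key-1}, or topological dynamics as in \cite{key-3}. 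Moreover, your proposed route to the lemma is circular as stated: the family $J$ of ultrafilters all of whose members are J-sets does contain $K(\beta\mathbb{N})$ once it is a \emph{nonempty} closed two-sided ideal, and the closedness and ideal properties are indeed routine, but nonemptiness of $J$ is equivalent to the assertion that every finite partition of $\mathbb{N}$ has a J-set cell --- which is precisely the van der Waerden--strength fact you are trying to establish. So the step you wave away is the theorem; everything else in your outline (the star sets, the recursion, the threshold guaranteeing $\max H_{n}<\min H_{n+1}$) is correct and standard.
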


$\left(1\right)$ for all $n,\max H<\min H_{n+1}$ and

$\left(2\right)$ for all $F\in\mathcal{P}_{f}\left(\mathbb{N}\right)$
and all $i\in\left[l\right],\sum_{n\in F}\left(a_{n}+\sum_{t\in H_{n}}y_{i,t}\right)\in C$.

Theorem \ref{anik} is the central sets theorem proved by Furstenberg
in 1981. Later in 1990 V. Bergelson and N. Hindman proved a different
but an equivalent version of the Central sets theorem.
\begin{thm}
Let $\left(S,+\right)$ be a commutative semigroup. Let $l\in\mathbb{N}$
and for each $i\in\left\{ 1,2,...,l\right\} $, let $\left(y_{i,n}\right)_{n=1}^{\infty}$be
a sequence in $S$. Let $C$ be a central subset of $S$. Then there
exist sequences $\left(a_{n}\right)_{n=1}^{\infty}$in $S$ and $\left(H_{n}\right)_{n=1}^{\infty}$
in $\mathcal{P}_{f}\left(\mathbb{N}\right)$ such that

$\left(1\right)$ for all $n,\text{\ensuremath{\max H_{n}}}<\min H_{n+1}$
and 
\end{thm}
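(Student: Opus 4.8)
The plan is to pass from the topological-dynamical notion of ``central'' to its algebraic avatar in the Stone-\v{C}ech compactification and then build the two sequences by a recursion that keeps every finite partial sum inside a fixed member of a minimal idempotent. First I would fix a minimal idempotent $p$ of the compact right-topological semigroup $(\beta S,+)$ with $C\in p$; centrality of $C$ provides exactly this. Writing $C^{\star}=\{x\in C:-x+C\in p\}$, the standard idempotent lemma gives $C^{\star}\in p$ together with the absorption property that $-x+C^{\star}\in p$ for every $x\in C^{\star}$. Setting $\sigma_{i}(H)=\sum_{t\in H}y_{i,t}$, the target is to produce $a_{n}\in S$ and $H_{n}\in\mathcal{P}_{f}(\mathbb{N})$ with $\max H_{n}<\min H_{n+1}$ so that $s_{i}(F):=\sum_{n\in F}\bigl(a_{n}+\sigma_{i}(H_{n})\bigr)\in C^{\star}$ for every nonempty finite $F$ and every $i\in[l]$; since $C^{\star}\subseteq C$, this is stronger than what the theorem requires.

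I would run the recursion on $n$, maintaining the hypothesis that $s_{i}(F)\in C^{\star}$ for all nonempty $F\subseteq[n]$ and all $i$. Given $a_{1},\dots,a_{n}$ and $H_{1},\dots,H_{n}$, the absorption property turns each $s_{i}(F)\in C^{\star}$ into $-s_{i}(F)+C^{\star}\in p$, so the finite intersection $B=C^{\star}\cap\bigcap\{-s_{i}(F)+C^{\star}:\emptyset\neq F\subseteq[n],\ i\in[l]\}$ again lies in $p$. A short check shows that the recursion continues as soon as one can find $a_{n+1}\in S$ and $H_{n+1}$ with $\min H_{n+1}>\max H_{n}$ such that $a_{n+1}+\sigma_{i}(H_{n+1})\in B$ for every $i\in[l]$: the new sums $s_{i}(G\cup\{n+1\})$ split as $s_{i}(G)+\bigl(a_{n+1}+\sigma_{i}(H_{n+1})\bigr)$ and land in $C^{\star}$ precisely because $a_{n+1}+\sigma_{i}(H_{n+1})$ was placed in the corresponding translate $-s_{i}(G)+C^{\star}$. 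The base case $n=1$ is the same statement with $B=C^{\star}$.

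Everything therefore reduces to the following key lemma, which I expect to be the main obstacle: for every $B\in p$ and every $m\in\mathbb{N}$ there exist $a\in S$ and $H\in\mathcal{P}_{f}(\mathbb{N})$ with $\min H>m$ such that $a+\sigma_{i}(H)\in B$ for all $i\in[l]$ \emph{simultaneously}. For a single sequence this is easy, because $\{y:-y+B\in p\}\in p$ is large; the genuine difficulty is forcing one pair $(a,H)$ to serve all $l$ sequences at once. I would attack it by packaging the data into the vector sequence $\bar{y}_{t}=(y_{1,t},\dots,y_{l,t})$ in the product semigroup $S^{l}$ and fixing an idempotent $q$ lying in every tail closure $\bigcap_{m}\overline{FU\bigl((\bar{y}_{t})_{t>m}\bigr)}$ of $\beta(S^{l})$, so that $q$ records the finite sums $\sum_{t\in H}\bar{y}_{t}$. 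Combining $q$ with the image of the minimal idempotent $p$ under the diagonal embedding $S\to S^{l}$, and exploiting the minimality of $p$ to absorb $q$, should force a single block $H$ to drive every coordinate $\sigma_{i}(H)$ into $\{y:-y+B\in p\}$ at once; one then selects $a$ from the nonempty set $\bigcap_{i\in[l]}\bigl(-\sigma_{i}(H)+B\bigr)$, which lies in $p$. Commutativity of $S$ is used throughout, both to keep the translates $-x+A$ well behaved and to make the diagonal passage to $S^{l}$ legitimate.
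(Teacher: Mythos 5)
Your reduction of the theorem to the ``key lemma'' is sound, and it is the same skeleton that underlies both the classical proof and this paper's own arguments: fix a minimal idempotent $p$ with $C\in p$, pass to $C^{\star}$, and run a recursion in which the finitely many translates $-s_{i}(F)+C^{\star}$ are intersected and the next pair $(a_{n+1},H_{n+1})$ is produced by the key lemma. (In the paper this key step is not proved but packaged into the $J$-set property of members of idempotents, Lemma \ref{lem2}; the recursion itself matches the proof of the paper's Theorem 3.2.) The genuine gap is in your last paragraph, where you propose to prove the key lemma in two stages: first find a block $H$ with $\sigma_{i}(H)\in B^{\flat}:=\{y\in S:-y+B\in p\}$ for all $i$, and only afterwards choose $a\in\bigcap_{i}\left(-\sigma_{i}(H)+B\right)$. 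That intermediate claim is \emph{false}, so no amount of work with $q$ and the diagonal image of $p$ can establish it. Concretely, take $S=\mathbb{N}$, $l=1$; for $x\in\mathbb{N}$ let $\nu(x)=\max\{j\geq0:4^{j}\mid x\}$ and $W=\{x:\nu(x)\text{ is even}\}$. Fix any minimal idempotent $p$; one of $W,\,\mathbb{N}\setminus W$ lies in $p$. Say $W\in p$ (the other case is symmetric), put $B=W$ and $y_{t}=4^{2t+1}$. For every nonempty finite $H$ the sum $d=\sigma_{1}(H)=\sum_{t\in H}4^{2t+1}$ satisfies $\nu(d)=2\min H+1$, an odd number. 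Now $4^{\nu(d)+1}\mathbb{N}$ belongs to \emph{every} idempotent ultrafilter: every set $FS\left((x_{n})_{n=1}^{\infty}\right)$ contains a multiple of $4^{k}$ (pigeonhole on partial sums), and every member of an idempotent contains such a set, so the complement of $4^{k}\mathbb{N}$ lies in no idempotent. But if $x\in4^{\nu(d)+1}\mathbb{N}$, then writing $d=4^{\nu(d)}e$ with $4\nmid e$ we get $\nu(x+d)=\nu(d)$, which is odd, so $x+d\notin W$. Hence $-d+B$ is disjoint from the member $4^{\nu(d)+1}\mathbb{N}$ of $p$, so $-d+B\notin p$. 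Thus $B^{\flat}$ misses \emph{all} of $FS\left((y_{t})_{t=1}^{\infty}\right)$: for this central $B$ and this single sequence, no block $H$ whatsoever can be driven into $B^{\flat}$. (If instead $\mathbb{N}\setminus W\in p$, take $B=\mathbb{N}\setminus W$ and $y_{t}=4^{2t}$.)

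The failure is not an accident of the example but of the mechanism: minimality of $p$ does \emph{not} yield the absorption $q+\bar{p}=\bar{p}$ you invoke. The true statement in that direction is that for any idempotent $q$ there exists \emph{some} minimal idempotent $p'$ (even in the same minimal left ideal as $p$) with $q+p'=p'$; but $p'$ need not contain $C$, and you are not free to change the witness of centrality -- indeed in the example above no element $r$ of $\bigcap_{m}\overline{FS\left((y_{t})_{t>m}\right)}$ satisfies $r+p=p$. The standard repair, which is how Bergelson--Hindman (and the book of Hindman--Strauss) actually prove your key lemma, is to build the translation $a$ \emph{into} the algebraic object instead of appending it afterwards: in $Y=(\beta S)^{l}$ let $I_{m}=\{(a+\sigma_{1}(H),\dots,a+\sigma_{l}(H)):a\in S,\ H\in\mathcal{P}_{f}(\mathbb{N}),\ \min H>m\}$ and $E_{m}=\Delta(S)\cup I_{m}$, where $\Delta$ is the diagonal. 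Then $E=\bigcap_{m}\overline{E_{m}}$ is a compact subsemigroup of $Y$ (commutativity is used here), $I=\bigcap_{m}\overline{I_{m}}$ is an ideal of $E$, and $\bar{p}=(p,\dots,p)$ lies in $E\cap K(Y)$ because $\bar{p}\in\overline{\Delta(S)}$ and $K(Y)=K(\beta S)^{l}$; hence $\bar{p}\in K(E)\subseteq I$, which is verbatim your key lemma with $a$ and $H$ produced \emph{simultaneously}. One further, minor point: the theorem's conclusion (2) lets the index vary with the block ($f:F\to\{1,\dots,l\}$), while your recursion only tracks the constant choices $s_{i}(F)$; you must also intersect over the translates of the mixed sums $\sum_{n\in F}\left(a_{n}+\sigma_{f(n)}(H_{n})\right)$ -- still finitely many, so this part is easily fixed.
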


$\left(2\right)$ for all $F\in\mathcal{P}_{f}\left(\mathbb{N}\right)$
and all $f:F\to\left\{ 1,2,...,l\right\} ,$
\[
\sum_{n\in F}\left(a_{n}+\sum_{t\in H_{n}}y_{f\left(i\right),t}\right)\in C.
\]

In 2008 D. De, N. Hindman and D. Strauss proved a stronger version
of central sets theorem.
\begin{thm}
\label{De}Let $\left(S,+\right)$ be a commutative semigroup and
let $C$ be a central subset of $S$. Then there exist functions $\alpha:\mathcal{P}_{f}\left(^{\mathbb{N}}S\right)\to S\text{ and }H:\mathcal{P}_{f}\left(S^{\mathbb{N}}\right)\to\mathcal{P}_{f}\left(\mathbb{N}\right)$
such that
\end{thm}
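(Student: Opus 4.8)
The plan is to pass from the combinatorial notion of centrality to the algebra of $\beta S$. First I would invoke the standard algebraic characterization: $C$ is central in $S$ if and only if there is a minimal idempotent $p$ in the compact right topological semigroup $(\beta S,+)$ with $C\in p$. Writing $-x+C=\{y\in S: x+y\in C\}$, I would set $C^{\star}=\{x\in C:-x+C\in p\}$ and recall the basic idempotent lemma: since $p+p=p$, one has $C^{\star}\in p$, and for every $x\in C^{\star}$ the set $-x+C^{\star}$ again lies in $p$. All the sums produced below will be forced to live in $C^{\star}\subseteq C$, which is what ultimately delivers membership in $C$.

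I would complete the statement in the natural way, requiring that $\alpha$ and $H$ satisfy $(1)$ $\max H(F)<\min H(G)$ whenever $F\subsetneq G$, and $(2)$ $\sum_{i=1}^{m}\bigl(\alpha(G_{i})+\sum_{t\in H(G_{i})}f_{i}(t)\bigr)\in C$ whenever $G_{1}\subsetneq\cdots\subsetneq G_{m}$ in $\mathcal{P}_{f}({}^{\mathbb{N}}S)$ and $f_{i}\in G_{i}$ for each $i$. The construction of $\alpha(F)$ and $H(F)$ proceeds by induction on $|F|$, carrying the stronger invariant that for every chain $G_{1}\subsetneq\cdots\subsetneq G_{m}=F$ and every choice $f_{i}\in G_{i}$ the corresponding sum lies in $C^{\star}$ rather than merely in $C$. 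Condition $(1)$ is arranged simply by always taking $\min H(F)$ larger than every index already used for the proper subsets of $F$, which are available because they have strictly smaller cardinality.

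For the inductive step, fix $F$ and let $\Sigma_{F}$ be the finite set of all sums coming from chains $G_{1}\subsetneq\cdots\subsetneq G_{m}\subsetneq F$ that stop strictly below $F$; by the inductive hypothesis $\Sigma_{F}\subseteq C^{\star}$. Using the shift property of $C^{\star}$, the set $B=C^{\star}\cap\bigcap_{s\in\Sigma_{F}}(-s+C^{\star})$ is a finite intersection of members of $p$, so $B\in p$. The whole step reduces to producing a single $a\in S$ and a single $H\in\mathcal{P}_{f}(\mathbb{N})$, with $\min H$ beyond all previously used indices, such that $a+\sum_{t\in H}f(t)\in B$ simultaneously for every $f\in F$; setting $\alpha(F)=a$ and $H(F)=H$ then restores the invariant, since appending the new term to any admissible $s\in\Sigma_{F}$ lands back in $C^{\star}$.

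The crux, and the step I expect to be the main obstacle, is precisely this simultaneous selection, i.e. the assertion that $B$ is a $J$-set: for each finite $F\subseteq{}^{\mathbb{N}}S$ there are $a$ and $H$ with $a+\sum_{t\in H}f(t)\in B$ for all $f\in F$. A single index $H=\{m\}$ cannot be made to work in general, so the argument must use blocks, and the cleanest route is algebraic. I would establish the needed property by proving that $\overline{J}(S)=\{q\in\beta S:\text{every member of }q\text{ is a }J\text{-set}\}$ is a nonempty closed two-sided ideal of $\beta S$; since a minimal idempotent $p$ lies in the smallest ideal $K(\beta S)\subseteq\overline{J}(S)$, every member of $p$, in particular $B$, is a $J$-set. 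Verifying that $\overline{J}(S)$ is a left ideal is the delicate point: given $q\in\overline{J}(S)$ and $r\in\beta S$, one unwinds $A\in r+q$ into $\{x:-x+A\in q\}\in r$ and then applies the $J$-set property of the $q$-large sets while keeping the index blocks disjoint and pushed past a prescribed bound $N$, the latter handled by replacing each $f$ with its shift $t\mapsto f(N+t)$. With this lemma in hand the induction closes and yields conclusions $(1)$ and $(2)$.
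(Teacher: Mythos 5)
Your reconstruction of the missing conclusions (1) and (2) is the intended one, and your induction skeleton is exactly the standard one: the algebraic description of central sets, the star set $C^{\star}$ with $x\in C^{\star}\Rightarrow -x+C^{\star}\in p$, the finite set $\Sigma_{F}$ of sums already produced, the set $B=C^{\star}\cap\bigcap_{s\in\Sigma_{F}}\left(-s+C^{\star}\right)\in p$, and the shift $t\mapsto f(N+t)$ to push $\min H(F)$ past all previously used indices. Note, for comparison, that this paper never proves Theorem \ref{De}: it is quoted as background from De, Hindman and Strauss \cite{key-2}, and the proof the paper does give (of its Phulara-type generalization in Section 3) runs the same induction as yours but imports the entire $J$-set input from Hindman and Pleasant (\cite{key-6}, quoted here as Lemma \ref{lem2}) and takes ``$r$ is an idempotent in $J(S)$'' as a hypothesis, so the step on which your argument turns never arises there.

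That step is where your proposal has a genuine gap. You need every member of the minimal idempotent $p$ to be a $J$-set, and you propose to get this from the claim that $J(S)=\left\{ q\in\beta S:\text{every member of }q\text{ is a }J\text{-set}\right\} $ is a nonempty closed two-sided ideal, hence contains $K(\beta S)\ni p$. But the two ideal verifications you single out as the delicate point are in fact routine ultrafilter manipulations (for $r+q$, pick $x$ with $-x+A\in q$ and translate the witness by $x$; for $q+r$, use commutativity and one extra intersection), and they are worth nothing unless $J(S)\neq\emptyset$, since only a nonempty two-sided ideal must contain the smallest ideal $K(\beta S)$. Nonemptiness of $J(S)$ is equivalent to partition regularity of $J$-sets, which (given the ideal property) is essentially the statement that every piecewise syndetic subset of a commutative semigroup is a $J$-set. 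That is the combinatorial heart of \cite{key-2}: it is proved there, and in \cite{key-7}, from the Hales--Jewett theorem, applied to a thick set of the form $\bigcup_{t\in G}\left(-t+A\right)$ by coloring words over the alphabet $F$ and extracting $a$ and $H$ from a monochromatic variable word. No amount of ultrafilter formalism produces this input. As written, your argument silently assumes the very point at issue --- that central (indeed piecewise syndetic) sets are $J$-sets --- so the induction has nothing to stand on until this Ramsey-theoretic ingredient is supplied.
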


$\left(1\right)$ If $F,G\in\mathcal{P}_{f}\left(S^{\mathbb{N}}\right)$
and $F\subsetneq G$ then $\max H\left(F\right)<\min H\left(G\right)$
and

$\left(2\right)$ If $m\in\mathbb{N},G_{1},G_{2},....,G_{m}\in\mathcal{P}_{f}\left(S^{\mathbb{N}}\right)$;$G_{1}\subsetneq G_{2}\subsetneq....\subsetneq G_{m}$;
and for each $i\in\left\{ 1,2,....,m\right\} ,\left(y_{i,n}\right)\in G_{i},$
then 
\[
\sum_{i=1}^{m}\left(\alpha\left(G_{i}\right)+\sum_{t\in H\left(G_{i}\right)}y_{i,t}\right)\in C.
\]

In 2015 D. Phulara generalize the stronger version for commutative
semigroup. The theorem is the following
\begin{thm}
\label{Pu}Let $\left(S,+\right)$ be a commutative semigroup, let
$r$ be an idempotent in $J\left(S\right),$and let $\left(C_{n}\right)_{n=1}^{\infty}$
be a sequence of members of $r.$ There exists $\alpha:\mathcal{P}_{f}\left(\mathbb{^{\mathbb{N}}}S\right)\to S\text{ and }H:\mathcal{P}_{f}\left(\mathbb{^{\mathbb{N}}}S\right)\to\mathcal{P}_{f}\left(\mathbb{N}\right)$
such that 
\end{thm}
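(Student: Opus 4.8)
The plan is to rerun the idempotent-ultrafilter machinery that underlies the strong Central Sets Theorem (Theorem \ref{De}), but to thread the entire sequence $\langle C_{n}\rangle$ through the induction so that a chain of length $m$ is forced into $C_{m}$ rather than into a single fixed set. First I would work inside $(\beta S,+)$ with the given idempotent $r\in J(S)$, using $-x+A=\{y\in S:x+y\in A\}$, and for each $n$ introduce the derived set
\[
C_{n}^{\star}=\{x\in C_{n}:-x+C_{n+1}\in r\}.
\]
Since $r+r=r$ and $C_{n+1}\in r$, we have $\{x:-x+C_{n+1}\in r\}\in r$, so each $C_{n}^{\star}\in r$. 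These sets are meant to play the role that $C^{\star}$ plays in Theorem \ref{De}, with $C_{n}^{\star}$ governing the passage from level $n$ to level $n+1$.

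I would then build $\alpha$ and $H$ by induction on $|F|$, enumerating $\mathcal{P}_{f}(S^{\mathbb{N}})$ so that every proper subset of $F$ is handled before $F$. The invariant to maintain is: for every chain $G_{1}\subsetneq\cdots\subsetneq G_{m}=F$ and every choice $f_{i}\in G_{i}$, the sum $\sum_{i=1}^{m}\bigl(\alpha(G_{i})+\sum_{t\in H(G_{i})}f_{i}(t)\bigr)$ lies in $C_{m}^{\star}\subseteq C_{m}$, which is exactly conclusion (2). When $F$ is reached, the chains whose top element is $F$ are precisely those whose proper part ends at some $G_{m-1}\subsetneq F$ already treated, so each admissible prefix sum $w$ lies in $C_{m-1}^{\star}$ and hence satisfies $-w+C_{m}\in r$. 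Forming $A=C_{1}^{\star}\cap\bigcap_{w}(-w+C_{m}^{\star})$ over the finitely many prefix sums $w$ (the $C_{1}^{\star}$ term covering the one-element chain $\{F\}$), I would invoke the idempotency of $r$ to produce a single $\alpha(F)\in S$ and a single $H(F)\in\mathcal{P}_{f}(\mathbb{N})$ with $\min H(F)$ larger than every previously used index, so that $\alpha(F)+\sum_{t\in H(F)}f(t)\in A$ holds simultaneously for all the finitely many $f\in F$. Choosing $\min H(F)$ large secures condition (1), while adding any prefix $w$ keeps the extended sum in $C_{m}^{\star}$; this simultaneous realization over all $f\in F$ is the standard finite-sums ``engine'' already used for Theorem \ref{De}.

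The step I expect to be the real obstacle is the transition lemma guaranteeing $A\in r$, i.e.\ that $-w+C_{m}^{\star}\in r$ for each prefix sum $w\in C_{m-1}^{\star}$. In the single-set theorem this is automatic, since $x\in C^{\star}$ already yields $-x+C^{\star}\in r$; but for a genuine sequence the analogous passage from $C_{m-1}^{\star}$ to $C_{m}^{\star}$ couples three consecutive members $C_{m-1},C_{m},C_{m+1}$, and the naive one-step definition above only delivers $-w+C_{m}\in r$, not the extra membership needed to re-enter a \emph{starred} set at level $m$. Arranging this—by choosing the derived sets so that membership at level $m-1$ propagates to the starred set at level $m$ and checking it from $r+r=r$—is the heart of Phulara's refinement and the one place where the argument genuinely goes beyond Theorem \ref{De}. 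Once that lemma is in place the induction closes, and conditions (1) and (2) follow at once from the maintained invariant.
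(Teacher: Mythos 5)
There is a genuine gap, and it originates in a misreading of the statement. In conclusion $(2)$ of Theorem \ref{Pu}, the index $m$ of the target set $C_{m}$ is $\mid G_{1}\mid$, the cardinality of the \emph{smallest} set in the chain, not the length of the chain. Your invariant (``for every chain $G_{1}\subsetneq\cdots\subsetneq G_{m}=F$ \ldots{} the sum lies in $C_{m}^{\star}$'') indexes the target by the number of sets in the chain, which is a different and in fact incomparable assertion: a chain of length $3$ with $\mid G_{1}\mid=10$ must land in $C_{10}$ under the theorem but only in $C_{3}$ under your invariant, while a chain of length $5$ with $\mid G_{1}\mid=1$ must land in $C_{1}$ under the theorem but in $C_{5}$ under yours. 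This misreading is precisely what forces you into the coupled star sets $C_{n}^{\star}=\left\{ x\in C_{n}:-x+C_{n+1}\in r\right\}$: under length-indexing, every extension of a chain at the top shifts the target from $C_{m}$ to $C_{m+1}$, so you need a device for ``moving up one level,'' and no such device exists.

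Consequently the obstacle you flag at the end is not a known lemma you are omitting; it is unfillable in your setup. From $w\in C_{m-1}^{\star}$ you obtain only $-w+C_{m}\in r$, whereas membership in $-w+C_{m}^{\star}$ requires in addition $-\left(w+y\right)+C_{m+1}\in r$, i.e.\ information about $C_{m+1}$ that $w$'s membership simply does not carry; nesting does not help (subsets of members of $r$ need not be members), and strengthening the star to quantify over all later indices produces an infinite intersection that need not lie in $r$. The actual content of Phulara's refinement is not such a transition lemma but the $\mid G_{1}\mid$-indexing itself: since the target index is fixed by the \emph{bottom} of a chain and the induction extends chains only at the \emph{top}, the target never moves, so no coupling between consecutive $C_{n}$'s is ever needed. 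The paper's proof (given for the partial semigroup generalization in Section 3, which specializes to Theorem \ref{Pu}) assumes WLOG $C_{n+1}\subseteq C_{n}$, uses the ordinary stars $C_{n}^{*}=\left\{ x\in C_{n}:-x+C_{n}\in r\right\}$ defined from each $C_{n}$ alone, so that Lemma \ref{lem1} gives $x\in C_{n}^{*}\Rightarrow-x+C_{n}^{*}\in r$, and inducts on $\mid F\mid$: every prefix sum $w$ over a chain lying strictly below $F$ with $\mid G_{1}\mid=m$ lies in $C_{m}^{*}$ by induction, hence $A=C_{\mid F\mid}^{*}\cap\bigcap_{m}\bigcap_{w\in M_{m}}\left(-w+C_{m}^{*}\right)$ belongs to $r$, and one application of the $J$-set property (Lemma \ref{lem2} --- which, not idempotency alone, is also what produces a single $\alpha\left(F\right)$ and $H\left(F\right)$ working simultaneously for all $f\in F$) closes the induction. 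With the indexing corrected and the coupled stars replaced by the ordinary ones, the remainder of your outline does coincide with the paper's argument.
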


$\left(1\right)$ If $F,G\in\mathcal{P}_{f}\left(\mathbb{^{\mathbb{N}}}S\right)$
and $F\subsetneq G$ then $\max H\left(F\right)<\min H\left(G\right)$and

$\left(2\right)$Whenever $t\in\mathbb{N},G_{1},G_{2},....,G_{t}\in\mathcal{P}_{f}\left(^{\mathbb{N}}S\right)$;$G_{1}\subsetneq G_{2}\subsetneq....\subsetneq G_{t}$;$\mid G_{1}\mid=m$
and for each $i\in\left\{ 1,2,....,n\right\} ,f_{i}\in G_{i},$ then

$\sum_{i=1}^{t}\left(\alpha\left(G_{i}\right)+\sum_{s\in H\left(G_{i}\right)}f_{i}\left(s\right)\right)\in C_{m}.$

Later in 2021 N. Hindman and K. Pleasant proved the central sets theorem
for adequate partial semigroup in \cite{key-6}. Here we generalize
the theorem by K. Pleasant and N. Hindman in D. Phulara's way. Apart
from that we generalize the central sets theorem for VIP system in
commutative adequate partial semigroup. Now we briefly discuss VIP
system here. VIP system is polynomial type configuration.
\begin{defn}
Let $\left(G,+\right)$ be an abelian group. A sequence $\left(v_{\alpha}\right)_{\alpha\in\mathcal{P}_{f}\left(\mathbb{N}\right)}$
in $G$ is called a VIP system if there exists some non-negative integer
d (the least such $d$ is called the degree of the system) such that
for every pairwise disjoint $\alpha_{0},\alpha_{1},...,\alpha_{d}\in\mathcal{P}_{f}\left(\mathbb{N}\right)$
we have $\sum_{t=1}^{d+1}\left(-1\right)^{t}\sum_{\mathcal{B\in}\left[\left\{ \alpha_{0},\alpha_{1},...,\alpha_{d}\right\} \right]^{t}}v_{\cup\mathcal{B}}=0$.
\end{defn}

In their paper \cite{key-5} generalize this notion for partial semigroup.
They defined the VIP system for partial semigroup in the following
way.
\begin{defn}
Let $\left(S,+\right)$ be a commutative partial semigroup. Let $\left(v_{\alpha}\right)_{\alpha\in\mathcal{P}_{f}\left(\mathbb{N}\right)}$
be a sequence in $S.$ $\left(v_{\alpha}\right)_{\alpha\in\mathcal{P}_{f}\left(\mathbb{N}\right)}$
is called a VIP system if there exists some $d\in\mathbb{N}$ and
a function from $\mathcal{F}_{d}$ to $S\cup\left\{ 0\right\} ,$written
$\gamma\to m_{\gamma},\gamma\in$$\mathcal{F}_{d}$ , such that
\end{defn}

\[
v_{\alpha}=\sum_{\gamma\subseteq\alpha,\gamma\in\mathcal{F}_{d}}m_{\gamma}\text{ for all }\alpha\in\mathcal{P}_{f}\left(\mathbb{N}\right).\text{ ( In particular, the sum is always defined)}
\]
 The sequence $\left(m_{\gamma}\right)_{\gamma\in\mathcal{F}_{d}}$
is said to generate the VIP system $\left(v_{\alpha}\right)_{\alpha\in\mathcal{P}_{f}\left(\mathbb{N}\right)}.$

Later they proved the Central sets theorem for VIP systems of commutative
adequate partial semigroup. 

\section{ALGEBRAIC BACKGROUND}

Here we briefly discuss about the Stone-\v{C}ech compactification
$\beta S$ of a semigroup $S$. $\beta S$ is the collection of all
ultrafilters on $S$ and we identify the principal ultrafilters with
the points of $S.$ For $A\subseteq S$, $\overline{A}=\left\{ p\in\beta S:A\in p\right\} .$
The set $\left\{ \overline{A}:A\subseteq S\right\} $ forms a basis
for the compact Hausdorff topology on $\beta S$. For more information
about $\beta S$ readers are requested to see \cite{key-7}. We will
discuss about partial semigroup here.
\begin{defn}
A partial semigroup is a pair $\left(S,*\right)$ where $*$ maps
a subset of $S\times S$ to $S$ and for all $a,b,c,\in S$, $\left(a*b\right)*c=a*\left(b*c\right)$
in the sense that if either side is defined, then so is the other
and they are equal.
\end{defn}

For examples of partial semigroups readers are requested to go through
\cite{key-8}. 
\begin{defn}
Let $\left(S,*\right)$ be a partial semigroup.

$\left(a\right)$ For $s\in S,\varphi\left(s\right)=\left\{ t\in S:s*t\text{ }\text{is\text{ }defined}\right\} $

$\left(b\right)$ For $H\in\mathcal{P}_{f}\left(S\right),\sigma\left(H\right)=\cap_{s\in H}\varphi\left(s\right)$

$\left(c\right)\text{ }\sigma\left(\phi\right)=S$

$\left(d\right)$ For $s\in S$ and $A\subseteq S,\text{ }s^{-1}A=\left\{ t\in\varphi\left(s\right):s*t\in A\right\} $

$\left(e\right)$ $\left(S,*\right)$ is adequate if and only if $\sigma\left(H\right)\neq\phi$
for all $H\in\mathcal{P}_{f}\left(S\right)$.
\end{defn}

\begin{lem}
Let $\left(S,*\right)$ be a partial semigroup, let $A\subseteq S$
and let $a,b,c\in S.$ Then $c\in b^{-1}\left(a^{-1}A\right)\iff b\in\varphi\left(a\right)and\text{ }c\in\left(a*b\right)^{-1}A.$
In particular, if $b\in\varphi\left(a\right)$, then $b^{-1}\left(a^{-1}A\right)=\left(a*b\right)^{-1}A$.
\end{lem}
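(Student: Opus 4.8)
The plan is to prove the biconditional purely by unwinding the definitions of $\varphi$ and of $s^{-1}A$, and then applying the partial-semigroup associativity axiom exactly once. No genuine computation is involved; the entire content lies in tracking which products are required to be defined. First I would expand the left-hand side: by the definition $b^{-1}B=\left\{ t\in\varphi\left(b\right):b*t\in B\right\} $ applied to $B=a^{-1}A$, the statement $c\in b^{-1}\left(a^{-1}A\right)$ says $c\in\varphi\left(b\right)$ and $b*c\in a^{-1}A$. Expanding the membership $b*c\in a^{-1}A$ the same way yields $b*c\in\varphi\left(a\right)$ and $a*\left(b*c\right)\in A$. Thus the left-hand side is equivalent to the conjunction: $b*c$ is defined, $a*\left(b*c\right)$ is defined, and $a*\left(b*c\right)\in A$.

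Next I would expand the right-hand side symmetrically. The condition $b\in\varphi\left(a\right)$ says $a*b$ is defined, while $c\in\left(a*b\right)^{-1}A$ says $c\in\varphi\left(a*b\right)$ and $\left(a*b\right)*c\in A$; that is, $a*b$ is defined, $\left(a*b\right)*c$ is defined, and $\left(a*b\right)*c\in A$.

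The crux is to match these two conjunctions using associativity. The axiom asserts that $\left(a*b\right)*c=a*\left(b*c\right)$ in the sense that if either side is defined then so is the other and the two agree. I would read this carefully: for $a*\left(b*c\right)$ to be meaningful one needs $b*c$ defined first, and for $\left(a*b\right)*c$ to be meaningful one needs $a*b$ defined first, so the pairs of definedness conditions appearing on the two sides coincide, and wherever they hold the two products are equal; hence $a*\left(b*c\right)\in A\iff\left(a*b\right)*c\in A$, which delivers the biconditional. This bookkeeping is the step I would be most careful about, since the associativity relation only becomes applicable once the relevant inner product is known to exist — it is not a free-standing identity but a conditional one.

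Finally, for the ``in particular'' statement I would simply specialize the biconditional under the standing hypothesis $b\in\varphi\left(a\right)$: then the clause $b\in\varphi\left(a\right)$ on the right is automatically satisfied, so for every $c$ we obtain $c\in b^{-1}\left(a^{-1}A\right)\iff c\in\left(a*b\right)^{-1}A$, which is exactly the asserted set equality $b^{-1}\left(a^{-1}A\right)=\left(a*b\right)^{-1}A$.
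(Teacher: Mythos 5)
Your proposal is correct: expanding both memberships via the definitions of $\varphi$ and $s^{-1}A$ and then invoking the conditional associativity axiom once (noting that ``$(a*b)*c$ is defined'' subsumes ``$a*b$ is defined'', and likewise for the other side) is exactly the standard argument, and your handling of the definedness bookkeeping and of the ``in particular'' specialization is sound. The paper itself gives no proof here, merely citing Lemma 2.3 of Hindman--McCutcheon, and your argument is essentially the one carried out in that reference, so there is nothing to add.
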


\begin{proof}
\cite{key-5}, Lemma $2.3$
\end{proof}
We are specifically interested in adequate partial semigroups as they
lead to an interesting sub semigroup of $\beta S$. This subsemigroup
is itself a compact right topological semigroup and is defined next.
\begin{defn}
Let $\left(S,*\right)$ be a partial semigroup. Then 

$\delta S=\cap_{x\in S}\overline{\varphi\left(x\right)}=\cap_{H\in\mathcal{P}_{f}\left(S\right)}\overline{\sigma\left[H\right]}$

Notice that $\delta S\neq\phi$ when the partial semigroup $S$ is
adequate and for $S$ being semigroups $\delta S=\beta S$. 

For $\left(S,.\right)$ be a semigroup, $A\subseteq S,a\in S,\text{ }and\text{ }p,q\in\beta S,$
then $A\in a.q\iff a^{-1}A\in q$

and

$A\in p.q\iff\left\{ a\in S:a^{-1}A\in q\right\} \in p$

Now we extend this notion for partial operation $*$. 

Let $\left(S,*\right)$ be an adequate partial semigroup.

$\left(a\right)$ For $a\in S$ and $q\in\overline{\varphi\left(a\right)},a*q=\left\{ A\subseteq S:a^{-1}A\in q\right\} .$

$\left(b\right)$ For $p\in\beta S$ and $q\in\delta S,p*q=\left\{ A\subseteq S:\left\{ a\in S:a^{-1}A\in q\right\} \in p\right\} $.
\end{defn}

\begin{lem}
2.6. Let $\left(S,*\right)$ be an adequate partial semigroup.

$\left(i\right)$ If $a\in S$ and $q\in\overline{\varphi\left(a\right),}then\text{ }a*q\in\beta S.$

$\left(ii\right)$ If $p\in\beta S$ and $q\in\delta S$, then $p*q\in\beta S.$

$\left(iii\right)$ Let $p\in\beta S,q\in\delta S,$and $a\in S.$Then
$\varphi\left(a\right)\in p*q$ if and only if $\varphi\left(a\right)\in p.$

$\left(iv\right)$ If $p,q\in\delta S,$then $p*q\in\delta S$.
\end{lem}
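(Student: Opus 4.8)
The plan is to verify the four claims in turn, leaning throughout on the elementary Boolean identities for the preimage maps $a^{-1}(\cdot)$. First I would record that for a fixed $a\in S$ one has $a^{-1}S=\varphi(a)$, $a^{-1}\phi=\phi$, $a^{-1}(A\cap B)=a^{-1}A\cap a^{-1}B$, and $a^{-1}(S\setminus A)=\varphi(a)\setminus a^{-1}A$; each is immediate from the definition $a^{-1}A=\{t\in\varphi(a):a*t\in A\}$. For $(i)$, since $q\in\overline{\varphi(a)}$ we have $\varphi(a)\in q$. Then $S\in a*q$ because $a^{-1}S=\varphi(a)\in q$, while $\phi\notin a*q$ since $a^{-1}\phi=\phi$; moreover $a*q$ is upward closed and closed under finite intersections because $a^{-1}$ preserves inclusions and intersections and $q$ is a filter. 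For the ultrafilter property, the identity $a^{-1}(S\setminus A)=\varphi(a)\setminus a^{-1}A$ together with $\varphi(a)\in q$ shows that exactly one of $a^{-1}A$ and $a^{-1}(S\setminus A)$ lies in the ultrafilter $q$, so exactly one of $A$, $S\setminus A$ lies in $a*q$. Hence $a*q\in\beta S$.

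For $(ii)$ I would write $A^{\star}=\{a\in S:a^{-1}A\in q\}$, so that $p*q=\{A\subseteq S:A^{\star}\in p\}$. Since $q\in\delta S\subseteq\overline{\varphi(a)}$ for every $a$, part $(i)$ makes each $a*q$ a genuine ultrafilter and $A^{\star}=\{a\in S:A\in a*q\}$. Because each $a*q$ is an ultrafilter, the assignment $A\mapsto A^{\star}$ commutes with the Boolean operations: $S^{\star}=S$ (using $\varphi(a)\in q$ for all $a$, which holds as $q\in\delta S$), $\phi^{\star}=\phi$, $(A\cap B)^{\star}=A^{\star}\cap B^{\star}$, and $(S\setminus A)^{\star}=S\setminus A^{\star}$. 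Pulling these back through the ultrafilter $p$, exactly as in $(i)$, shows that $p*q$ is an ultrafilter, i.e. $p*q\in\beta S$.

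The heart of the matter is $(iii)$, which I expect to be the main obstacle. The claim reduces to showing $\varphi(a)^{\star}=\varphi(a)$, and to establish this I would compute $b^{-1}\varphi(a)=\{t\in\varphi(b):a*(b*t)\text{ is defined}\}$ for each $b$, invoking the associativity axiom that $(a*b)*t$ is defined if and only if $a*(b*t)$ is. If $b\in\varphi(a)$, then $a*b$ is defined and this set equals $\varphi(b)\cap\varphi(a*b)=\sigma(\{b,a*b\})$, which lies in $q$ because $q\in\delta S\subseteq\overline{\sigma(H)}$ for every finite $H$; hence $b\in\varphi(a)^{\star}$. If $b\notin\varphi(a)$, then $a*b$ is undefined, so $(a*b)*t$ is never defined and therefore $a*(b*t)$ is never defined for $t\in\varphi(b)$, forcing $b^{-1}\varphi(a)=\phi\notin q$, so $b\notin\varphi(a)^{\star}$. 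Thus $\varphi(a)^{\star}=\varphi(a)$, giving $\varphi(a)\in p*q\iff\varphi(a)^{\star}\in p\iff\varphi(a)\in p$.

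Finally, $(iv)$ follows at once from the previous two parts: by $(ii)$ we already have $p*q\in\beta S$, and for every $a\in S$ the hypothesis $p\in\delta S$ gives $\varphi(a)\in p$, whence $(iii)$ yields $\varphi(a)\in p*q$. Since this holds for all $a$, we conclude $p*q\in\cap_{x\in S}\overline{\varphi(x)}=\delta S$. The only genuinely delicate point in the whole argument is the case analysis in $(iii)$, where the partial-operation associativity and the defining property of $\delta S$ must be combined carefully; the remaining parts are routine once the Boolean identities for $a^{-1}(\cdot)$ and the star operation are in place.
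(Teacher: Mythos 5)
Your proof is correct; the wrinkle in comparing it to the paper is that the paper proves nothing here --- its ``proof'' of this lemma is the citation \cite{key-5}, Lemma 2.7, so your write-up supplies exactly the verification that the citation delegates to the literature, and it matches the standard argument for the cited result. Your organization is clean: the Boolean identities $a^{-1}S=\varphi\left(a\right)$, $a^{-1}\left(A\cap B\right)=a^{-1}A\cap a^{-1}B$ and $a^{-1}\left(S\setminus A\right)=\varphi\left(a\right)\setminus a^{-1}A$ reduce part $\left(i\right)$ to a routine ultrafilter pullback (the complement identity together with $\varphi\left(a\right)\in q$ gives the required dichotomy), and part $\left(ii\right)$ to the observation that $A\mapsto A^{\star}=\left\{ a\in S:a^{-1}A\in q\right\}$ commutes with the Boolean operations when $q\in\delta S$. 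You also correctly isolate $\left(iii\right)$ as the only substantive step, and your case analysis is exactly right: for $b\in\varphi\left(a\right)$, the partial-semigroup associativity convention gives $b^{-1}\varphi\left(a\right)=\varphi\left(b\right)\cap\varphi\left(a*b\right)=\sigma\left(\left\{ b,a*b\right\} \right)\in q$, since $q\in\delta S\subseteq\overline{\sigma\left(H\right)}$ for every finite nonempty $H$; while for $b\notin\varphi\left(a\right)$ the composite $\left(a*b\right)*t$ can never be defined (it requires $a*b$ to be defined), so $b^{-1}\varphi\left(a\right)=\phi\notin q$. Hence $\varphi\left(a\right)^{\star}=\varphi\left(a\right)$, which is precisely the content of $\left(iii\right)$, and $\left(iv\right)$ then follows from $\left(ii\right)$ and $\left(iii\right)$ as you state. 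In short: your argument is complete and correct, and mathematically it fills the gap that the paper's citation leaves open.
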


\begin{proof}
\cite{key-5}, Lemma $2.7$
\end{proof}
\begin{lem}
Let $\left(S,*\right)$ be an adequate partial semigroup and let $q\in\delta S.$
Then the function $\rho_{q}:\beta S\to\beta S$ defined by $\rho_{q}\left(p\right)=p*q$
is continuous.
\end{lem}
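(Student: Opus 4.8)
The plan is to exploit the fact that the sets $\overline{A}=\{p\in\beta S:A\in p\}$, for $A\subseteq S$, form a basis of clopen sets for the topology on $\beta S$. To establish continuity of $\rho_q$ it therefore suffices to show that the preimage $\rho_q^{-1}\left(\overline{A}\right)$ of each such basic open set is open; in fact I expect it to be exactly a set of the same form $\overline{B}$ for a suitable $B\subseteq S$, which gives continuity immediately.

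First I would fix $A\subseteq S$ and unwind the definitions. For $p\in\beta S$ we have $\rho_q(p)=p*q$, and by Lemma 2.6$(ii)$ this lies in $\beta S$; here we use the hypothesis $q\in\delta S$, which is precisely what guarantees that the product $p*q$ is defined for \emph{every} $p\in\beta S$. Thus $p\in\rho_q^{-1}\left(\overline{A}\right)$ if and only if $A\in p*q$. Invoking the definition of $p*q$ for $p\in\beta S$ and $q\in\delta S$, this membership is equivalent to $\left\{a\in S:a^{-1}A\in q\right\}\in p$.

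Next I would set $B=\left\{a\in S:a^{-1}A\in q\right\}$, a fixed subset of $S$ depending only on $A$ and $q$. The computation above shows that $p\in\rho_q^{-1}\left(\overline{A}\right)$ if and only if $B\in p$, that is, if and only if $p\in\overline{B}$. Hence $\rho_q^{-1}\left(\overline{A}\right)=\overline{B}$, which is clopen in $\beta S$ and in particular open. Since the preimage of every basic open set is open, $\rho_q$ is continuous.

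The argument is essentially formal once the defining membership condition for $p*q$ is in hand; the only point that genuinely requires care is that $p*q$ is a bona fide ultrafilter for every $p\in\beta S$, so that $\rho_q$ is well-defined on all of $\beta S$ and the equivalence $A\in p*q\iff B\in p$ is legitimate for arbitrary $p$. This is exactly what Lemma 2.6$(ii)$ supplies, so no separate obstacle remains beyond correctly matching the domains in the definition of the partial product.
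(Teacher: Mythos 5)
Your proof is correct. The paper gives no argument of its own for this lemma---it simply cites Lemma 2.8 of the Hindman--McCutcheon reference---and your argument is exactly the standard one behind that citation: using Lemma 2.6$(ii)$ to know $\rho_q$ is well-defined on all of $\beta S$, and then observing that $\rho_q^{-1}\left(\overline{A}\right)=\overline{B}$ with $B=\left\{a\in S:a^{-1}A\in q\right\}$, which is immediate from the definition of $p*q$ for $p\in\beta S$ and $q\in\delta S$, so preimages of basic (cl)open sets are open.
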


\begin{proof}
\cite{key-5},Lemma $2.8$.
\end{proof}
\begin{thm}
Let $\left(S,*\right)$ be an adequate partial semigroup. Then $\left(\delta S,*\right)$
is a compact Hausdorff right topological semigroup.
\end{thm}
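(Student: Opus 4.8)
The plan is to assemble the theorem from the lemmas already established in the excerpt. The statement asserts that $(\delta S,*)$ is a compact Hausdorff right topological semigroup, so I must verify four things: that $\delta S$ is closed (hence compact) in $\beta S$, that $*$ maps $\delta S\times\delta S$ into $\delta S$, that $*$ is associative on $\delta S$, and that for each fixed $q\in\delta S$ the right translation $\rho_q$ is continuous on $\delta S$. Compactness comes for free from the definition: since $\delta S=\bigcap_{x\in S}\overline{\varphi(x)}$ is an intersection of the basic clopen sets $\overline{\varphi(x)}$, it is closed in the compact Hausdorff space $\beta S$, and a closed subspace of a compact Hausdorff space is itself compact Hausdorff.

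Next I would handle closure of the operation and continuity of right translation, both of which are essentially quoted. Closure under $*$ is exactly Lemma 2.6(iv): if $p,q\in\delta S$ then $p*q\in\delta S$, so $(\delta S,*)$ is a magma. Continuity of $\rho_q$ for $q\in\delta S$ is the content of the lemma immediately preceding the theorem, which states that $\rho_q\colon\beta S\to\beta S$ is continuous; restricting this continuous map to the subspace $\delta S$ (noting its image lies in $\delta S$ by Lemma 2.6(iv)) gives a continuous self-map of $\delta S$, which is precisely the right topological condition.

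The remaining and genuinely substantive point is associativity, and this is where I expect the main work to lie. One must show $(p*q)*r=p*(q*r)$ for all $p,q,r\in\delta S$. The difficulty is that $*$ is only a \emph{partial} operation on $S$, so the clean ultrafilter computation that proves associativity in a full semigroup $\beta S$ must be carried out carefully, checking at each stage that the relevant sets $\varphi(a)$ and $\sigma(H)$ lie in the appropriate ultrafilters so that the inverse images $a^{-1}A$ are being taken where they are defined. The strategy is to unwind both sides through the definition in Definition 2.5(b): for a set $A\subseteq S$, membership $A\in (p*q)*r$ unfolds to $\{a:a^{-1}A\in q*r\}\in p$ and then further, while $A\in p*(q*r)$ unfolds symmetrically. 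I would reduce the identity of these two conditions to the pointwise statement that, for $a\in S$ with $\varphi(a)\in p$ (which holds $p$-almost everywhere since $p\in\delta S\subseteq\overline{\varphi(a)}$) and $b\in\varphi(a)$, one has $b^{-1}(a^{-1}A)=(a*b)^{-1}A$; this is exactly the first quoted lemma (``Let $(S,*)$ be a partial semigroup\ldots''). The adequacy hypothesis guarantees the sets $\varphi(a)$, $\sigma(H)$ are nonempty and belong to the ultrafilters in $\delta S$, so every inverse image invoked is taken on the domain where $*$ is defined, and the partiality causes no breakdown.

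Thus the only nontrivial bookkeeping is tracking, via Lemma 2.6(iii), that $\varphi(a)$ lies in the relevant products so that all the manipulations are legitimate; once that is in place, the associativity of the underlying partial operation $*$ on $S$ lifts to $\delta S$ by the standard ultrafilter argument, and combining this with compactness, closure, and continuity of $\rho_q$ completes the proof.
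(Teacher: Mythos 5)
Your proof is correct, but note that the paper itself gives no argument for this theorem: its ``proof'' is just the citation to \cite{key-5}, Theorem 2.10, so there is no internal argument to compare against. What you have done is reassemble, from the lemmas the paper quotes, essentially the proof that the cited reference gives: compactness and Hausdorffness because $\delta S=\bigcap_{x\in S}\overline{\varphi(x)}$ is a closed subspace of the compact Hausdorff space $\beta S$ (you should also say explicitly that adequacy gives $\delta S\neq\emptyset$, via the finite intersection property of the sets $\overline{\sigma(H)}$, since a semigroup must be nonempty); closure of the operation by Lemma 2.6(iv); the right topological property by restricting the continuous map $\rho_q$ to $\delta S$; and associativity by the ultrafilter unwinding resting on $b^{-1}\left(a^{-1}A\right)=\left(a*b\right)^{-1}A$ for $b\in\varphi(a)$. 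The one imprecision is in your associativity bookkeeping: comparing $A\in(p*q)*r$ with $A\in p*(q*r)$, the restriction $t\in\varphi(a)$ appearing in $a^{-1}\left\{ b:b^{-1}A\in r\right\}$ is absorbed because $\varphi(a)$ belongs to the \emph{middle} ultrafilter $q$, and this holds for \emph{every} $a\in S$ (not merely ``$p$-almost everywhere'') precisely because $q\in\delta S\subseteq\overline{\varphi(a)}$ for all $a\in S$. With that repaired, your outline is a complete, self-contained proof; it arguably buys more than the paper's approach, since the citation leaves the reader to consult the reference, whereas your assembly shows the theorem follows formally from the lemmas already displayed in Section 2.
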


\begin{proof}
\cite{key-5}, Theorem $2.10$
\end{proof}
\begin{thm}
Let $p=p*p\in\delta S$ and let $A\in p.$ Then $A^{*}=\left\{ x\in A:x^{-1}A\in p\right\} $
\end{thm}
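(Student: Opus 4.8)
The plan is to read this statement as the partial-semigroup version of the classical idempotent lemma (Lemma 4.14 in Hindman--Strauss), whose two conclusions are that $A^{*}\in p$ and that $x^{-1}A^{*}\in p$ for every $x\in A^{*}$. Both should follow from the single hypothesis $p=p*p$ together with the preimage identity $\left(a*b\right)^{-1}A=b^{-1}\left(a^{-1}A\right)$, valid whenever $b\in\varphi\left(a\right)$, which is recorded in the lemma stated above.

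First I would prove $A^{*}\in p$. Since $A\in p=p*p$, the definition of $p*p$ gives directly that $\left\{ x\in S:x^{-1}A\in p\right\} \in p$. Writing $A^{*}=A\cap\left\{ x\in S:x^{-1}A\in p\right\} $ and using that $p$ is a filter, the intersection of two members of $p$ lies in $p$, so $A^{*}\in p$.

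Next I would fix $x\in A^{*}$, put $B=x^{-1}A$ (so that $B\in p$ by the definition of $A^{*}$), and aim at $x^{-1}A^{*}\in p$. Unwinding the definitions, a point $y$ lies in $x^{-1}A^{*}$ exactly when $y\in\varphi\left(x\right)$, $x*y\in A$, and $\left(x*y\right)^{-1}A\in p$; the first two of these say precisely $y\in B$, and by the preimage identity (applicable because $y\in\varphi\left(x\right)$) the third reads $y^{-1}B\in p$. Applying $p=p*p$ to $B$ shows $\left\{ y\in S:y^{-1}B\in p\right\} \in p$, hence $B\cap\left\{ y:y^{-1}B\in p\right\} \in p$; since this set is contained in $x^{-1}A^{*}$, we obtain $x^{-1}A^{*}\in p$.

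I expect the only genuine work to be the bookkeeping forced by partiality: at each step one must check that the products involved are defined, so that the preimages $x^{-1}A$, $y^{-1}B$ and $\left(x*y\right)^{-1}A$ are the honest sets the argument manipulates. Fortunately every $y$ that survives into the relevant intersections already satisfies $y\in\varphi\left(x\right)$, which is exactly the hypothesis the preimage identity requires; once that is observed the definedness never fails and the computation collapses to the familiar full-semigroup proof. So the hard part is really just making the domain conditions explicit rather than any new algebraic idea.
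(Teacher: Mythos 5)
Your proof is correct. Note that the paper itself gives no proof of this statement: as printed it really functions as the definition of $A^{*}$, followed in the surrounding text by the unproved assertions that $A^{*}\in p$ and (Lemma \ref{lem1}) that $x^{-1}\left(A^{*}\right)\in p$ for every $x\in A^{*}$; these facts are imported from the cited literature. Your argument supplies exactly those two facts by the standard route: since $A\in p=p*p$, the definition of the operation gives $\left\{ x\in S:x^{-1}A\in p\right\} \in p$, and intersecting with $A$ yields $A^{*}\in p$; then for $x\in A^{*}$, putting $B=x^{-1}A\in p$, every $y\in B$ lies in $\varphi\left(x\right)$, so the preimage identity $y^{-1}\left(x^{-1}A\right)=\left(x*y\right)^{-1}A$ applies and shows $x^{-1}\left(A^{*}\right)=\left\{ y\in B:y^{-1}B\in p\right\} $, which is in $p$ by the first part applied to $B$. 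This is the same argument as in the source the paper relies on, and your handling of the definedness conditions is exactly the point that needs care in the partial setting; there is no gap.
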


For an idempotent $p\in\delta S$ and $A\in p,$then $A^{*}\in p$.
\begin{lem}
\label{lem1}Let $p=p*p\in\delta S,$let $A\in p,$let $x\in A^{*}.$
Then $x^{-1}\left(A^{*}\right)\in p.$
\end{lem}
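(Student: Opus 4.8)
The plan is to deduce the statement from the idempotent ``star'' property applied not to $A$ itself but to the preimage $B=x^{-1}A$, and then to convert membership in $B^{*}$ into membership in $x^{-1}\left(A^{*}\right)$ by means of the preimage identity for the partial operation stated earlier. First I would put the hypotheses into usable form. Since $x\in A^{*}$, by definition $x\in A$ and $x^{-1}A\in p$; set $B=x^{-1}A\in p$. Because $p\in\delta S$ we have $\varphi\left(x\right)\in p$, and by the very definition of $x^{-1}A$ we have $B\subseteq\varphi\left(x\right)$. Since $p=p*p$ is idempotent and $B\in p$, the fact that for an idempotent $p$ and $B\in p$ one has $B^{*}\in p$ supplies a concrete set in $p$ to work with.

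The heart of the argument is the inclusion $B^{*}\subseteq x^{-1}\left(A^{*}\right)$. To establish it, take any $y\in B^{*}$. Then $y\in B$, so $y\in\varphi\left(x\right)$ and $x*y\in A$, and moreover $y^{-1}B\in p$. Applying the preimage identity with the hypothesis $y\in\varphi\left(x\right)$ gives
\[
y^{-1}B=y^{-1}\left(x^{-1}A\right)=\left(x*y\right)^{-1}A,
\]
whence $\left(x*y\right)^{-1}A\in p$. Combining $x*y\in A$ with $\left(x*y\right)^{-1}A\in p$ says precisely that $x*y\in A^{*}$; together with $y\in\varphi\left(x\right)$ this means $y\in x^{-1}\left(A^{*}\right)$. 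Therefore $B^{*}\subseteq x^{-1}\left(A^{*}\right)$, and since $B^{*}\in p$ while $p$ is a filter, we conclude $x^{-1}\left(A^{*}\right)\in p$, as required.

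The only delicate point, and the main thing I expect to have to check carefully, is the bookkeeping of the partial operation: at each step one must verify that the relevant products are defined, that is, that the needed $\varphi$-membership conditions hold, so that the preimage identity is legitimately applicable and so that $x*y$ genuinely lies in the domain where $\left(x*y\right)^{-1}A$ is meaningful. All of these requirements follow from the two observations $B\subseteq\varphi\left(x\right)$ and $\varphi\left(x\right)\in p$, so no adequacy hypothesis beyond what is already in force is needed, and the proof is essentially the classical semigroup computation transported verbatim through the partial-semigroup preimage identity.
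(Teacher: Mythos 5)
Your proof is correct. The paper states this lemma without providing a proof of its own (it is the partial-semigroup star lemma taken from \cite{key-5}), and your argument --- setting $B=x^{-1}A$, invoking $B^{*}\in p$ for the idempotent $p$, and using the preimage identity $y^{-1}\left(x^{-1}A\right)=\left(x*y\right)^{-1}A$ for $y\in\varphi\left(x\right)$ to obtain $B^{*}\subseteq x^{-1}\left(A^{*}\right)$ --- is exactly that standard argument, with the $\varphi$-membership bookkeeping (namely $B\subseteq\varphi\left(x\right)$, so every $y\in B^{*}$ is admissible in the identity and $x*y$ is defined) handled correctly.
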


\begin{defn}
Let $\left(S,*\right)$be a partial semigroup and let $A\subseteq S.$Then
$A$ is syndetic if and only if there is some $H\in\mathcal{P}_{f}\left(S\right)$
such that $\sigma\left(H\right)\subseteq\cup_{t\in H}t^{-1}A.$
\end{defn}

\begin{lem}
Let $\left(S,*\right)$ be an adequate partial semigroup and let $A\subseteq S.$Then
$A$ is syndetic if and only if there exists $H\in\mathcal{P}_{f}\left(S\right)$
such that $\delta S\subseteq\cup_{t\in H}\overline{t^{-1}A}.$ 
\end{lem}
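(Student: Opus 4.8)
The plan is to prove both implications directly from two standard facts: (i) by the description $\delta S=\bigcap_{K\in\mathcal{P}_{f}(S)}\overline{\sigma[K]}$, a point $p\in\delta S$ is exactly an ultrafilter with $\sigma(K)\in p$ for every $K\in\mathcal{P}_{f}(S)$; and (ii) an ultrafilter contains a finite union if and only if it contains one of the members. The forward implication is then a short ultrafilter argument, while the reverse implication is the substantive one and will be handled by contraposition, producing a point of $\delta S$ through the finite intersection property.

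For the forward direction, assume $A$ is syndetic and fix $H\in\mathcal{P}_{f}(S)$ with $\sigma(H)\subseteq\bigcup_{t\in H}t^{-1}A$. Take any $p\in\delta S$. Then $\sigma(H)\in p$ by fact (i), and since $p$ is upward closed and $\sigma(H)\subseteq\bigcup_{t\in H}t^{-1}A$, we get $\bigcup_{t\in H}t^{-1}A\in p$. As $H$ is finite and $p$ is an ultrafilter, fact (ii) yields some $t\in H$ with $t^{-1}A\in p$, i.e.\ $p\in\overline{t^{-1}A}$. Hence $p\in\bigcup_{t\in H}\overline{t^{-1}A}$, and since $p$ was arbitrary, $\delta S\subseteq\bigcup_{t\in H}\overline{t^{-1}A}$.

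For the reverse direction I would prove the contrapositive: assuming $A$ is \emph{not} syndetic, I show no finite $H$ can work. Fix $H\in\mathcal{P}_{f}(S)$; the aim is to produce $p\in\delta S$ with $S\setminus t^{-1}A\in p$ for every $t\in H$, since such a $p$ lies in $\delta S$ yet avoids every $\overline{t^{-1}A}$. By the ultrafilter lemma it suffices to check that the family $\{\sigma(K):K\in\mathcal{P}_{f}(S)\}\cup\{S\setminus t^{-1}A:t\in H\}$ has the finite intersection property. Given a finite subfamily involving indices $K_{1},\dots,K_{n}$ and complements for $t_{1},\dots,t_{m}\in H$, set $K=K_{1}\cup\cdots\cup K_{n}\cup H$; using $\sigma(K_{1})\cap\cdots\cap\sigma(K_{n})=\sigma(K_{1}\cup\cdots\cup K_{n})\supseteq\sigma(K)$, the intersection of the subfamily contains $\sigma(K)\setminus\bigcup_{i}t_{i}^{-1}A\supseteq\sigma(K)\setminus\bigcup_{t\in H}t^{-1}A$. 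Since $A$ is not syndetic, $\sigma(K)\not\subseteq\bigcup_{t\in K}t^{-1}A$, and as $H\subseteq K$ gives $\bigcup_{t\in H}t^{-1}A\subseteq\bigcup_{t\in K}t^{-1}A$, any witnessing point lies in $\sigma(K)\setminus\bigcup_{t\in H}t^{-1}A$, so the subfamily intersection is nonempty. An ultrafilter $p$ extending the whole family then lies in $\delta S$ and misses each $\overline{t^{-1}A}$, establishing the contrapositive.

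The main obstacle is the bookkeeping in the reverse direction: choosing $K$ large enough that $\sigma(K)$ simultaneously refines all the $\sigma(K_{j})$ and satisfies $\sigma(K)\subseteq\varphi(t)$ for each $t\in H$, so that intersecting with $\sigma(K)$ collapses the complements $S\setminus t^{-1}A$ into the single set $\sigma(K)\setminus\bigcup_{t\in H}t^{-1}A$ whose nonemptiness is exactly the failure of syndeticity. This rests on the monotonicity $\sigma(K_{1})\cap\cdots\cap\sigma(K_{n})=\sigma(K_{1}\cup\cdots\cup K_{n})$ together with the monotonicity of $K\mapsto\bigcup_{t\in K}t^{-1}A$; everything else is a routine invocation of the ultrafilter lemma and the defining property of $\delta S$.
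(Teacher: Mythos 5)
Your proof is correct. Note that the paper itself supplies no proof of this lemma: it is stated as imported background from Hindman and McCutcheon \cite{key-5}, alongside the other lemmas of Section 2, so the only comparison available is with the standard argument. Your forward direction is that standard argument in ultrafilter language (one may equally write $\delta S\subseteq\overline{\sigma(H)}\subseteq\overline{\bigcup_{t\in H}t^{-1}A}=\bigcup_{t\in H}\overline{t^{-1}A}$). Your reverse direction is the usual compactness argument read in contrapositive: the direct route assumes $\delta S\subseteq\bigcup_{t\in H}\overline{t^{-1}A}$, notes that $\beta S\setminus\bigcup_{t\in H}\overline{t^{-1}A}=\overline{B}$ where $B=S\setminus\bigcup_{t\in H}t^{-1}A$, so the closed sets $\overline{\sigma(K)}$ for $K\in\mathcal{P}_{f}(S)$ together with $\overline{B}$ have empty intersection in the compact space $\beta S$; extracting a finite subfamily with empty intersection and replacing the finitely many $K$'s by their union together with $H$ gives $\sigma(K)\subseteq\bigcup_{t\in H}t^{-1}A\subseteq\bigcup_{t\in K}t^{-1}A$, i.e.\ $A$ is syndetic. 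Your finite-intersection-property check, whose key step is enlarging $K$ to contain $H$ so that failure of syndeticity at $K$ yields a point of $\sigma(K)\setminus\bigcup_{t\in H}t^{-1}A$, is precisely this compactness extraction run backwards; the ultrafilter lemma carries the load that compactness of $\beta S$ carries in the direct version. Both directions, including the identity $\sigma(K_{1})\cap\cdots\cap\sigma(K_{n})=\sigma(K_{1}\cup\cdots\cup K_{n})$ and the absorption of $H$ into $K$, are handled correctly, so there is no gap.
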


\begin{defn}
$K\left(\delta S\right)=\left\{ A:A\text{ is a minimal left ideal in }\delta S\right\} .$
\end{defn}

\begin{thm}
Let $\left(S,*\right)$ be an adequate partial semigroup and let $p\in\delta S.$
The following statements are equivalent.
\end{thm}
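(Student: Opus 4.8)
The plan is to prove the two substantive implications by reducing everything to the minimal-left-ideal structure of $\delta S$, which is legitimate because $(\delta S,*)$ is a compact Hausdorff right topological semigroup and $\rho_{q}$ is continuous for each $q\in\delta S$. I will use throughout the standard fact from that theory that $p\in K(\delta S)$ if and only if $\delta S*p$ is a minimal left ideal, equivalently $p\in\delta S*r*p$ for every $r\in\delta S$; note that $\delta S*r*p=\rho_{r*p}[\delta S]$ is compact, hence closed. The bookkeeping device is the set $B_{A}=\{x\in S:x^{-1}A\in p\}$ together with the translation identity: for any $w\in\beta S$ one has $A\in w*p$ iff $B_{A}\in w$, which is just the definition of the product $w*p$ (Lemma 2.6 guarantees these products lie in $\beta S$, and in $\delta S$ when both factors do). The target is the equivalence of $p\in K(\delta S)$ with the assertion that $B_{A}$ is syndetic for every $A\in p$, and I will use the characterisation that $B_{A}$ is syndetic iff $\delta S\subseteq\bigcup_{t\in H}\overline{t^{-1}B_{A}}$ for some $H\in\mathcal{P}_{f}(S)$.

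For necessity, assume $p\in K(\delta S)$, fix $A\in p$, and suppose for contradiction that $B_{A}$ is not syndetic. By the syndeticity characterisation, for every finite $H$ the sets $\overline{t^{-1}B_{A}}$, $t\in H$, fail to cover $\delta S$; the complementary basic closed sets therefore have the finite intersection property inside the compact space $\delta S$, so there is $q\in\delta S$ with $t^{-1}B_{A}\notin q$ for all $t\in S$. Applying the criterion with $r=q$ yields $p\in\delta S*q*p$, so $p=v*q*p$ for some $v\in\delta S$; writing $u=v*q\in\delta S$ (a product of two elements of $\delta S$, hence in $\delta S$ by Lemma 2.6) we get $A\in u*p$, i.e.\ $B_{A}\in u=v*q$. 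Unwinding the product, $B_{A}\in v*q$ means $\{t:t^{-1}B_{A}\in q\}\in v$; but that set is empty, so $\emptyset\in v$, a contradiction. Hence every $B_{A}$ is syndetic. This direction stays entirely inside $\delta S$, which is what makes it clean.

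For sufficiency I argue the contrapositive via the same criterion. Assuming $B_{A}$ is syndetic for all $A\in p$, fix $r\in\delta S$ and try to show $p\in\delta S*r*p$. If this failed, then since $\delta S*r*p$ is closed there would be $A\in p$ with $\overline{A}\cap(\delta S*r*p)=\emptyset$. Syndeticity of $B_{A}$ gives a finite $H$ with $\delta S\subseteq\bigcup_{t\in H}\overline{t^{-1}B_{A}}$, so the chosen $r\in\delta S$ satisfies $t^{-1}B_{A}\in r$ for some $t\in H$, whence $B_{A}\in t*r$ and therefore $A\in(t*r)*p=t*(r*p)$ by the translation identity and associativity. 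Thus $t*(r*p)\in\overline{A}$, and the desired contradiction is that this point lies in $\delta S*(r*p)$.

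The main obstacle is precisely this last step, and it is where the partial operation bites: for $t\in S$ the principal ultrafilter $e(t)$ need not belong to $\delta S$ (it does so only when $t\in\bigcap_{x\in S}\varphi(x)$), so $t*(r*p)=e(t)*(r*p)$ is a priori only an element of $\beta S$, not visibly of $\delta S*(r*p)$. Resolving this is the heart of the proof: one must upgrade the membership $A\in t*(r*p)$ to an honest witness inside $\delta S$, which I expect to do by exploiting adequacy together with the $\sigma(H)$-form of syndeticity, using $r\in\delta S\subseteq\overline{\sigma(H)}$ so that the relevant product with $r$ can be realised as $v*(r*p)$ for some $v\in\delta S$ via Lemma 2.6 and the continuity of $\rho_{r*p}$. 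The necessity direction and all the ultrafilter algebra are routine once the structure theorem for $\delta S$ is invoked; the genuine work, and the only place where ``partial'' is more than cosmetic, is keeping every product defined and inside $\delta S$ while translating between the $S$-level and $\delta S$-level formulations of syndeticity.
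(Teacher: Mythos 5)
The paper offers no proof of this theorem at all: it is quoted from Hindman and McCutcheon (\cite{key-5}, Theorem 2.15), so your attempt can only be judged on its own merits and against the standard argument. Two of your three ingredients are fine. Taking the equivalence of (a) and (c) from general compact right topological semigroup theory is legitimate once Theorem 2.10 is in hand (if $p$ lies in a minimal left ideal $L$, then $\delta S*q*p$ is a left ideal contained in $L$, hence equal to $L$; conversely taking $q\in K(\delta S)$ gives $p\in\delta S*q*p\subseteq K(\delta S)$). Your necessity direction is also correct and is the standard one: the sets $\overline{t^{-1}B_{A}}$ are clopen, so failure of every finite covering yields $q\in\delta S$ with $t^{-1}B_{A}\notin q$ for all $t\in S$, and then $p=v*q*p$ forces $\{t\in S:t^{-1}B_{A}\in q\}=\emptyset\in v$, a contradiction.

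The genuine gap is the sufficiency direction, and you flag it yourself: the argument stops at ``which I expect to do by exploiting adequacy together with\dots''. That unfinished step is not a technicality; it is the whole content of (b)$\Rightarrow$(c) in the partial-semigroup setting, and the repair you sketch aims at the wrong target. A witness $w\in\delta S$ with $A\in w*(r*p)$ exists if and only if the set $C=\{s\in S:s^{-1}B_{A}\in r\}$ meets $\sigma(F)$ for \emph{every} $F\in\mathcal{P}_{f}(S)$ (since $w\in\delta S$ exactly when every $\sigma(F)$ belongs to $w$). But syndeticity of $B_{A}$ only produces translators $t\in H$ with $t^{-1}B_{A}\in r$, with no control on left-composability: $t\in\sigma(F)$ would require $x*t$ to be defined for every $x\in F$, and any candidate manufactured from $t$, say $t*s$, has $x*(t*s)=(x*t)*s$, which again requires $x*t$ to be defined. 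Neither adequacy nor continuity of $\rho_{r*p}$ bridges this, so no direct ``upgrade'' of the witness into $\delta S$ is available. The implication is nevertheless true, and the honest route goes through (a) instead of directly to (c). Your covering argument does prove $p\in\beta S*(q*p)$ for every $q\in\delta S$: since $\rho_{q*p}$ is continuous and $S$ is dense in $\beta S$, one has $\beta S*(q*p)=\overline{\{t*(q*p):t\in S\}}$, and every neighborhood $\overline{A}$ of $p$ contains some $t*(q*p)$. Now specialize to $q\in K(\delta S)$. Then $q*p\in K(\delta S)$, so $q*p$ lies in a minimal left ideal $L=\delta S*(q*p)$ of $\delta S$; being a compact right topological semigroup, $L$ contains an idempotent $e$, which is a right identity on $L$. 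Writing $p=w*(q*p)$ with $w\in\beta S$ and using the mixed associativity $(w*r')*e=w*(r'*e)$ for $w\in\beta S$ and $r',e\in\delta S$ (verified directly from the definition of the product), one gets $p=w*((q*p)*e)=p*e\in\delta S*e=L\subseteq K(\delta S)$. That establishes (b)$\Rightarrow$(a), and (c) then follows from (a) by the minimal-left-ideal argument you already invoked. Without this step, or some equivalent of it, your proposal does not prove the theorem.
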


$\left(a\right)$ $p\in K\left(\delta S\right).$

$\left(b\right)$ For all $A\in p$, $\left\{ x\in S:x^{-1}A\in p\right\} $is
syndetic.

$\left(c\right)$ For all $q\in\delta S,p\in\delta S*q*p.$
\begin{proof}
\cite{key-5}, Theorem $2.15$
\end{proof}
\begin{defn}
Let $\left(S,*\right)$ be an adequate partial semigroup and let $A\subseteq S.$
\end{defn}

$\left(a\right)$ The set $A$ is $piecewise$ syndetic in $S$ if
and only if $\overline{A}\cap K\left(\delta S\right)\neq\phi.$

$\left(b\right)$ The set $A$ is $central$ in $S$ if and only if
there is some idempotent $p$ in $K\left(\delta S\right)$ such that
$A\in p.$

$\left(c\right)$ A set $A\subseteq S$ is a $J$-set if and only
if for all $F\in\mathcal{P}_{f}\left(\mathcal{F}\right)$ and all
$L\in\mathcal{P}_{f}\left(S\right),$ there exists $m\in N,a\in S^{m+1},$and
$t\in\mathcal{J}_{m}$ such that for all $f\in F$,
\[
\text{\ensuremath{\left(\prod_{i=1}^{m}a\left(i\right)*f\left(t\left(i\right)\right)\right)}*\ensuremath{\left(a\left(m+1\right)\right)}\ensuremath{\ensuremath{\in}A\ensuremath{\cap\sigma\left(L\right)}}}
\]

$\left(d\right)$ $J\left(S\right)=\left\{ p\in\delta S:\left(\forall A\in P\right)\left(A\text{ }is\text{ }a\text{ }J\text{-set}\right)\right\} .$
\begin{lem}
Let $\left(S,*\right)$ be an adequate partial semigroup and let $A\subseteq S$
be $piecewise\text{ }syndetic.$ There exists $H\in\mathcal{P}_{f}\left(S\right)$
such that for every finite nonempty set $T\subseteq\sigma\left(H\right),$there
exists $x\in\sigma\left(T\right)$ such that $T*x\subseteq\cup_{t\in H}t^{-1}A.$
\end{lem}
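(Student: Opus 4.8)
The plan is to characterize piecewise syndeticity of $A$ through the algebra of $\delta S$ and then extract a uniform witnessing set $H$ by a compactness argument. First I would recall that $A$ is piecewise syndetic exactly when $\overline{A}\cap K(\delta S)\neq\phi$, so fix a point $p\in\overline{A}\cap K(\delta S)$; thus $A\in p$ and $p$ lies in some minimal left ideal. By the equivalence theorem quoted above (statement $2.14$), membership of $p$ in $K(\delta S)$ is equivalent to the condition that $B^{\star}:=\{x\in S:x^{-1}B\in p\}$ is syndetic for every $B\in p$. Applying this to $B=A$ gives a set $A'=\{x\in S:x^{-1}A\in p\}$ that is syndetic, so by the syndeticity lemma there is some $H\in\mathcal{P}_f(S)$ with $\sigma(H)\subseteq\bigcup_{t\in H}t^{-1}A'$. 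This $H$ is the candidate I would produce; the content of the lemma is that this same $H$ works simultaneously for \emph{every} finite nonempty $T\subseteq\sigma(H)$.

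Next I would fix an arbitrary finite nonempty $T\subseteq\sigma(H)$ and seek the single element $x\in\sigma(T)$ with $T*x\subseteq\bigcup_{t\in H}t^{-1}A$. The idea is that for each $s\in T$, syndeticity of $A'$ places $s$ in some $t_s^{-1}A'$ with $t_s\in H$, i.e.\ $t_s*s\in A'$, which unwinds (via Lemma $2.3$) to $(t_s*s)^{-1}A\in p$, equivalently $s^{-1}(t_s^{-1}A)\in p$. Since $T$ is finite, the set $\bigcap_{s\in T}s^{-1}\!\bigl(t_s^{-1}A\bigr)$ is a finite intersection of members of $p$ and hence lies in $p$; in particular it is nonempty and, because $p\in\delta S\subseteq\overline{\sigma(T)}$, it meets $\sigma(T)$. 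Choosing any $x$ in that intersection which also lies in $\sigma(T)$ gives, for each $s\in T$, that $s*x\in t_s^{-1}A$, i.e.\ $s*x\in\bigcup_{t\in H}t^{-1}A$, which is precisely $T*x\subseteq\bigcup_{t\in H}t^{-1}A$ with $x\in\sigma(T)$.

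The main obstacle I anticipate is the bookkeeping needed to pass between the ``$t^{-1}A'$'' layer coming from syndeticity and the ``$s^{-1}(t^{-1}A)$'' layer needed for the conclusion, and to ensure the chosen $x$ simultaneously lies in $\sigma(T)$ and in all the finitely many translated sets. The key technical tool smoothing this is Lemma $2.3$, which lets me rewrite $s^{-1}(t^{-1}A)$ as $(t*s)^{-1}A$ whenever $s\in\varphi(t)$, and the adequacy of $S$ together with $p\in\delta S$ guarantees the relevant intersections remain in $p$ and therefore nonempty inside $\sigma(T)$. Once these membership manipulations are organized carefully, the uniformity of $H$ over all $T$ is immediate because $H$ was chosen once and for all from the syndeticity of the single set $A'$, independent of $T$.
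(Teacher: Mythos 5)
Your proof is correct and is essentially the canonical argument for this lemma: the paper itself states it without proof (it is quoted from the reference [HM], where the proof proceeds exactly as you do), namely picking $p\in\overline{A}\cap K\left(\delta S\right)$, invoking the equivalence that $\left\{ x\in S:x^{-1}A\in p\right\} $ is syndetic to obtain $H$, and then, for a given $T$, intersecting the finitely many sets $s^{-1}\left(t_{s}^{-1}A\right)=\left(t_{s}*s\right)^{-1}A\in p$ with $\sigma\left(T\right)\in p$ to extract $x$. All the membership manipulations (via Lemma 2.3 and $p\in\delta S\subseteq\overline{\sigma\left(T\right)}$) are handled correctly, so there is nothing to fix.
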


Now we will mention one of the crucial concept adequate sequence for
partial semigroup. 
\begin{defn}
.Let $\left(S,*\right)$ be an adequate partial semigroup and let
$f$ be a sequence in $S.$ Then $f$ is adequate if and only if
\end{defn}

$\left(1\right)$ for each $H\in\mathcal{P}_{f}\left(\mathbb{N}\right)$,
$\prod_{t\in H}f\left(t\right)$ is defined and

$\left(2\right)$ for each $F\in\mathcal{P}_{f}\left(S\right)$, there
exists $m\in\mathbb{N}$ such that 
\[
FP\left(\left(f\left(t\right)\right)_{t=m}^{\infty}\right)\subseteq\sigma\left(F\right).
\]

\begin{defn}
Let $\left(S,*\right)$ be an adequate partial semigroup. Then 
\[
\mathcal{F}=\left\{ f:f\text{\ensuremath{}is an adequate sequence in }S\right\} .
\]
\end{defn}

\section{PHULARA VERSION OF CENTRAL SETS THEOREM FOR ADEQUATE PARTIAL SEMIGROUP}

In \cite{key-8}, Jillian McLeod establishes a version of Theorem
\ref{anik} valid for commutative adequate partial semigroups. In
\cite{key-10}, Kendra Pleasant and in \cite{key-4}, Arpita Ghosh,
independently but later, prove a version of Theorem \ref{De} for
commutative adequate partial semigroups. In \cite{key-9} Plulara
generalized Central sets theorem for commutative semigroup. In this
paper, we show that Theorem \ref{Pu} remains valid for arbitrary
adequate partial semigroups. To prove that we need the following lemma.
\begin{lem}
\label{lem2}Let $\left(S,*\right)$ be an adequate partial semigroup
and let $A$ be a $J$-set in $S$. Let $r\in\mathbb{N},$ let $F\in\mathcal{P}_{f}\left(\mathcal{F}\right),$and
let $L\in\mathcal{P}_{f}\left(S\right).$ There exists $m\in\mathbb{N},a\in S^{m+1},and\text{ }t\in\mathcal{J}_{m}$
such that $t\left(1\right)>r$ and for all $f\in F,$
\end{lem}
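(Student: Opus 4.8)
The plan is to reduce the claim to the very definition of a $J$-set by shifting indices. The only new feature here, compared with the definition of a $J$-set, is the requirement $t\left(1\right)>r$; the membership conclusion is identical. So the strategy is to apply the $J$-set hypothesis not to the given sequences but to shifted copies of them, and then shift the resulting index vector back up by $r$.

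Concretely, for each $f\in F$ I would define a new sequence $\tilde{f}$ by $\tilde{f}\left(n\right)=f\left(n+r\right)$. The first thing to check is that each $\tilde{f}$ is again an adequate sequence, i.e. $\tilde{f}\in\mathcal{F}$. Condition $\left(1\right)$ of adequacy is immediate, since for $H\in\mathcal{P}_{f}\left(\mathbb{N}\right)$ we have $\prod_{t\in H}\tilde{f}\left(t\right)=\prod_{t\in H}f\left(t+r\right)$, which is defined because $f$ is adequate. For condition $\left(2\right)$, given $E\in\mathcal{P}_{f}\left(S\right)$ choose $m$ with $FP\left(\left(f\left(t\right)\right)_{t=m}^{\infty}\right)\subseteq\sigma\left(E\right)$; then taking $m'=\max\left\{ 1,m-r\right\}$ gives $FP\left(\left(\tilde{f}\left(t\right)\right)_{t=m'}^{\infty}\right)=FP\left(\left(f\left(t\right)\right)_{t=m'+r}^{\infty}\right)\subseteq FP\left(\left(f\left(t\right)\right)_{t=m}^{\infty}\right)\subseteq\sigma\left(E\right)$. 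Thus $\tilde{F}=\left\{ \tilde{f}:f\in F\right\}$ lies in $\mathcal{P}_{f}\left(\mathcal{F}\right)$.

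Next I would apply the definition of $J$-set to the pair $\left(\tilde{F},L\right)$. This yields $m\in\mathbb{N}$, $a\in S^{m+1}$ and $\tilde{t}\in\mathcal{J}_{m}$ such that for every $\tilde{f}\in\tilde{F}$,
\[
\left(\prod_{i=1}^{m}a\left(i\right)*\tilde{f}\left(\tilde{t}\left(i\right)\right)\right)*a\left(m+1\right)\in A\cap\sigma\left(L\right).
\]
Finally I would set $t\left(i\right)=\tilde{t}\left(i\right)+r$ for each $i\in\left[m\right]$. Since $\tilde{t}$ is strictly increasing so is $t$, whence $t\in\mathcal{J}_{m}$, and $t\left(1\right)=\tilde{t}\left(1\right)+r\geq 1+r>r$. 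Moreover $\tilde{f}\left(\tilde{t}\left(i\right)\right)=f\left(\tilde{t}\left(i\right)+r\right)=f\left(t\left(i\right)\right)$, so the displayed membership becomes exactly the required conclusion for the same $m$ and $a$ but with the shifted index vector $t$.

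The argument is entirely routine; the one place that demands a little care is the verification that shifting preserves adequacy, and in particular the bookkeeping in condition $\left(2\right)$ to ensure the shifted tail still lies inside $\sigma\left(E\right)$. Once that is in hand, the shift-and-shift-back mechanism delivers the extra constraint $t\left(1\right)>r$ with no further work.
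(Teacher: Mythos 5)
Your proof is correct: shifting each sequence by $r$, applying the $J$-set definition to the shifted family, and translating the index vector back is exactly the right mechanism, and your verification that the shift preserves both conditions of adequacy (the only non-trivial point in the partial-semigroup setting) is sound. Note that the paper itself gives no argument here --- it simply cites Lemma 3.5 of Hindman and Pleasant \cite{key-6} --- and your argument is precisely the standard proof of that cited lemma, so you have in effect supplied the proof the paper outsources.
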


$\left(\prod_{i=1}^{m}a\left(i\right)*f\left(t\left(i\right)\right)*a\left(m+1\right)\in A\cap\sigma\left(L\right)\right).$
\begin{proof}
\cite{key-6}, Lemma $3.5$
\end{proof}
Now we state the main theorem of this section.
\begin{thm}
Let $\left(S,*\right)$ be an adequate partial semigroup and let $r$
be an idempotent in $J\left(S\right)$ and let $\left(C_{n}\right)_{n=1}^{\infty}$
be a sequence of members of $r$. Then there exists functions $m^{*}:\mathcal{P}_{f}\left(\mathcal{F}\right)\rightarrow\mathbb{N},$and
$\alpha\in\times_{F\in\mathcal{P}_{f}\left(\mathcal{F}\right)}S^{m\left(F\right)+1}$
and $\mathcal{T}\in\times_{F\in\mathcal{P}_{f}\left(\mathcal{F}\right)}\mathcal{J}_{m^{*}\left(F\right)}$
such that 

1. If $F,G\in\mathcal{P}_{f}(\mathcal{F}),G\subset F$, then $\mathcal{T}(G)(m^{*}(G))<\mathcal{T}(F)(1).$
and 

2. If $m\in\mathbb{N}$ and $G_{1},G_{2},...,G_{s}\in\mathcal{P}_{f}\left(\mathcal{F}\right),G_{1}\subset G_{2}\subset.....\subset G_{s},\mid G_{1}\mid=m$,
and $f_{i}\in G_{i},i=1,2,...,s.$ then

\[
\prod_{i=1}^{s}\left(\left(\prod_{j=1}^{m^{*}(G_{i})}\alpha(G_{i})(j)*f_{i}(\mathcal{T}(G_{i})(j)\right)*\alpha(G_{i})(m^{*}(G_{i})+1)\right)\in C_{m}
\]
\end{thm}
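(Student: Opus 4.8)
The plan is to construct $m^{*},\alpha,\mathcal{T}$ simultaneously by induction on $|F|$, listing $\mathcal{P}_{f}(\mathcal{F})$ in order of increasing cardinality so that $G\subsetneq F$ forces $G$ to be treated before $F$. Two ideas drive the argument. First, every set occurring in a chain that \emph{terminates} at $F$ is a subset of the finite set $F$, so only finitely many ``preceding products'' can arise, and an intersection over them will stay in the ultrafilter $r$. Second, I would strengthen the target in clause (2) from $C_{m}$ to the sharpened set $C_{m}^{\star}=\{x\in C_{m}:x^{-1}C_{m}\in r\}$; since $r$ is idempotent and $C_{m}\in r$, Lemma \ref{lem1} gives $C_{m}^{\star}\in r$ and allows it to be reapplied at the next stage.

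Processing a set $F$, I assume $m^{*}(G),\alpha(G),\mathcal{T}(G)$ are already chosen for every proper subset $G$ of $F$, meeting (1) and the strengthened (2). Put $r_{0}=\max\{\mathcal{T}(G)(m^{*}(G)):\emptyset\neq G\subsetneq F\}$, with $r_{0}=0$ if no such $G$ exists. Because each chain $G_{1}\subsetneq\cdots\subsetneq G_{s-1}\subsetneq F$ lies inside the finite set $F$ and each $G_{i}$ offers finitely many choices $f_{i}\in G_{i}$, the collection $W$ of all initial products $w=\prod_{i=1}^{s-1}\big(\big(\prod_{j}\alpha(G_{i})(j)*f_{i}(\mathcal{T}(G_{i})(j))\big)*\alpha(G_{i})(m^{*}(G_{i})+1)\big)$ is finite, and by the induction hypothesis each such $w$ lies in $C_{m(w)}^{\star}$, where $m(w)=|G_{1}|$. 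I then form
\[
A_{F}=C_{|F|}^{\star}\cap\bigcap_{w\in W}w^{-1}\big(C_{m(w)}^{\star}\big).
\]
By Lemma \ref{lem1} each $w^{-1}(C_{m(w)}^{\star})\in r$, so $A_{F}$ is a finite intersection of members of $r$ and hence $A_{F}\in r$; as $r\in J(S)$, $A_{F}$ is a $J$-set. Choosing $L\in\mathcal{P}_{f}(S)$ to contain every $w\in W$ and applying Lemma \ref{lem2} to $A_{F}$, the family $F$, and the bound $r_{0}$, I obtain $m^{*}(F),\alpha(F)\in S^{m^{*}(F)+1},\mathcal{T}(F)\in\mathcal{J}_{m^{*}(F)}$ with $\mathcal{T}(F)(1)>r_{0}$ and with $v_{f}=\big(\prod_{j=1}^{m^{*}(F)}\alpha(F)(j)*f(\mathcal{T}(F)(j))\big)*\alpha(F)(m^{*}(F)+1)\in A_{F}\cap\sigma(L)$ for every $f\in F$.

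Verification is then direct. Clause (1) holds since $\mathcal{T}(F)(1)>r_{0}\geq\mathcal{T}(G)(m^{*}(G))$ for all $G\subsetneq F$. For clause (2), take a chain $G_{1}\subsetneq\cdots\subsetneq G_{s}=F$ with $|G_{1}|=m$ and $f_{i}\in G_{i}$, and write the full product as $w*v_{f_{s}}$, where $w$ is the initial product over $G_{1},\dots,G_{s-1}$ (absent when $s=1$). If $s=1$ then $m=|F|$ and $v_{f_{s}}\in C_{|F|}^{\star}\subseteq C_{m}$; if $s\geq2$ then $w\in W$ with $m(w)=m$ and $v_{f_{s}}\in w^{-1}(C_{m}^{\star})$, so $w*v_{f_{s}}\in C_{m}^{\star}\subseteq C_{m}$, the product being defined because $v_{f_{s}}\in\sigma(L)\subseteq\varphi(w)$. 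In both cases the product lands in $C_{m}^{\star}$, which establishes (2) and simultaneously preserves the strengthened hypothesis for later stages.

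The main obstacle is the definedness bookkeeping intrinsic to the partial operation: one must guarantee that every iterated product inside $v_{f}$, and every composite $w*v_{f}$, is actually defined. This is precisely what the $\sigma(L)$-clause of Lemma \ref{lem2} together with the adequacy of the sequences in $\mathcal{F}$ are built to control, with $L$ enlarged at each step to absorb the finitely many preceding products in $W$; meanwhile the gap condition $\mathcal{T}(F)(1)>r_{0}$ keeps the index blocks of successive chain levels disjoint, so associativity can be applied to regroup $w*v_{f}$ without ambiguity.
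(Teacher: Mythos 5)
Your proposal is correct and follows essentially the same route as the paper's proof: induction on $|F|$, strengthening the target to $C_m^{*}=\{x\in C_m: x^{-1}C_m\in r\}$, collecting the finite set of initial chain-products (your $W$, the paper's $M_m$), intersecting $C_{|F|}^{*}$ with the preimages $w^{-1}(C_{m(w)}^{*})$ to get a $J$-set in $r$, and applying Lemma \ref{lem2} with the bound $r_0$ to secure clause (1) and the two cases $s=1$, $s\geq 2$ of clause (2). The only cosmetic differences are that the paper first reduces to a decreasing sequence $C_{n+1}\subseteq C_n$ (which, as your argument shows, is not actually needed) and that your explicit enlargement of $L$ to contain $W$ is harmless but redundant, since membership in $w^{-1}(C_{m(w)}^{*})$ already entails definedness of the product $w*v_{f_s}$.
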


\begin{proof}
We assume $C_{n+1}\subseteq C_{n}$ for each $n\in\mathbb{N}$ ( If
not, consider $B_{n}=\cap_{i=1}^{n}C_{i}$ , so $B_{n+1}\subseteq B_{n}$
) . For each $n\in\mathbb{N}$, let $C_{n}^{*}=\left\{ x\in C_{n}:x^{-1}C_{n}\in r\right\} .$
Then $C_{n}^{*}\in r$ and by Lemma \ref{lem1} if $x\in C_{n}^{*}$
then $x^{-1}C_{n}^{*}\in r.$ 

Now we use induction hypothesis to prove the statement. 

Let $\mid F\mid=1$ and $F=\left\{ f\right\} $. Then statement 1
is vacuously true. Pick $d\in S$ and let $L=\left\{ d\right\} .$
Pick $m\in\mathbb{N}$, $a\in S^{m+1}$ and $t\in\mathcal{J}_{m}$
such that $\prod_{i=1}^{m}a\left(i\right)*f\left(t\left(i\right)\right)*a\left(m+1\right)\in C_{1}^{*}$
Let $m^{*}\left(F\right)=m$ , $\alpha\left(F\right)=a$ , $\mathcal{T}(F)=t$
( By Lemma \ref{lem2}). Now let the statement is true for all $F$
with $\mid F\mid<n$ , $n\in\mathbb{N}.$ Let 
\[
M_{m}=\left\{ \begin{array}{c}
\prod_{i=1}^{s}\left(\left(\prod_{j=1}^{m^{*}(G_{i})}\alpha(G_{i})(j)*f_{i}(\mathcal{T}(G_{i})(j)\right)*\alpha(G_{i})(m^{*}(G_{i})+1)\right):\\
s\in\mathbb{N},\mid G_{1}\mid=m\text{ and for each }i\in\left\{ 1,2,...,s\right\} ,\\
f_{i}\in G_{i},\phi\subsetneq G_{1}\subsetneq G_{2}\subsetneq....\subsetneq G_{s}\subsetneq F
\end{array}\right\} 
\]
where $m\in\left\{ 1,2,....,n-1\right\} $. Then $M_{m}$ is finite
and by induction $M_{m}\subseteq C_{m}^{*}$.

Let $A=C_{n}^{*}\cap\left(\cap_{m=1}^{n-1}\left(\bigcap_{x\in M_{m}}\left(x^{-1}C_{m}^{*}\right)\right)\right)$.

Then $A\in r,$ so $A$ is a $J$-set. 

Let $d=max\left\{ \mathcal{T}\left(G\right)\left(m^{*}\left(G\right)\right):\phi\neq G\subsetneq F\right\} $.
By Lemma \ref{lem2} pick $k\in\mathbb{N}$,$a\in S^{k+1}$and $t\in\mathcal{J}_{k}$
such that $t\left(1\right)>d$ and for all $\text{f\ensuremath{\in F}}$.
\[
\prod_{i=1}^{k}a\left(i\right)*f\left(t\left(i\right)\right)*a\left(k+1\right)\in A
\]
Define $m\left(F\right)=k$, $\alpha\left(F\right)=a$,$\mathcal{\text{ }T}\left(F\right)=\text{t}$.
So (1) is satisfied. To verify hypothesis (2) assume $s=1$, then
$G_{1}=F$ and $m=n$, so 
\[
\prod_{i=1}^{m}a\left(i\right)*f_{s}\left(t\left(i\right)\right)*a\left(m+1\right)\in A\subseteq C_{m}^{*}
\]
Let $y=\prod_{i=1}^{s-1}\left(\left(\prod_{j=1}^{m^{*}(G_{i})}\alpha\left(G_{i}\right)\left(j\right)*f_{i}(\mathcal{T}(G_{i})(j)\right)*\alpha(G_{i})(m^{*}(G_{i})+1)\right)$.
Then $y\in M_{m}$ , so $\prod_{i=1}^{m}a\left(i\right)*f_{s}\left(t\left(i\right)\right)*a\left(m+1\right)\in y^{-1}C_{m}^{*}$
therefore 
\[
\prod_{i=1}^{s}\left(\left(\prod_{j=1}^{m^{*}(G_{i})}\alpha(G_{i})(j)*f_{i}(\mathcal{T}(G_{i})(j)\right)*\alpha(G_{i})(m^{*}(G_{i})+1)\right)\in C_{m}^{*}\subseteq C_{m}
\]
 .
\end{proof}
\begin{cor}
Let $\left(S,*\right)$ be a commutative adequate partial semigroup
and let $r$ be an idempotent in $J\left(S\right)$ and let $\left(C_{n}\right)_{n=1}^{\infty}$
be a sequence of members of $r$. Then there exists functions
\end{cor}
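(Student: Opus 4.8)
The plan is to deduce this corollary directly from the theorem just proved, using commutativity to collapse each nested product into a single $\alpha$-term multiplied by a product of $f_i$-values indexed by a set $H(G_i)$. First I would apply that theorem to the idempotent $r$ and the sequence $(C_n)_{n=1}^{\infty}$; write $m^*:\mathcal{P}_f(\mathcal{F})\to\mathbb{N}$, $\alpha_0$ (with $\alpha_0(F)\in S^{m^*(F)+1}$), and $\mathcal{T}$ (with $\mathcal{T}(F)\in\mathcal{J}_{m^*(F)}$) for the resulting functions, which satisfy properties (1) and (2) there.

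Next, for each $F\in\mathcal{P}_f(\mathcal{F})$ I would define $\alpha(F)=\prod_{j=1}^{m^*(F)+1}\alpha_0(F)(j)\in S$, the product of all coordinates of the vector $\alpha_0(F)$, and $H(F)=\{\mathcal{T}(F)(j):1\le j\le m^*(F)\}\in\mathcal{P}_f(\mathbb{N})$, the range of the strictly increasing tuple $\mathcal{T}(F)$. Because $\mathcal{T}(F)$ is increasing we have $\max H(F)=\mathcal{T}(F)(m^*(F))$ and $\min H(F)=\mathcal{T}(F)(1)$, so property (1) of the theorem, namely $\mathcal{T}(G)(m^*(G))<\mathcal{T}(F)(1)$ whenever $G\subsetneq F$, becomes exactly $\max H(G)<\min H(F)$, which is condition (1) of the corollary.

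The core step is to rewrite the product in property (2). Fix $G_1\subsetneq\cdots\subsetneq G_s$ with $\mid G_1\mid=m$ and $f_i\in G_i$ for each $i$. For each $i$, the $i$-th factor in the theorem's product is a product, taken in a fixed order, of the $2m^*(G_i)+1$ elements $\alpha_0(G_i)(1),f_i(\mathcal{T}(G_i)(1)),\ldots,\alpha_0(G_i)(m^*(G_i)),f_i(\mathcal{T}(G_i)(m^*(G_i))),\alpha_0(G_i)(m^*(G_i)+1)$, and by hypothesis this product is defined. Invoking the commutativity axiom for partial semigroups --- that if a product is defined then so is every rearrangement of it, with the same value --- I would regroup the $m^*(G_i)+1$ coordinates of $\alpha_0(G_i)$ on one side and the $m^*(G_i)$ values $f_i(\mathcal{T}(G_i)(j))$ on the other, so that the $i$-th factor equals $\alpha(G_i)*\prod_{t\in H(G_i)}f_i(t)$. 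Performing this for every $i$ shows that the product of property (2) coincides with $\prod_{i=1}^{s}\left(\alpha(G_i)*\prod_{t\in H(G_i)}f_i(t)\right)$, which by property (2) of the theorem lies in $C_m$; this is precisely condition (2) of the corollary.

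The one delicate point is keeping track of definedness: in a partial semigroup a reordering is legitimate only because the original product is already known to be defined, so each regrouping must be justified by the commutativity and associativity axioms --- which transfer definedness along with equality of values --- rather than by equating two expressions both of which are merely assumed to exist. Once this bookkeeping is handled, the argument is routine and uses no combinatorial input beyond the theorem.
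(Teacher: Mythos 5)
Your proposal is correct and is essentially the paper's own proof: the paper likewise sets $\gamma(F)=\prod_{j=1}^{m^{*}(F)+1}\alpha(F)(j)$ and $H(F)=\left\{ \mathcal{T}(F)(j):j\in\{1,2,\ldots,m^{*}(F)\}\right\} $ and leaves the commutativity rearrangement and definedness bookkeeping (which you spell out) implicit. The only wrinkle is that the corollary as printed concludes membership in $C_{m}^{*}$ rather than $C_{m}$; this is obtained either by applying the theorem to the sequence $\left(C_{n}^{*}\right)_{n=1}^{\infty}$, legitimate since each $C_{n}^{*}\in r$, or by observing that the theorem's proof actually lands in $C_{m}^{*}$.
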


$\gamma:\mathcal{P}_{f}\left(\mathcal{F}\right)\to S$ and $H:\mathcal{P}_{f}\left(\mathcal{F}\right)\to\mathcal{P}_{f}\left(\mathbb{N}\right)$
such that

$\left(1\right)$ if $F,G\in\mathcal{P}_{f}\left(\mathcal{F}\right)$
and $G\subsetneq F$ then $maxH\left(G\right)<minH\left(F\right)$and

$\left(2\right)$ if $n\in\mathbb{N},G_{1},G_{2},....,G_{n}\in\mathcal{P}_{f}\left(\mathcal{F}\right)$;$G_{1}\subsetneq G_{2}\subsetneq....\subsetneq G_{n}$;$\mid G_{1}\mid=m$
and for each $i\in\left\{ 1,2,....,n\right\} ,f_{i}\in G_{i},$ then
$\prod_{i=1}^{n}\left(\gamma\left(G_{i}\right)*\prod_{t\in H\left(G_{i}\right)}f_{i}\left(t\right)\right)\in C_{m}^{*}$
\begin{proof}
Let $m^{*},\alpha$ and $\mathcal{T}$ be as guaranteed by previous
theorem. For $F\in\mathcal{P}_{f}\left(\mathcal{F}\right),$Let $\gamma\left(F\right)=\prod_{j=1}^{m^{*}\left(F\right)+1}\alpha\left(F\right)\left(j\right)$
and let $H\left(F\right)=\left\{ \mathcal{T}\left(F\right)\left(j\right):j\in\left\{ 1,2,...,m^{*}\left(F\right)\right\} \right\} $.
\end{proof}
\begin{cor}
\label{cor}Let $\left(S,*\right)$ be a non trivial commutative adequate
partial semigroup,let $r$ be an idempotent in $J(S)$, Let $\left(C_{n}\right)_{n=1}^{\infty}$
be a sequence of members of $r$, let $k\in\mathbb{N}$ and for each
$l\in\left\{ 1,2,...,k\right\} $,let $\left(y_{l,n}\right)_{n=1}^{\infty}$be
an adequate sequence in $S$. Then there exists a sequence $\left(a_{n}\right)_{n=1}^{\infty}$in
$S$ and a sequence $\left(H_{n}\right)_{n=1}^{\infty}$ in $\mathcal{P}_{f}\left(\mathbb{N}\right)$
with $\max H_{n}<\min H_{n+1}$for each $n$ such that for $l\in\left\{ 1,2,....,k\right\} $and
for each $F\in\mathcal{P}_{f}\left(\mathbb{N}\right)$ with $m=\min F$
one has
\end{cor}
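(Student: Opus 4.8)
The plan is to deduce this concrete, sequence-indexed statement from the preceding (functional) corollary, which supplies maps $\gamma\colon\mathcal{P}_f(\mathcal{F})\to S$ and $H\colon\mathcal{P}_f(\mathcal{F})\to\mathcal{P}_f(\mathbb{N})$. The whole game is to choose a single increasing chain of finite families of adequate sequences along which $\gamma$ and $H$ are read off as the desired sequences $(a_n)$ and $(H_n)$, while arranging the bookkeeping so that the cardinality of the smallest family in the chain equals $\min F$; this is what selects the correct set $C_m$.

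First I would use the non-triviality of $S$ together with adequacy to produce infinitely many pairwise distinct adequate sequences $g_1,g_2,g_3,\dots\in\mathcal{F}$, chosen so that the $k$ given sequences $y_{1,\cdot},\dots,y_{k,\cdot}$ all occur among $g_1,\dots,g_k$. I then set $G_n=\{g_1,\dots,g_n\}$, so that $G_1\subsetneq G_2\subsetneq\cdots$ with $|G_n|=n$ for every $n$, and $\{y_{1,\cdot},\dots,y_{k,\cdot}\}\subseteq G_n$ once $n\ge k$. Finally I define $a_n=\gamma(G_n)$ and $H_n=H(G_n)$.

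Condition (1) is then immediate: applying part (1) of the previous corollary to $G_n\subsetneq G_{n+1}$ gives $\max H_n<\min H_{n+1}$. For condition (2), fix $l\in\{1,\dots,k\}$ and $F=\{n_1<n_2<\cdots<n_s\}$ with $m=\min F=n_1$. Provided $m\ge k$, each $y_{l,\cdot}$ lies in $G_{n_1}\subseteq G_{n_i}$, so I may apply part (2) of the previous corollary to the chain $G_{n_1}\subsetneq\cdots\subsetneq G_{n_s}$ with the choices $f_i=y_{l,\cdot}\in G_{n_i}$ and with $|G_{n_1}|=n_1=m$. This yields
\[
\prod_{n\in F}\Bigl(a_n*\prod_{t\in H_n}y_{l,t}\Bigr)=\prod_{i=1}^{s}\Bigl(\gamma(G_{n_i})*\prod_{t\in H(G_{n_i})}y_{l,t}\Bigr)\in C_m^{*}\subseteq C_m,
\]
which is exactly the asserted membership.

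The main obstacle I anticipate is the index matching rather than any hard estimate: the functional corollary always deposits its product in $C_{|G_1|}$, so to land in $C_{\min F}$ I am forced to keep $|G_n|=n$ throughout the chain, and simultaneously to keep all $k$ target sequences inside the smallest family $G_{\min F}$. These two demands are compatible only when $\min F\ge k$, which is precisely the place where the non-triviality hypothesis is needed, as it guarantees enough distinct adequate sequences to pad the families up to the required cardinalities. I would therefore verify at the outset that a non-trivial adequate partial semigroup really does carry infinitely many distinct adequate sequences, since the construction of the chain rests entirely on that fact.
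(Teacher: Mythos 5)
There is a genuine gap: your construction proves the conclusion only for those $F$ with $\min F\ge k$, while the corollary asserts it for \emph{every} $F\in\mathcal{P}_f(\mathbb{N})$. Concretely, take $k=2$ and $F=\{1\}$: your $G_1=\{g_1\}$ is a singleton, so it cannot contain both $y_{1,\cdot}$ and $y_{2,\cdot}$, and part (2) of the previous corollary cannot be invoked for at least one value of $l$. You notice the restriction yourself, but you misdiagnose it as ``precisely the place where the non-triviality hypothesis is needed''; non-triviality has nothing to do with it (it is only used to manufacture enough distinct padding sequences, in your argument and in the paper's alike). The restriction is an artifact of insisting that $\left|G_n\right|=n$, i.e.\ of trying to hit $C_{\min F}$ exactly, and there is no way to repair it within that framework, since all $k$ sequences must lie in the smallest family of the chain simultaneously.

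The paper removes the restriction with a reduction you did not make: assume without loss of generality that $C_{n+1}\subseteq C_n$ for all $n$, by replacing $C_n$ with $\bigcap_{i=1}^{n}C_i$, which is still a member of the idempotent $r$ because ultrafilters are closed under finite intersections. Once the $C_n$ are nested, one does not need the product to land in $C_m$ on the nose; landing in $C_{m'}$ for any $m'\ge m$ suffices. So the paper puts all $k$ given sequences into \emph{every} family from the start: it picks pairwise distinct $\gamma_1,\gamma_2,\ldots\in\mathcal{F}\setminus\{y_{1,\cdot},\ldots,y_{k,\cdot}\}$ (here non-triviality is used) and sets $G_u=\{y_{1,\cdot},\ldots,y_{k,\cdot}\}\cup\{\gamma_1,\ldots,\gamma_u\}$, so that $\left|G_u\right|=u+k$ and $y_{l,\cdot}\in G_u$ for all $u$. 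With $a_u=\gamma(G_u)$ and $H_u=H(G_u)$ as in your proposal, any $F=\{n_1<\cdots<n_s\}$ with $m=n_1$ gives a chain $G_{n_1}\subsetneq\cdots\subsetneq G_{n_s}$ with $\left|G_{n_1}\right|=m+k$, and part (2) of the previous corollary yields $\prod_{n\in F}\bigl(a_n*\prod_{t\in H_n}y_{l,t}\bigr)\in C_{m+k}\subseteq C_m$, with no restriction on $\min F$. The rest of your argument (condition (1), and the identification of the product along the chain) then goes through unchanged.
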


$\prod_{n\in F}\left(a_{n}*\prod_{t\in H_{n}}y_{l,t}\right)\in C_{m}$.
\begin{proof}
We may assume that $C_{n+1}\subseteq C_{n}$ for each $n\in\mathbb{N}$.
Pick $\gamma$ and $H$ as guarnteed by previous corollary. Choose
$\gamma_{u}\in\mathcal{F}$\textbackslash$\left\{ \left(y_{1,n}\right)_{n=1}^{\infty},\left(y_{2,n}\right)_{n=1}^{\infty},...,\left(y_{k,n}\right)_{n=1}^{\infty}\right\} $
such that $\gamma_{u}\neq\gamma_{v}$ if $u\neq v$ which we can do
because S is non trivial. For $u\in\mathbb{N}$ let 
\[
G_{u}=\text{\ensuremath{\left\{  \left(y_{1,n}\right)_{n=1}^{\infty},\left(y_{2,n}\right)_{n=1}^{\infty},...,\left(y_{k,n}\right)_{n=1}^{\infty}\right\} } \ensuremath{\cup}}\left\{ \gamma_{1},\gamma_{2},...,\gamma_{u}\right\} 
\]
Let $a_{u}=\gamma\left(G_{u}\right)$ and $H_{u}=H\left(G_{u}\right)$.
Let $l\in\left\{ 1,2,...,k\right\} $ and let $F\in\mathcal{P}_{f}\left(\mathbb{N}\right)$
be enumerated in order as $\left\{ n_{1},n_{2},...n_{s}\right\} $so
that $m=n_{1}$ then $G_{m}=G_{n_{1}}\subsetneq G_{n_{2}}\subsetneq...\subsetneq G_{n_{s}}$.
Also for each $i\in\left\{ 1,2,...,s\right\} $,$\left(y_{l,t}\right)_{t=1}^{\infty}\in G_{n_{i}}$
and $\mid G_{n_{1}}\mid=m+k$ , so $\prod_{n\in F}\left(a_{n}*\prod_{t\in H_{n}}y_{l,t}\right)=\prod_{i=1}^{s}\left(\gamma\left(G_{n_{i}}\right)*\prod_{t\in H\left(G_{n_{i}}\right)}y_{l,t}\right)\in C_{m+k}\subseteq C_{m}$.
\end{proof}
Now we will see some combinatorial applications.
\begin{defn}
Let $u,v\in\mathbb{N}$ and let $A$ be a $u\times v$ matrix with
entries from $\mathbb{Q}.$ Then $A$ satisfies the first entries
conditions if and only if no row of $A$ is $\overrightarrow{0}$
and whenever $i,j\in\left\{ 1,2,...,u\right\} $ and $k=min\left\{ t\in\left\{ 1,2,...,v\right\} :a_{i,t}\neq0\right\} =min\left\{ t\in\left\{ 1,2,...,v\right\} :a_{j,t}\neq0\right\} ,$then
$a_{i,t}=a_{j,t}>0.$ An element $b\in\mathbb{Q}$ is a first entry
of $A$ if and only if there is some row $i$ of $A$ such that $b=a_{i,k}\text{ where }k=min\left\{ t\in\left\{ 1,2,...,v\right\} :a_{i,t}\neq0\right\} .$

If $A$ satisfies the first entries condition , we say that $A$ is
a first entries matrix. 
\end{defn}

\begin{thm}
Let $S$ be a commutative adequate partial semigroup and $A$ be a
$u\times v$ matrix which satisfies the first entries condition. L$ $et
$\left(C_{n}\right)_{n=1}^{\infty}$ be central subsets of $S$ .
Assume that for each first entry $c$ of $A$, and for each $n\in\mathbb{N}$,
$cS\cap C_{n}$ is a central{*} set. Then for each $i=1,2,\cdots,v$
there exists adequate sequence in $S$ $\left(x_{i,n}\right)_{n=1}^{\infty}$
such that for every $F\in\mathcal{P}_{f}\left(\mathbb{N}\right)$
with $min\,F=m$, we have $A\overrightarrow{x}_{F}\in\left(C_{m}\right)^{u},$where
$\overrightarrow{x}_{F}\in S^{v}$ .
\[
\overrightarrow{x}_{F}=\left(\begin{array}{c}
\underset{n\in F}{\prod}x_{1,n}\\
\underset{n\in F}{\prod}x_{2,n}\\
\vdots\\
\underset{n\in F}{\prod}x_{v,n}
\end{array}\right).
\]
\end{thm}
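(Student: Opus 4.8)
The plan is to argue by induction on the number of columns $v$, using Corollary~\ref{cor} as the engine that, given finitely many adequate sequences, produces the data $(a_{n})$, $(H_{n})$ together with the matching of $\min F=m$ to the target set $C_{m}$. Before starting the induction I would make two preliminary reductions. First, replace $(C_{n})$ by $\left(\bigcap_{i\le n}C_{i}\right)$ so that $C_{n+1}\subseteq C_{n}$. Second, fix a minimal idempotent $r\in K(\delta S)\subseteq J(S)$ witnessing centrality, so that $C_{n}\in r$ for all $n$; since each $cS\cap C_{n}$ is assumed central$^{\ast}$, it lies in every minimal idempotent and in particular $cS\cap C_{n}\in r$ as well. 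The point of these reductions is that the sets $cS\cap C_{n}$ are nested members of $r$, and by Lemma~\ref{lem1} their $\ast$-shifts $x^{-1}\!\left((cS\cap C_{n})^{\ast}\right)$ again belong to $r$; this self-similarity is exactly what will let the tails built from later columns be absorbed.

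For the base case $v=1$, the first entries condition forces every entry of $A$ to equal a common positive first entry $c$, so every coordinate of $A\overrightarrow{x}_{F}$ is $c\cdot\prod_{n\in F}x_{1,n}$. Here I would apply Corollary~\ref{cor} (with a single input adequate sequence chosen to lie inside $cS$, and with the sequence of central sets taken to be $(cS\cap C_{n})_{n=1}^{\infty}$, which are members of $r$), obtaining products lying in $cS\cap C_{m}$ whenever $\min F=m$. Because these products lie in $cS$ I can factor out the common $c$ and define the adequate sequence $(x_{1,n})$ so that $c\cdot\prod_{n\in F}x_{1,n}\in C_{m}$. Adequacy of $(x_{1,n})$ is not automatic but can be forced: the construction behind Corollary~\ref{cor} rests on Lemma~\ref{lem2}, where one may additionally demand that the selected products land in $\sigma(L)$ for any prescribed finite $L$, which is precisely what the definition of an adequate sequence requires.

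For the inductive step with $v\ge 2$, I would partition the rows of $A$ into the set $R$ of rows whose first nonzero entry sits in column $1$ (all sharing the same first entry $c>0$) and the remaining rows. Deleting column $1$ yields a $u\times(v-1)$ first entries matrix $B$, and applying the induction hypothesis to $B$ with the same $(C_{n})$ produces adequate sequences $(x_{2,n}),\dots,(x_{v,n})$ with $B\overrightarrow{x}_{F}\in(C_{m})^{u}$ respecting $m=\min F$. It then remains to build $(x_{1,n})$ so that each $R$-row, whose coordinate has the form $c\cdot\prod_{n\in F}x_{1,n}$ combined with a tail coming from columns $2,\dots,v$ that already lies in $C_{m}$, lands in $C_{m}$. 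Having fixed the other columns first, the constraint on $(x_{1,n})$ is to fall into $cS\cap C_{m}$ after the fixed tail is removed; I would use that $cS\cap C_{m}$ is central$^{\ast}$ so that its intersection with the merely central sets forced by the tails remains a member of $r$, and then re-run Corollary~\ref{cor} together with Lemma~\ref{lem2} to extract $(x_{1,n})$, invoking the shift-invariance of Lemma~\ref{lem1} to absorb the tail $\ast$-factor.

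The main obstacle is the coherent bookkeeping in the partial-semigroup setting: one must simultaneously keep every product $\prod_{t\in H_{n}}(\cdot)$ and $\prod_{n\in F}(\cdot)$ defined and verify that each constructed $(x_{i,n})$ is genuinely adequate (handled through the $\sigma(L)$-clauses of Lemma~\ref{lem2}), align the index $m=\min F$ with the correct target $C_{m}$ across each peeling of a column, and, hardest of all, combine within a single coordinate of $A\overrightarrow{x}_{F}$ the inductively fixed tails with the freshly chosen first-column factor. This last point is the delicate one: because the tail is only central rather than central$^{\ast}$, the step genuinely requires the hypothesis that $cS\cap C_{m}$ be central$^{\ast}$, so that intersecting it with the tail-constraints still yields a member of $r$, after which the self-similarity $x^{-1}\!\left((cS\cap C_{m})^{\ast}\right)\in r$ from Lemma~\ref{lem1} guarantees the first-column factor can be chosen inside a member of $r$.
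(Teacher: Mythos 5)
Your overall skeleton (induction on the number of columns, base case via Corollary \ref{cor} applied to $\left(cS\cap C_{n}\right)_{n=1}^{\infty}$ with the common first entry $c$ factored out, nestedness reduction) matches the paper, but the inductive step as you describe it has two genuine gaps. First, deleting column $1$ while keeping all $u$ rows does not produce a first entries matrix: a row whose first nonzero entry lies in column $1$ may become the zero row after deletion, or two such rows may end up with unequal leading entries in the same later column, e.g. $\bigl(\begin{smallmatrix}0 & 1\\ 2 & 3\end{smallmatrix}\bigr)$ satisfies the first entries condition but its second column alone does not. The paper's induction hypothesis is applied only to the block $\mathcal{B}=\left(a_{i,j+1}\right)_{i\leq t}$ of rows having $0$ in the first column; the tails of the remaining rows (the $*$ block) are never covered by the induction at all.

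Second, and more seriously, your order of construction --- fix columns $2,\dots,v$ once and for all from the induction hypothesis, then choose $\left(x_{1,n}\right)$ so that each row of $R$ lands in $C_{m}$ --- cannot work. For a row $i\in R$ the requirement is that $c\prod_{n\in F}x_{1,n}$ combined with the tail $\prod_{j\geq2}\bigl(a_{i,j}\prod_{n\in F}x_{j,n}\bigr)$ lie in $C_{m}$ for \emph{every} $F\in\mathcal{P}_{f}\left(\mathbb{N}\right)$: these are infinitely many constraints whose shifts vary with $F$, and neither central*-ness nor Lemma \ref{lem1} lets you intersect infinitely many of them; nor does Corollary \ref{cor} let you prescribe the tails in advance, since its output has the inseparable form $\prod_{n\in F}\bigl(a_{n}*\prod_{s\in H_{n}}y_{l,s}\bigr)$, with the $a_{n}$ interleaved between blocks determined by the $H_{n}$ that the corollary itself produces. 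The paper resolves this by letting a single application of Corollary \ref{cor} build all columns simultaneously: take $w_{j,n}$ from the induction hypothesis (applied to $\mathcal{B}$ only), feed the sequences $y_{i,n}=\prod_{j=2}^{v+1}a_{i,j}\cdot w_{j-1,n}$ (for $i>t$) into Corollary \ref{cor} with target sets $C_{n}\cap dS$, obtain $\left(a_{n}\right)$ and blocks $\left(H_{n}\right)$, and only then define $x_{1,n}$ by $a_{n}=d\cdot x_{1,n}$ and, crucially, $x_{j,n}=\prod_{s\in H_{n}}w_{j-1,s}$ for $j\geq2$. This regrouping of the $w$'s along the blocks $H_{n}$ is exactly what makes rows $i>t$ come out as $\prod_{n\in F}\bigl(a_{n}\prod_{s\in H_{n}}y_{i,s}\bigr)\in C_{m}\cap dS$, while rows $i\leq t$ reduce to the induction hypothesis evaluated at $H=\bigcup_{n\in F}H_{n}$, where $\min H\geq\min F$ and the nestedness $C_{m'}\subseteq C_{m}$ closes the argument. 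Without this device your inductive step stalls at precisely the point you yourself flag as hardest.
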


\begin{proof}
We can assume that $C_{n+1}\subseteq C_{n}$. First we take $v=1$.
We can assume that $A$ has no repeated rows. In that case $A=\left(c\right)$
for some $c\in\mathbb{N}$ such that $cS$ is a central{*} set and
$\left(C_{n}\cap cS\right)_{n=1}^{\infty}$ is a sequence of members
of $p\in K(\delta S)$, satisfying $C_{n+1}\cap cS\subseteq C_{n}\cap cS$. 

Since we are in the base case, i.e. $v=1$, by Corollary \ref{cor},
we have adequate sequences $\left(a_{n}\right)_{n=1}^{\infty}$and
$\left(y_{1,n}\right)_{n=1}^{\infty}$ in $S$ with $y_{1,n}=0$ for
all $n$, such that $\prod_{n\in F}a_{n}\in C_{m}\cap cS$ where $min\,F=m$.
We choose $cx_{1,n}=a_{n}$. So the sequence $\left(x_{1,n}\right)_{n=1}^{\infty}$
is as required.

Now assume $v\in\mathbb{N}$ and the theorem is true for $v$. Let,
$A$ be a $u\times(v+1)$ matrix which satisfies the first entries
condition$ $, and assume that for every first entry $c$ of $A$,
$C_{n}\cap cS$ is a central{*} set for all $n$. By rearranging rows
of $A$ and adding additional rows of $A$ if needed, we may assume
that we have some $t\in\{1,2,..,u-1\}$ and $d\in\mathbb{N}$ such
that 
\[
a_{i,1}=\begin{cases}
\begin{array}{c}
0\\
d
\end{array} & \begin{array}{c}
\text{if}\:i\leq t\\
\text{if}\:i>t
\end{array}\end{cases}.
\]

So the matrix in block form looks like 
\[
\left(\begin{array}{cc}
\bar{0} & \mathcal{B}\\
\bar{d} & *
\end{array}\right)
\]
 where $\mathcal{B}$ is a $t\times v$ matrix with entries $b_{i,j}=a_{i,j+1}$.
So by inductive hypothesis we can choose $\left(w_{i,n}\right)_{n=1}^{\infty},i=\left\{ 1,2,\cdots,v\right\} $
for the matrix $\mathcal{B}$.

Let for each $i\in\left\{ t+1,t+2,\cdots,u\right\} $ and each $n\in\mathbb{N}$,
\[
y_{i,n}=\prod_{j=2}^{v+1}a_{i,j}\cdotp w_{j-1,n}.
\]

Now we have that $\left(C_{n}\cap dS\right)_{n=1}^{\infty}$ is a
sequence of members of $p\in K(\delta S)$. So by Corollary\ref{cor}
we can choose $\left(a_{n}\right)_{n=1}^{\infty}$ in $\mathcal{T}$
and $\left(H_{n}\right)_{n=1}^{\infty}$ in $\mathcal{P}_{f}\left(\mathbb{N}\right)$
such that $max\,H_{n}<min\,H_{n+1}$ for each $n\in\mathbb{N}$and
for each $i\in\left\{ t+1,t+2,\cdots,u\right\} $ and for all $F\in\mathcal{P}_{f}\left(\mathbb{N}\right)$
with $min\,F=m$, then 
\[
\prod_{n\in F}\left(a_{n}\prod_{s\in H_{n}}y_{i,s}\right)\in C_{m}\cap dS.
\]

In particular if $F=\left\{ n\right\} $ then pick $x_{1,n}\in S$
such that $a_{n}=d\cdotp x_{1,n}$. For $j\in\left\{ 2,3,\cdots,v+1\right\} $,
define $x_{j,n}=\prod_{s\in H_{n}}w_{j-1,s}$. The proof will be done
if we can show that $\left(x_{j,n}\right)_{n=1}^{\infty}$ are the
required sequences. So we need to show that for each $i\in\left\{ 1,2,\cdots,u\right\} $,
\[
\prod_{j=1}^{v+1}a_{i,j}\prod_{n\in F}x_{j,n}\in C_{m}.
\]

If $i\leq t$, then, 
\[
\begin{array}{cc}
\prod_{j=1}^{v+1}\left(a_{i,j}\prod_{n\in F}x_{j,n}\right) & =\prod_{j=2}^{v+1}\left(a_{i,j}\prod_{n\in F}\prod_{s\in H_{n}}w_{j-1,s}\right)\\
 & =\prod_{j=1}^{v}\left(b_{i,j}\prod_{s\in H}w_{j,s}\right)
\end{array}
\]
 where $H=\bigcup_{n\in F}H_{n}$. Let $m'=min\,H$, then $m'\geq m$
due to the condition that $max\,H_{n}<min\,H_{n+1}$ for each $n\in\mathbb{N}$.
Now by induction hypothesis we have, 
\[
\prod_{j=1}^{v+1}\left(a_{i,j}\prod_{n\in F}x_{j,n}\right)=\prod_{j=1}^{v}\left(b_{i,j}\prod_{s\in H}w_{j,s}\right)\in C_{m'}\subseteq C_{m}.
\]
 For the case $i>t$, 
\[
\begin{array}{cc}
\prod_{j=1}^{v+1}\left(a_{i,j}\prod_{n\in F}x_{j,n}\right) & =a_{i,1}\prod_{n\in F}x_{i,n}\prod_{j=2}^{v+1}\left(a_{i,j}\prod_{n\in F}\prod_{s\in H_{n}}w_{j-1,s}\right)\\
 & =d\prod_{n\in F}x_{1,n}\prod_{j=2}^{v+1}\left(a_{i,j}\prod_{n\in F}\prod_{s\in H_{n}}w_{j-1,s}\right)\\
 & =\prod_{n\in F}dx_{1,n}\prod_{n\in F}\prod_{s\in H_{n}}\prod_{j=2}^{v+1}a_{i,j}w_{j-1,s}\\
 & =\prod_{n\in F}\left(a_{n}\prod_{s\in H_{n}}y_{i,s}\right)\in C_{m}
\end{array}
\]
 and the theorem is done.
\[
\overrightarrow{x}_{F}=\left(\begin{array}{c}
\underset{n\in F}{\prod}x_{1,n}\\
\underset{n\in F}{\prod}x_{2,n}\\
\vdots\\
\underset{n\in F}{\prod}x_{v,n}
\end{array}\right).
\]
\end{proof}

\section{PHULARA VERSION OF CENTRAL SETS THEOREM FOR VIP SYSTEMS IN PARTIAL
SEMIGROUP}

Now we concentrate on a special class of finite families of VIP systems
and proceed for further generalization of Central sets Theorem.
\begin{defn}
Let $\left(S,+\right)$ be commutative adequate partial semigroup.
A finite set $\left\{ \left(v_{\alpha}^{\left(i\right)}\right)_{\alpha\in\mathcal{P}_{f}\left(\mathbb{N}\right)}:1\leq i\leq k\right\} $
of VIP systems is said to be $adequate$ if there exists $d,t\in\mathbb{N},$a
set $\left\{ \left(m_{\gamma}\right)_{\gamma\in\mathcal{F}_{d}}:i\in\left[k\right]\right\} ,$
a set of VIP systems 
\[
\left\{ \left(u_{\alpha}^{\left(i\right)}=\sum_{\gamma\subseteq\alpha,\gamma\in\mathcal{F}_{d}}n_{\gamma}^{\left(i\right)}\right)_{\alpha\in\mathcal{P}_{f}\left(\mathbb{N}\right)}:i\in\left[t\right]\right\} ,
\]
 and sets $E_{1},E_{2},...,E_{k}\subseteq\left\{ 1,2,...,t\right\} $
such that:
\end{defn}

$\left(1\right)$ For each $i\in\left\{ 1,2,...,k\right\} ,\left(m_{\gamma}\right)_{\gamma\in\mathcal{F}_{d}}$
generates $\left(v_{\alpha}^{\left(i\right)}\right)_{\alpha\in\mathcal{F}}$.

$\left(2\right)$ For every $H\in\mathcal{P}_{f}\left(S\right),$there
exists $m\in\mathbb{N}$ such that for every $l\in\mathbb{N}$ and
pairwise distinct $\gamma_{1},\gamma_{2},...,\gamma_{l}\in\mathcal{F}_{d}$
with each 
\[
\gamma_{i}\nsubseteq\left\{ 1,2,...,m\right\} ,\sum_{i=1}^{t}\sum_{j=1}^{l}n_{\gamma_{j}}^{\left(i\right)}\in\sigma\left(H\right)\cup\left\{ 0\right\} .
\]
( In particular, the sum is defined)

$\left(3\right)$ $\text{\ensuremath{m_{\gamma}^{\left(i\right)}}=\ensuremath{\sum_{t\in E_{i}}n_{\gamma}^{\left(t\right)}}}$
for all $i\in\left\{ 1,2,...,k\right\} $ and all $\gamma\in\mathcal{F}_{d}$.
\begin{defn}
Let $\left(S,+\right)$ be a commutative adequate partial semigroup
and let $\mathcal{A}\subseteq\mathcal{P}_{f}\left(S\right).\mathcal{A}$
is said to be $adequately\text{ }partition\text{ }regular$ if for
every finite subset $H$ of $S$ and every $r\in\mathbb{N},$ there
exists a finite set $F\subseteq\sigma\left(H\right)$ having the property
that if $F=\cup_{i=1}^{r}C_{i}$ then for some $j\in\left\{ 1,2,...,r\right\} ,C_{j}$
contains a member of $\mathcal{A}.$ $\mathcal{A}$ is said to be
$shift\text{ }invariant$ if for all $A\in\mathcal{A}$ and all $x\in\sigma\left(A\right),A+x=\left\{ a+x:a\in A\right\} \in\mathcal{A}.$
\end{defn}

Let us now mention some useful theorem from \cite{key-5} for proof
of our main theorem.
\begin{thm}
Let $\left(S,+\right)$ be a commutative adequate partial semigroup
and let $k\in\mathbb{N}$. If $\left\{ \left(v_{\alpha}^{\left(i\right)}\right)_{\alpha\in\mathcal{P}_{f}\left(\mathbb{N}\right)}:1\leq i\leq k\right\} $
is an adequate set of VIP systems in $S$ , and $\beta\in\mathcal{P}_{f}\left(\mathbb{N}\right)$,
then the family 
\[
\mathcal{A}=\left\{ \begin{array}{c}
\left\{ a,a+v_{\alpha}^{\left(1\right)},a+v_{\alpha}^{\left(2\right)},...,a+v_{\alpha}^{\left(k\right)}\right\} :\\
\alpha\in\mathcal{P}_{f}\left(\mathbb{N}\right),a\in\sigma\left(\left\{ v_{\alpha}^{\left(1\right)},v_{\alpha}^{\left(2\right)},...,v_{\alpha}^{\left(k\right)}\right\} \right)and\text{ }\alpha>\beta
\end{array}\right\} 
\]
 is adequately partition regular.
\end{thm}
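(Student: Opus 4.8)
The plan is to reduce the statement to the Hales--Jewett theorem, using the generator decomposition supplied by adequacy as the bridge between the VIP configurations and combinatorial lines. Writing $u_\alpha^{(s)}=\sum_{\gamma\subseteq\alpha,\ \gamma\in\mathcal{F}_d}n_\gamma^{(s)}$ for the auxiliary systems and invoking condition $(3)$, each target system factors through the common generators as
\[
v_\alpha^{(i)}=\sum_{\gamma\subseteq\alpha,\ \gamma\in\mathcal{F}_d}m_\gamma^{(i)}=\sum_{s\in E_i}\ \sum_{\gamma\subseteq\alpha,\ \gamma\in\mathcal{F}_d}n_\gamma^{(s)}=\sum_{s\in E_i}u_\alpha^{(s)}.
\]
Hence the entire family $\mathcal{A}$ is a single combinatorial pattern determined by the finitely many generators $n_\gamma^{(s)}$ ($\gamma\in\mathcal{F}_d$, $1\le s\le t$) together with the prescribed index sets $E_1,\dots,E_k$, with the base point $a$ free and the shift to the $c$-th point equal to $v_\alpha^{(c)}$.

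First I would fix the data $H\in\mathcal{P}_f(S)$ and $r\in\mathbb{N}$ for which a robust finite set $F\subseteq\sigma(H)$ must be produced. Applying condition $(2)$ to $H$, I would choose $m\in\mathbb{N}$ so large that whenever $\alpha\in\mathcal{P}_f(\mathbb{N})$ has all its elements exceeding both $\max\beta$ and $m$, every partial sum of generators occurring inside the $u_\alpha^{(s)}$ (and therefore each $v_\alpha^{(i)}$) lies in $\sigma(H)\cup\{0\}$. This is exactly the input needed to guarantee that every $+$ we form is defined and that the resulting configuration sits inside $\sigma(H)$; restricting the index set to integers beyond $\max\beta$ will also force $\alpha>\beta$ automatically.

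Next I would apply the Hales--Jewett theorem with alphabet $\{0,1,\dots,k\}$ (one symbol for the base point, one for each shift) and $r$ colours, obtaining a length $N=\mathrm{HJ}(k+1,r)$. Working over a block $B$ of $N$ consecutive integers lying beyond $\max\beta$ and beyond $m$, I would define an encoding $\Phi$ from words $w\in\{0,1,\dots,k\}^{B}$ into $\sigma(H)$, assembled from the generators $n_\gamma^{(s)}$ according to which coordinates of $w$ carry which symbol and the membership relations $s\in E_c$. The encoding is arranged so that a combinatorial line with moving-coordinate set $I$ is carried to $\{a,\,a+v_\alpha^{(1)},\dots,a+v_\alpha^{(k)}\}$ with $\alpha=I$ and $a$ the image of the all-zero completion on $I$. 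Setting $F=\Phi(\{0,1,\dots,k\}^{B})$, any $r$-colouring of $F$ pulls back to an $r$-colouring of the words, the monochromatic combinatorial line provided by Hales--Jewett yields a monochromatic member of $\mathcal{A}$ inside a single colour class, and this is precisely the assertion that $\mathcal{A}$ is adequately partition regular.

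The hard part will be the construction and verification of $\Phi$, and it is here that the genuinely VIP (rather than merely IP) nature of the systems bites. When $\gamma\in\mathcal{F}_d$ has $|\gamma|\ge 2$, the generator $n_\gamma^{(s)}$ couples moving and non-moving coordinates, so one must check that after substituting symbol $c$ on the moving set $I$ the net change of $\Phi$ depends only on $I$ and $c$ and equals $v_I^{(c)}=\sum_{s\in E_c}u_I^{(s)}$; the terms common to all $k+1$ line points must be absorbed into the freely chosen base point $a$, and each intermediate sum must remain defined in the partial semigroup. Condition $(2)$ is the tool that controls definedness, but threading it through every word rather than only through the final configuration is delicate. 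Should the direct encoding prove unwieldy, a robust alternative is a PET-style induction on the degree $d$: the discrete derivatives $v_{\alpha\cup\{n\}}^{(i)}-v_\alpha^{(i)}$ again form an adequate set of VIP systems, now of degree $d-1$, and one reduces the partition regularity at degree $d$ to that at degree $d-1$.
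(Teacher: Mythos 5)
Your reduction to the ordinary Hales--Jewett theorem cannot work for genuine VIP systems, and the obstruction is exactly the one you flag in your last paragraph but do not resolve. With alphabet $\{0,1,\dots,k\}$, a combinatorial line moves a set $I$ of coordinates in unison, and your encoding $\Phi$ needs the property that changing the symbol on $I$ from $0$ to $c$ shifts $\Phi$ by an amount depending only on $(I,c)$. If some generator $n_{\gamma}^{(s)}$ with $|\gamma|\ge 2$ is nonzero, then $\gamma$ can meet both $I$ and the fixed coordinates, and any encoding assembled from the generators picks up cross terms $n_{\gamma}^{(s)}$ with $\gamma\cap I\neq\phi\neq\gamma\setminus I$ whose contribution depends on the symbols occupying the fixed coordinates; so the increment along the line is $v_{I}^{(c)}$ plus a quantity varying with the fixed part of the word, and the line does not map onto a member of $\mathcal{A}$. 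This is not a removable technicality: for degree $d\ge 2$ the statement is essentially the set-polynomial (polynomial Hales--Jewett) theorem of Bergelson and Leibman, which is strictly stronger than Hales--Jewett. The paper does not prove the theorem at all; it quotes it as Theorem 3.7 of \cite{key-5}, where the proof encodes the adequate family through its generators as set-polynomials and invokes precisely that Bergelson--Leibman machinery. What your construction does prove is the degree-one case (IP systems), where $\Phi(w)=\sum_{j:\,w_{j}\neq 0}m_{\{j\}}^{(w_{j})}$ has no cross terms and condition $(2)$ handles definedness.

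Your fallback is also not a proof as stated. First, $v_{\alpha\cup\{n\}}^{(i)}-v_{\alpha}^{(i)}$ is not available in a partial semigroup: there is no subtraction, which is why the paper's Theorem 4.5 (Theorem 3.10 of \cite{key-5}) defines the derived systems $q_{\beta}^{(i,F)}$ directly from the generators via $b_{\varphi}^{(i,F)}=\sum_{\psi}m_{\varphi\cup\psi}^{(i)}$ rather than by differencing. Second, the degree bookkeeping is wrong: the derived systems, viewed as VIP systems in the new variable $\beta$, again have degree at most $d$, not $d-1$; the degree drop you want occurs only in the other variable, and converting that observation into an induction requires the PET ordering on weight vectors intertwined with a color-focusing argument --- that combination \emph{is} the proof of the polynomial Hales--Jewett theorem, not a routine reduction from $d$ to $d-1$. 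Moreover, partition regularity of the derived family does not formally yield partition regularity of the original family without that focusing scheme. So either cite \cite{key-5} (equivalently Bergelson--Leibman) for the general case, as the paper does, or restrict your claim to $d=1$.
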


\begin{proof}
\cite{key-5}, Theorem $3.7$
\end{proof}
\begin{thm}
Let $\left(S,+\right)$ be a commutative adequate partial semigroup
and let $\mathcal{A}$ be a shift invariant ,adequately partition
regular family of finite subsets of S. Let $E\subseteq S$ be piecewise
syndetic. Then $E$ contains a member of $\mathcal{A}$.
\end{thm}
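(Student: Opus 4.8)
The plan is to combine the combinatorial characterization of piecewise syndetic sets (the lemma asserting that for piecewise syndetic $E$ there is $H\in\mathcal{P}_f(S)$ such that every finite nonempty $T\subseteq\sigma(H)$ admits $x\in\sigma(T)$ with $T+x\subseteq\bigcup_{t\in H}t^{-1}E$) with the adequate partition regularity of $\mathcal{A}$, and then to clean up using shift invariance. First I would invoke that lemma on the piecewise syndetic set $E$ to obtain a fixed $H=\{t_1,\dots,t_r\}\in\mathcal{P}_f(S)$, where $r=|H|$. The key idea is to feed this very $H$ back into the definition of adequate partition regularity.

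Next, since $\mathcal{A}$ is adequately partition regular, I would apply its definition to this $H$ and to $r=|H|$, producing a finite set $F\subseteq\sigma(H)$ with the Ramsey property: any covering $F=\bigcup_{i=1}^{r}C_i$ has a cell $C_j$ containing a member of $\mathcal{A}$. Because $F$ is a finite nonempty subset of $\sigma(H)$, the piecewise syndetic lemma applied with $T=F$ yields $x\in\sigma(F)$ such that $F+x\subseteq\bigcup_{i=1}^{r}t_i^{-1}E$. I would then color $F$ by letting $C_i=\{a\in F: a+x\in t_i^{-1}E\}$; since every $a+x$ lies in some $t_i^{-1}E$, these cells cover $F$, so some $C_j$ contains a set $A\in\mathcal{A}$ with $A\subseteq C_j\subseteq F$.

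Finally I would transport $A$ into $E$ by two successive shifts. For each $a\in A$ we have $a+x\in t_j^{-1}E$, i.e. $a+x\in\varphi(t_j)$ and $t_j+(a+x)\in E$. Since $A\subseteq F$ and $x\in\sigma(F)$, we get $x\in\sigma(A)$, so shift invariance gives $A+x\in\mathcal{A}$. Moreover $t_j\in\varphi(a+x)$ for every $a\in A$ (the sums $t_j+(a+x)$ are defined, hence so are $(a+x)+t_j$ by commutativity), so $t_j\in\sigma(A+x)$, and shift invariance again gives $(A+x)+t_j\in\mathcal{A}$. By commutativity $(a+x)+t_j=t_j+(a+x)\in E$ for every $a\in A$, so $(A+x)+t_j\subseteq E$, exhibiting a member of $\mathcal{A}$ inside $E$.

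The main obstacle I anticipate is the bookkeeping of definedness in the partial setting rather than any deep combinatorics: one must verify at each step that the relevant sums exist, so that $x\in\sigma(A)$ and $t_j\in\sigma(A+x)$ genuinely hold and shift invariance is legitimately applicable. These facts follow from the inclusions $A\subseteq F\subseteq\sigma(H)$, the choice $x\in\sigma(F)$, and commutativity of $+$, but they are precisely the points where the argument could silently break if the domains of the partial operation were mishandled.
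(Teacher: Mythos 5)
Your proof is correct and is essentially the intended one: the paper offers no argument of its own for this theorem, deferring to Theorem 3.8 of Hindman and McCutcheon, and your route --- the piecewise-syndetic lemma producing $H$, adequate partition regularity applied to that same $H$ with $r=|H|$, the coloring $C_i=\{a\in F: a+x\in t_i^{-1}E\}$, and two successive applications of shift invariance (first by $x$, then by $t_j$) --- is exactly the proof in that cited source. The definedness checks you flag do go through for the reasons you give, namely $A\subseteq F\subseteq\sigma(H)$, the choice $x\in\sigma(F)$, and commutativity of the partial operation.
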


\begin{proof}
\cite{key-5},Theorem $3.8$
\end{proof}
\begin{thm}
Let $\left\{ \left(v_{\alpha}^{\left(i\right)}\right)_{\alpha\in\mathcal{P}_{f}\left(\mathbb{N}\right)}:1\leq i\leq k\right\} $
be an adequate set of VIP systems and pick $d,t\in\mathbb{N}$, a
set $\left\{ \left(m_{\gamma}^{\left(i\right)}\right)_{\gamma\in\mathcal{F}_{d}}:1\leq i\leq k\right\} $,
a set of VIP systems
\end{thm}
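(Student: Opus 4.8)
The plan is to run the same inductive scheme that drives the Phulara-type theorem of Section~3, but with the $J$-set configuration furnished by Lemma~\ref{lem2} replaced by the VIP configuration furnished by the partition-regularity machinery of this section. Writing the operation additively, as the VIP definitions do, I would first reduce to the nested case $C_{n+1}\subseteq C_{n}$ by passing to $B_{n}=\bigcap_{i=1}^{n}C_{i}$. Next I would pass to the starred sets $C_{n}^{*}=\{x\in C_{n}:-x+C_{n}\in r\}$; since $r$ is idempotent these lie in $r$, and Lemma~\ref{lem1} gives $-x+C_{n}^{*}\in r$ whenever $x\in C_{n}^{*}$. The crucial observation is that each $C_{n}^{*}$, being a member of the (minimal) idempotent $r$, is central and therefore piecewise syndetic, which is exactly the hypothesis required to feed a set into the theorem asserting that a shift-invariant, adequately partition regular family meets every piecewise syndetic set.

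The heart of the argument is an induction that simultaneously constructs the base-point sequence $(a_{n})_{n=1}^{\infty}$ in $S$ and the increment-index sequence $(\alpha_{n})_{n=1}^{\infty}$ in $\mathcal{P}_{f}(\mathbb{N})$. At the inductive step I would assemble the finite collection $M_{m}$ of all partial VIP combinations $\sum_{n\in F}\bigl(a_{n}+v_{\alpha_{n}}^{(i)}\bigr)$ already produced whose least index is $m$, in direct analogy with the set $M_{m}$ of Section~3; the inductive hypothesis places each such combination in $C_{m}^{*}$. I would then form
\[
A=C_{n}^{*}\cap\bigcap_{m=1}^{n-1}\ \bigcap_{x\in M_{m}}\bigl(-x+C_{m}^{*}\bigr),
\]
which again belongs to $r$ and is hence central and piecewise syndetic. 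Applying the partition-regularity theorem for the given adequate set of VIP systems (to obtain that the family $\mathcal{A}$ of configurations $\{a,a+v_{\alpha}^{(1)},\dots,a+v_{\alpha}^{(k)}\}$ with $\alpha>\beta$ is adequately partition regular) together with the embedding theorem, I would extract a single base point $a$ and an index set $\alpha>\beta$ with $\{a+v_{\alpha}^{(i)}:1\le i\le k\}\subseteq A$. Setting $a_{n}=a$ and $\alpha_{n}=\alpha$, the membership $a+v_{\alpha}^{(i)}\in A\subseteq -x+C_{m}^{*}$ translates (for $x\in M_{m}$) into $x+\bigl(a+v_{\alpha}^{(i)}\bigr)\in C_{m}^{*}$, so every extended partial combination stays in the correct $C_{m}^{*}\subseteq C_{m}$; choosing $\beta$ to exceed all previously used indices enforces the gap condition $\max\alpha_{n}<\min\alpha_{n+1}$.

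I expect the main obstacle to be twofold. First, to invoke the embedding theorem I must confirm that the family $\mathcal{A}$ attached to the adequate set of VIP systems is genuinely shift invariant in the partial-semigroup sense, i.e.\ that $A+x\in\mathcal{A}$ for $x\in\sigma(A)$; this requires checking that translating the base point keeps it in $\sigma(\{v_{\alpha}^{(1)},\dots,v_{\alpha}^{(k)}\})$ and that the sums remain defined, which leans on conditions $(2)$ and $(3)$ in the definition of an adequate set of VIP systems. Second, the bookkeeping tying the Phulara least-index parameter $m$ to the increasing index sets $\alpha_{n}$ must be arranged so that the constraint $\alpha>\beta$ can always be met while still landing inside $A$; the interplay between the degree $d$, the adequacy bound $m$ supplied by condition~$(2)$, and the threshold $\beta$ needed for the gap condition is where I anticipate the delicate verification, and it is precisely here that the adequacy hypotheses on the VIP systems do the essential work that replaces the explicit $J$-set estimates of Section~3.
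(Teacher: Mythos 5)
Your proposal proves the wrong statement. The theorem you were asked about (whose statement continues past the displayed fragment) is a purely combinatorial closure property of adequacy: given the generators $\left(m_{\gamma}^{\left(i\right)}\right)_{\gamma\in\mathcal{F}_{d}}$ of the adequate set of VIP systems and given $\alpha_{1}<\alpha_{2}<\dots<\alpha_{s}$ in $\mathcal{P}_{f}\left(\mathbb{N}\right)$, one defines $b_{\varphi}^{\left(i,F\right)}=\sum_{\psi\subseteq\cup_{j\in F}\alpha_{j},\,\mid\psi\mid\leq d-\mid\varphi\mid}m_{\varphi\cup\psi}^{\left(i\right)}$ and $q_{\beta}^{\left(i,F\right)}=\sum_{\varphi\subseteq\beta,\varphi\in\mathcal{F}_{d}}b_{\varphi}^{\left(i,F\right)}$, and must show that $\left\{ \left(q_{\beta}^{\left(i,F\right)}\right)_{\beta>\alpha_{s}}:i\in\left[k\right],F\subseteq\left\{ 1,\dots,s\right\} \right\} $ is again an adequate set of VIP systems. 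Central sets, the idempotent $r$, the sets $C_{n}$ and $C_{n}^{*}$, piecewise syndeticity, and partition regularity --- the entire apparatus of your proposal --- do not occur in this statement. A proof must exhibit generating families for the new systems (the natural candidates being the $b_{\varphi}^{\left(i,F\right)}$ themselves), verify the degree bound, and check conditions $\left(1\right)$, $\left(2\right)$ and $\left(3\right)$ of the definition of an adequate set of VIP systems: in particular the sum-definedness condition $\left(2\right)$ for the new generators beyond a suitable threshold, and the decomposition $\left(3\right)$ via index sets $E_{i}\subseteq\left\{ 1,\dots,t\right\} $. None of this is attempted in your proposal. (For the record, the paper does not reprove this result either; it simply cites Theorem 3.10 of Hindman and McCutcheon.)

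What you sketched instead is essentially the proof of the paper's Theorem \ref{main}, for which the present statement is a key ingredient: in the inductive step there one forms the difference systems $q_{\beta}^{\left(i,\gamma\right)}=v_{\gamma\cup\beta}^{\left(i\right)}-v_{\gamma}^{\left(i\right)}$ for $\gamma\in FU\left(\left(\alpha_{t}\right)_{t=1}^{n}\right)$, and it is exactly the theorem in question that guarantees these, together with the original systems, form an adequate set, so that the partition-regularity machinery may be applied to them. Note moreover that even read as a sketch of Theorem \ref{main}, your induction stumbles at precisely this point: you track combinations $\sum_{n\in F}\left(a_{n}+v_{\alpha_{n}}^{\left(i\right)}\right)$, implicitly treating the VIP systems as additive over disjoint index sets; but VIP systems are not IP systems, $v_{\gamma\cup\beta}^{\left(i\right)}$ need not equal $v_{\gamma}^{\left(i\right)}+v_{\beta}^{\left(i\right)}$, and the configuration actually required is $\sum_{t\in F}a_{t}+v_{\gamma}^{\left(i\right)}$ with $\gamma=\cup_{t\in F}\alpha_{t}$. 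Bridging that discrepancy is what the difference systems --- and hence the statement you were asked to prove --- are for, so your argument is in effect circular: it presupposes the very closure property in question.
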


$\left\{ \left(u_{\alpha}^{\left(i\right)}=\sum_{\gamma\subseteq\alpha,\gamma\in\mathcal{F}_{d}}n_{\gamma}^{\left(i\right)}\right)_{\alpha\in\mathcal{P}_{f}\left(\mathbb{N}\right)}:1\leq i\leq t\right\} $,

and sets $E_{1},E_{2},...,E_{k}\subseteq\left\{ 1,2,...,t\right\} $
satisfying conditions $\left(1\right),\left(2\right),$and $\left(3\right)$
of Definition 3.5. Let $\alpha_{1},\alpha_{2},...,\alpha_{s}\in\mathcal{P}_{f}\left(\mathbb{N}\right)$
with $\alpha_{1}<\alpha_{2}<...<\alpha_{s}$. For $F\subseteq\left\{ 1,2,...,s\right\} $,
$i\in\left\{ 1,2,...,k\right\} $ and $\varphi\in\mathcal{F}_{d}$
with $\varphi>\alpha_{s}$, and $1\leq i\leq k$, let

$b_{\varphi}^{\left(i,F\right)}=\sum_{\psi\subseteq\cup_{j\in F}\alpha_{j},\mid\psi\mid\leq d-\mid\varphi\mid}m_{\varphi\cup\psi}^{\left(i\right)}$.

For $F\subseteq\left\{ 1,2,...,s\right\} $, $i\in\left\{ 1,2,...,k\right\} $,
and $\beta\in\mathcal{F}_{d}$ with $\beta>\alpha_{s}$, let

$q_{\beta}^{\left(i,F\right)}=\sum_{\varphi\subseteq\beta,\varphi\in\mathcal{F}_{d}}b_{\varphi}^{\left(i,F\right)}$.

Then $\left\{ \left(q_{\beta}^{\left(i,F\right)}\right)_{\beta\in\mathcal{P}_{f}\left(\mathbb{N}\right),\beta>\alpha_{s}}:i\in\left[k\right],F\subseteq\left\{ 1,2,...,s\right\} \right\} $
is an adequate set of VIP systems.
\begin{proof}
\cite{key-5}, Theorem 3.10.
\end{proof}
Here is our main theorem of this section. 
\begin{thm}
\label{main}Let $\left(S,+\right)$ be commutative adequate partial
semigroup and $p$ be an idempotent in $K\left(\delta S\right)$and
let $\left(C_{n}\right)_{n=1}^{\infty}$ be a sequence of members
of $p$ and 
\[
\left\{ \left(v_{\alpha}^{\left(i\right)}\right)_{\alpha\in\mathcal{P}_{f}\left(\mathbb{N}\right)}:1\leq i\leq k\right\} 
\]
 be $k$-many adequate set of VIP system. Then there exists sequences
$\left(a_{n}\right)_{n=1}^{\infty}$in $S$ and $\left(\alpha_{n}\right)_{n=1}^{\infty}$in
$\mathcal{P}_{f}\left(\mathbb{N}\right)$ such that $\alpha_{n}<\alpha_{n+1}$
for every $n$ and for every $F\in\mathcal{P}_{f}\left(\mathbb{N}\right),$$\gamma=\cup_{t\in F}\alpha_{t}$
such that for $m=minF$
\end{thm}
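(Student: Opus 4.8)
Since the statement is cut off at the word ``$m=\min F$'', I take the intended conclusion to be the VIP analogue of the Phulara theorem: for each $i\in\{1,\dots,k\}$ and each nonempty $F\in\mathcal P_f(\mathbb N)$ with $m=\min F$ and $\gamma=\bigcup_{t\in F}\alpha_t$, one has $\sum_{n\in F}a_n+v^{(i)}_{\gamma}\in C_m$. The plan is to imitate the recursion of the Phulara-type argument of Section 3, but to drive it with the VIP partition-regularity machinery of \cite{key-5} rather than with $J$-sets. First I would reduce to the case $C_{n+1}\subseteq C_n$ by replacing $C_n$ with $\bigcap_{j\le n}C_j$, and then pass to the idempotent refinements $C_n^{*}=\{x\in C_n:-x+C_n\in p\}$, where $-x+C_n=x^{-1}C_n$. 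Since $p$ is an idempotent, $C_n^{*}\in p$, and Lemma \ref{lem1} (with $*=+$) gives $-x+C_n^{*}\in p$ for every $x\in C_n^{*}$. The entire construction will be arranged so that each admissible partial sum lies in $C_m^{*}$, not merely in $C_m$; this strengthening is exactly what allows the recursion to continue.

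I would build $(a_n)$ and the blocks $\alpha_1<\alpha_2<\cdots$ one at a time, preserving the invariant that for every nonempty $F\subseteq\{1,\dots,s\}$, with $m=\min F$ and $\gamma_F=\bigcup_{t\in F}\alpha_t$, the quantity $y^{(i)}_F:=\sum_{n\in F}a_n+v^{(i)}_{\gamma_F}$ lies in $C_m^{*}$ for all $i$. The driving device is the differencing identity hidden in \cite{key-5}, Theorem $3.10$: for ordered disjoint blocks $\gamma_F<\beta$ (so $\max\gamma_F<\min\beta$) and each $i$,
\[
v^{(i)}_{\gamma_F\cup\beta}=v^{(i)}_{\gamma_F}+q^{(i,F)}_\beta .
\]
This is obtained by writing each $\gamma\in\mathcal F_d$ with $\gamma\subseteq\gamma_F\cup\beta$ uniquely as $\gamma=\psi\cup\varphi$ with $\psi\subseteq\gamma_F$ and $\varphi\subseteq\beta$, collecting the inner sums into the coefficients $b^{(i,F)}_\varphi$, and noting that the $\varphi=\emptyset$ contribution is exactly $v^{(i)}_{\gamma_F}$ while the remaining terms assemble into $q^{(i,F)}_\beta$. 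Consequently, if the newly chosen block is $\alpha_{s+1}=\beta$ and the new base point is $a_{s+1}=b$, then for $F'=F\cup\{s+1\}$ (with $\gamma_{F'}=\gamma_F\cup\beta$) the partial sum factors as
\[
\sum_{n\in F'}a_n+v^{(i)}_{\gamma_{F'}}=y^{(i)}_F+\bigl(b+q^{(i,F)}_\beta\bigr),
\]
and the boundary case $F=\emptyset$, giving $F'=\{s+1\}$ with $\min F'=s+1$, is covered by the conventions $y^{(i)}_\emptyset=0$ and $q^{(i,\emptyset)}_\beta=v^{(i)}_\beta$.

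For the inductive step I would fix $\alpha_1<\dots<\alpha_s$ and invoke \cite{key-5}, Theorem $3.10$, which guarantees that $\{(q^{(i,F)}_\beta)_{\beta>\alpha_s}:1\le i\le k,\ F\subseteq\{1,\dots,s\}\}$ is again an adequate set of VIP systems. By \cite{key-5}, Theorem $3.7$, the associated family $\mathcal A'$ of sets $\{b\}\cup\{b+q^{(i,F)}_\beta:i,F\}$, taken over base points $b$ and new blocks $\beta>\alpha_s$, is adequately partition regular, and it is shift invariant by the same associativity argument that works for the base family. I then form the single target
\[
D=C_{s+1}^{*}\cap\bigcap_{\emptyset\neq F\subseteq\{1,\dots,s\},\,1\le i\le k}\bigl(-y^{(i)}_F+C_{\min F}^{*}\bigr),
\]
a finite intersection of members of $p$: indeed $C_{s+1}^{*}\in p$, and each $-y^{(i)}_F+C_{\min F}^{*}=(y^{(i)}_F)^{-1}(C_{\min F}^{*})\in p$ by Lemma \ref{lem1}, because $y^{(i)}_F\in C_{\min F}^{*}$ by the invariant. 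Hence $D\in p$, and since $p\in K(\delta S)$ the set $D$ is piecewise syndetic. Applying \cite{key-5}, Theorem $3.8$, to $D$ yields a member $\{b\}\cup\{b+q^{(i,F)}_\beta:i,F\}\subseteq D$ with $\beta>\alpha_s$; I set $a_{s+1}=b$ and $\alpha_{s+1}=\beta$. Feeding $b+q^{(i,F)}_\beta\in D\subseteq -y^{(i)}_F+C_{\min F}^{*}$ through the factorization above gives $y^{(i)}_{F'}\in C_{\min F'}^{*}$ for every $F'\subseteq\{1,\dots,s+1\}$ and every $i$, which is precisely the invariant at stage $s+1$; the base case is the instance $s=0$, where $D=C_1^{*}$ and $q^{(i,\emptyset)}_\beta=v^{(i)}_\beta$.

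The step I expect to be the genuine obstacle is the partial-semigroup bookkeeping of definedness. The displays above tacitly assume that sums such as $b+q^{(i,F)}_\beta$ and $y^{(i)}_F+(b+q^{(i,F)}_\beta)$ exist, and in an adequate partial semigroup this has to be secured rather than assumed: one must keep each base point inside the relevant $\sigma(H)$, use condition $(2)$ in the definition of an adequate set of VIP systems to ensure that the $q^{(i,F)}_\beta$ take values in $S\cup\{0\}$ and in the required $\sigma$-sets, and exploit that $(y^{(i)}_F)^{-1}(C^{*}_{\min F})=\{z\in\varphi(y^{(i)}_F):y^{(i)}_F+z\in C^{*}_{\min F}\}$ already encodes the constraint $z\in\varphi(y^{(i)}_F)$. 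Verifying shift invariance of $\mathcal A'$ and confirming that the convention in \cite{key-5}, Theorem $3.10$, is the one excluding the empty $\varphi$ (so that $q^{(i,F)}_\beta=v^{(i)}_{\gamma_F\cup\beta}-v^{(i)}_{\gamma_F}$ rather than $v^{(i)}_{\gamma_F\cup\beta}$) are the two further points requiring care, but both are routine within the framework of Section 4.
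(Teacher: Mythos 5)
Your construction follows the paper's proof step for step: reduce to $C_{n+1}\subseteq C_{n}$, pass to the star sets $C_{n}^{*}$, difference the VIP systems along the union of the already-chosen blocks (the paper defines $q_{\beta}^{(i,\gamma)}=v_{\gamma\cup\beta}^{(i)}-v_{\gamma}^{(i)}$ for $\gamma\in FU\left(\left(\alpha_{t}\right)_{t=1}^{n}\right)$ and $\beta>\alpha_{n}$, which confirms your reading of the convention in \cite{key-5}, Theorem 3.10), invoke adequate partition regularity of the enlarged family together with shift invariance, use that a finite intersection $D$ of members of $p\in K\left(\delta S\right)$ is piecewise syndetic to pick $a_{n+1}$ and $\alpha_{n+1}$, and close the induction by the factorization $\sum_{t\in F}a_{t}+v_{\mu}^{(i)}=\left(\sum_{t\in H}a_{t}+a_{n+1}\right)+\left(v_{\gamma}^{(i)}+q_{\alpha_{n+1}}^{(i,\gamma)}\right)$. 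So this is the same route, not an alternative one.

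There is, however, one genuine gap, caused by your guess at the truncated conclusion. The paper's conclusion is
\[
\Bigl\{\textstyle\sum_{t\in F}a_{t}\Bigr\}\cup\Bigl\{\textstyle\sum_{t\in F}a_{t}+v_{\gamma}^{(i)}:1\leq i\leq k\Bigr\}\subseteq C_{m},
\]
so the unshifted sums $\sum_{t\in F}a_{t}$ must also land in $C_{m}$. Your invariant tracks only $y_{F}^{(i)}=\sum_{n\in F}a_{n}+v_{\gamma_{F}}^{(i)}$, and your set $D$ contains only the shifts $-y_{F}^{(i)}+C_{\min F}^{*}$; consequently, for $\left|F'\right|\geq2$ nothing forces $\sum_{n\in F'}a_{n}=\sum_{n\in F}a_{n}+b$ into $C_{\min F}^{*}$ (your convention $q_{\beta}^{(i,\emptyset)}=v_{\beta}^{(i)}$ only covers the singleton case $F'=\left\{ s+1\right\} $, via $b\in D\subseteq C_{s+1}^{*}$). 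The repair is exactly what the paper does: add the plain sums to the induction hypothesis, i.e.\ require $\sum_{t\in F}a_{t}\in C_{\min F}^{*}$ as well, and intersect $D$ additionally with the sets $-\sum_{t\in H}a_{t}+C_{\min H}^{*}$ for $\emptyset\neq H\subseteq\left[s\right]$; these are members of $p$ by the same Lemma \ref{lem1} argument, since $\sum_{t\in H}a_{t}\in C_{\min H}^{*}$ is then part of the hypothesis. With that one enlargement of the invariant and of $D$, your argument coincides with the paper's proof.
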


$\left\{ \sum_{t\in F}a_{t}\right\} \cup\left\{ \sum_{t\in F}a_{t}+v_{\gamma}^{\left(i\right)}:1\leq i\leq k\right\} \subseteq C_{m}.$
\begin{proof}
Let $C_{n}\in p\in K\left(\delta S\right).$We assume $C_{n+1}\subseteq C_{n}$
for each $n\in\mathbb{N}$ ( If not, consider $B_{n}=\cap_{i=1}^{n}C_{i}$
, so $B_{n+1}\subseteq B_{n}$ ). Let for each $n\in\mathbb{N}$,
let 
\[
C_{n}^{*}=\left\{ x\in C_{n}:-x+C_{n}\in p\right\} .
\]
Then for each $x\in C_{n}^{*}$ , $-x+C_{n}^{*}\in p$ by lemma 2.12.
Let 
\[
\mathcal{A}=\left\{ \left\{ a,a+v_{\alpha}^{\left(1\right)},a+v_{\alpha}^{\left(2\right)},...,a+v_{\alpha}^{\left(k\right)}\right\} :\alpha\in\mathcal{P}_{f}\left(\mathbb{N}\right),a\in\sigma\left(\left\{ v_{\alpha}^{\left(1\right)},v_{\alpha}^{\left(2\right)},...,v_{\alpha}^{\left(k\right)}\right\} \right)\right\} 
\]
Then by theorem 4.3 $\mathcal{A}$ is adequately partition regular
and $\mathcal{A}$ is trivially shift invariant. Since for each $n\in\mathbb{N},C_{n}^{*}\in p$
and $p\in K\left(\delta S\right)$ ,$C_{n}^{*}$ is piecewise syndetic.
So by theorem 4.4 , for some $a_{1}\in S$ and $\alpha_{1}\in\mathcal{P}_{f}\left(\mathbb{N}\right)$
such that 
\[
\left\{ a_{1},a_{1}+v_{\alpha_{1}}^{\left(1\right)},a_{1}+v_{\alpha_{1}}^{\left(2\right)},...,a_{1}+v_{\alpha_{1}}^{\left(k\right)}\right\} \subseteq C_{n}^{*}
\]
 for every $n\in\mathbb{N}.$ Now the proof is by induction, let $n\in\mathbb{N}$
and assume that we have chosen $\left(a_{t}\right)_{t=1}^{n}$in $S$
and $\left(\alpha_{t}\right)_{t=1}^{n}$ in $\mathcal{P}_{f}\left(\mathbb{N}\right)$
such that 

$\left(1\right)$ for $t\in\left[n-1\right]$, if any, $\alpha_{t}<\alpha_{t+1}$,
and

$\left(2\right)$ for $\phi\neq F\subseteq\left[n\right]$ $minF=m$
, if $\gamma=\cup_{t\in F}\alpha_{t}$ , then $\sum_{t\in F}a_{t}\in C_{m}^{*}$
and for each $i\in\left[k\right]$ , $\sum_{t\in F}a_{t}+v_{\gamma}^{\left(i\right)}\in C_{m}^{*}$. 

For each $\gamma\in FU\left(\left(\alpha_{t}\right)_{t=1}^{n}\right)$
and each $i\in\left[k\right]$, let 
\[
\left(q_{\beta}^{\left(i,\gamma\right)}\right)_{\beta\in\mathcal{P}_{f}\left(\mathbb{N}\right)}=\left(v_{\gamma\cup\beta}^{\left(i\right)}-v_{\gamma}^{\left(i\right)}\right)_{\beta\in\mathcal{P}_{f}\left(\mathbb{N}\right),\beta>\alpha_{n}}
\]
 By theorem 4.5 , the family,

$\left\{ \left(q_{\beta}^{\left(i,\gamma\right)}\right)_{\beta\in\mathcal{P}_{f}\left(\mathbb{N}\right),\beta>\alpha_{n}}:i\in\left[k\right],\gamma\in FU\left(\left(\alpha_{t}\right)_{t=1}^{n}\right)\right\} \cup\left\{ \left(v_{\beta}^{\left(i\right)}\right)_{\beta\in\mathcal{P}_{f}\left(\mathbb{N}\right)}:i\in\left[k\right]\right\} $
is an adequate set of VIP systems. Let 
\[
\mathcal{B}=\left\{ \begin{array}{c}
\left\{ a\right\} \cup\left\{ a+v_{\alpha}^{\left(i\right)}:i\in\left[k\right]\right\} \cup\cup_{\gamma\in FU\left(\left(\alpha_{t}\right)_{t=1}^{n}\right)}\left\{ a+q_{\alpha}^{\left(i,\gamma\right)}:i\in\left[k\right]\right\} \\
\begin{array}{c}
:\alpha\in\mathcal{P}_{f}\left(\mathbb{N}\right),\alpha<\alpha_{n}\text{\,}and\\
\text{ }a\in\sigma\left(\left\{ v_{\alpha}^{\left(i\right)}:i\in\left[k\right]\right\} \cup\left\{ q_{\alpha}^{\left(i,\gamma\right)}:i\in\left[k\right],\gamma\in FU\left(\left(\alpha_{t}\right)_{t=1}^{n}\right)\right\} \right)
\end{array}
\end{array}\right\} 
\]
Then by theorem $4.3$, $\mathcal{B}$ is adequately partition regular.
Let 
\[
D=C_{n+1}^{*}\cap\bigcap_{m=1}^{n}\left[\begin{array}{c}
\cap\left\{ -\sum_{t\in H}a_{t}+C_{m}^{*}:m=minH,\phi\neq H\subseteq\left[n\right]\right\} \bigcap\\
\cap\left\{ \begin{array}{c}
-\left(\sum_{t\in H}a_{t}+v_{\gamma}^{\left(i\right)}\right)+C_{m}^{*}:\\
m=minH,\phi\neq H\subseteq\left[n\right],and\gamma=\sum_{t\in H}\alpha_{t}
\end{array}\right\} 
\end{array}\right]
\]
Then $D\in p$ and $D$ is piecewise syndetic. So by theorem 3.8,
for some $\alpha_{n+1}\in\mathcal{P}_{f}\left(\mathbb{N}\right)$
such that $\alpha_{n+1}>\alpha_{n}$ and some 
\[
a_{n+1}\in\sigma\left(\left\{ v_{\alpha_{n+1}}^{\left(i\right)}:i\in\left[k\right]\right\} \cup\left\{ q_{\alpha_{n+1}}^{\left(i,\gamma\right)}:i\in\left[k\right]\text{ and }\gamma\in FU\left(\left(\alpha_{t}\right)_{t=1}^{n}\right)\right\} \right)
\]
 such that 
\[
\left\{ a_{n+1}\right\} \cup\left\{ a_{n+1}+v_{\alpha_{n+1}}^{\left(i\right)}:i\in\left[k\right]\right\} \cup\cup_{\gamma\in FU\left(\left(\alpha_{t}\right)_{t=1}^{n}\right)}\left\{ a_{n+1}+q_{\alpha_{n+1}}^{\left(i,\gamma\right)}:i\in\left[k\right]\right\} \subseteq D.
\]
 By induction hypothesis $\left(1\right)$ trivially holds. To verify
$\left(2\right)$, let $\phi\neq F\subseteq\left[n+1\right]$ and
let $\gamma=\cup_{t\in F}\alpha_{t}$ . If $n+1\notin F,$ the condition
holds by assumption. If $F=\left\{ n+1\right\} $, then we have 
\[
\left\{ a_{n+1}\right\} \cup\left\{ a_{n+1}+v_{\alpha_{n+1}}^{\left(i\right)}:i\in\left[k\right]\right\} \subseteq D\subseteq C_{n+1}^{*}.
\]
So, let assume $\left\{ n+1\right\} \subsetneq F$, let $H=F$ \textbackslash$\left\{ n+1\right\} $,
and let $\mu=\cup_{t\in F}\alpha_{t}$ . Then $a_{n+1}\in D\subseteq-\sum_{t\in H}a_{t}+C_{m}^{*}$,
where $m=minH$.

Let $\gamma=\sum_{t\in H}\alpha_{t}$ and let $i\in\left[k\right]$
. Then $a_{n+1}+q_{\alpha_{n+1}}^{\left(i,\gamma\right)}\in D\subseteq-\left(\sum_{t\in H}a_{t}+v_{\gamma}^{\left(i\right)}\right)+C_{m}^{*}$
, $m=minH$ 

and so $\left(\sum_{t\in H}a_{t}+v_{\gamma}^{\left(i\right)}\right)+\left(a_{n+1}+q_{\alpha_{n+1}}^{\left(i,\gamma\right)}\right)\in C_{m}^{*}$
. That is 
\[
\sum_{t\in F}a_{t}+v_{\mu}^{\left(i\right)}=\left(\sum_{t\in H}a_{t}+a_{n+1}\right)+\left(v_{\gamma}^{\left(i\right)}+q_{\alpha_{n+1}}^{\left(i,\gamma\right)}\right)\in C_{m}^{*}\subseteq C_{m}.
\]
\end{proof}
\begin{thm}
Let $\left(S,+\right)$ be a commutative adequate partial semigroup
and let $\left(C_{n}\right)_{n=1}^{\infty}$be sequence of central
sets where $C_{n}\subseteq S$. Suppose that 
\[
\left\{ \left(v_{\alpha}^{\left(i\right)}\right)_{\alpha\in\mathcal{P}_{f}\left(\mathbb{N}\right)}:i\in\left[k\right]\right\} 
\]
is an adequate set of VIP systems. Then there exists an IP ring $\mathcal{F}^{\left(1\right)}$
and an IP system $\left(b_{\alpha}\right)_{\alpha\in\mathcal{F}^{\left(1\right)}}$
in $S$ such that $\mathcal{F}^{\left(1\right)}=FU\left(\left(\alpha_{n}\right)_{n=1}^{\infty}\right)$
where $\left(\alpha_{n}\right)_{n=1}^{\infty}$ is a sequence of members
of $\mathcal{P}_{f}\left(\mathbb{N}\right)$ such that $max\alpha_{n}<min\alpha_{n+1}$
for all $n\in\mathbb{N}$ and for all $\alpha\in\mathcal{F}^{\left(1\right)}$,
where $\alpha=\cup_{t\in F}\alpha_{t}$, $F\in\mathcal{P}_{f}\left(\mathbb{N}\right)$
and $minF=m$, $\left\{ b_{\alpha},b_{\alpha}+v_{\alpha}^{\left(i\right)},...,b_{\alpha}+v_{\alpha}^{\left(k\right)}\right\} \subseteq C_{m}$.
\end{thm}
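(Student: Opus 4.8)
The plan is to read this theorem as a repackaging of Theorem \ref{main} in the language of IP rings and IP systems, so that essentially no new combinatorial work is needed once that theorem is in hand. The first thing I would do is pin down the hypothesis: since each $C_n$ is central there is a minimal idempotent witnessing it, and the clause ``sequence of central sets'' is to be understood as asserting that the whole sequence $(C_n)_{n=1}^{\infty}$ lies in one common idempotent $p\in K(\delta S)$, which is exactly the setting of Theorem \ref{main}. Reconciling the loose phrasing with this common-idempotent requirement is the one genuinely delicate point, because distinct central sets may arise from distinct (even incomparable) idempotents and then the machinery of Theorem \ref{main} simply does not apply; I would therefore make the common-$p$ assumption explicit at the outset.

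Granting this, I would invoke Theorem \ref{main} to produce sequences $(a_n)_{n=1}^{\infty}$ in $S$ and $(\alpha_n)_{n=1}^{\infty}$ in $\mathcal{P}_f(\mathbb{N})$ with $\alpha_n<\alpha_{n+1}$ for every $n$ such that, writing $\gamma=\cup_{t\in F}\alpha_t$ and $m=\min F$,
\[
\left\{\sum_{t\in F}a_t\right\}\cup\left\{\sum_{t\in F}a_t+v_\gamma^{(i)}:1\le i\le k\right\}\subseteq C_m
\]
for every $F\in\mathcal{P}_f(\mathbb{N})$. Since $\alpha_n<\alpha_{n+1}$ means $\max\alpha_n<\min\alpha_{n+1}$, the set $\mathcal{F}^{(1)}=FU((\alpha_n)_{n=1}^{\infty})$ is an IP ring, as required by the statement.

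Next I would construct the candidate IP system. Because the generating blocks $\alpha_n$ are pairwise disjoint and satisfy $\max\alpha_n<\min\alpha_{n+1}$, every $\alpha\in\mathcal{F}^{(1)}$ admits a \emph{unique} representation $\alpha=\cup_{t\in F}\alpha_t$ with $F\in\mathcal{P}_f(\mathbb{N})$, so I may unambiguously set $b_\alpha=\sum_{t\in F}a_t$. This partial sum is legitimate precisely because Theorem \ref{main} places it inside $C_m$, so $b_\alpha$ is well defined on all of $\mathcal{F}^{(1)}$. To verify that $(b_\alpha)_{\alpha\in\mathcal{F}^{(1)}}$ is an IP system I would take disjoint $\alpha,\beta\in\mathcal{F}^{(1)}$, say $\alpha=\cup_{t\in F}\alpha_t$ and $\beta=\cup_{t\in G}\alpha_t$; disjointness of $\alpha$ and $\beta$ forces $F\cap G=\emptyset$, and then $b_{\alpha\cup\beta}=\sum_{t\in F\cup G}a_t=b_\alpha+b_\beta$ by commutativity and associativity of the partial operation, all the sums involved being defined by Theorem \ref{main}.

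Finally, the required containment is immediate: for $\alpha=\cup_{t\in F}\alpha_t$ with $m=\min F$ one has $\gamma=\alpha$, hence $v_\gamma^{(i)}=v_\alpha^{(i)}$, and the displayed conclusion of Theorem \ref{main} reads exactly $\{b_\alpha\}\cup\{b_\alpha+v_\alpha^{(i)}:1\le i\le k\}\subseteq C_m$. Thus the only real obstacle is the bookkeeping just described---the common-idempotent reading of the hypothesis, the unique-representation fact that makes $b_\alpha$ well defined, and the check that the partial sums are genuinely defined---since the combinatorial substance has already been discharged by Theorem \ref{main}.
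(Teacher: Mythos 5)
Your proposal is correct and follows exactly the paper's own route: the paper's entire proof consists of choosing $\left(a_{n}\right)_{n=1}^{\infty}$ and $\left(\alpha_{n}\right)_{n=1}^{\infty}$ as guaranteed by Theorem \ref{main} and setting $b_{\alpha}=\sum_{t\in F}a_{t}$, which is precisely your construction. Your additional care --- reading the hypothesis as requiring a single idempotent $p\in K\left(\delta S\right)$ containing all the $C_{n}$, checking well-definedness of $b_{\alpha}$ via unique representation, and verifying the IP property $b_{\alpha\cup\beta}=b_{\alpha}+b_{\beta}$ --- fills in details the paper leaves implicit, and your flag about the common-idempotent reading is a legitimate gap in the paper's statement rather than in your argument.
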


\begin{proof}
Choose $\left(a_{n}\right)_{n=1}^{\infty}$ and $\left(\alpha_{n}\right)_{n=1}^{\infty}$as
in theorem 4.6. Put $b_{\alpha}=\sum_{t\in F}a_{t}$. 
\end{proof}
\begin{thm}
Let $\left(S,+\right)$ be a commutative adequate partial semigroup
and let $C_{n}\subseteq S$, $n\in\mathbb{N}$ be central sets. Suppose
that $\left\{ \left(v_{\alpha}^{\left(i\right)}\right)_{\alpha\in\mathcal{P}_{f}\left(\mathbb{N}\right)}:i\in\left[k\right]\right\} $
is an adequate set of VIP systems. Then there exists sequences $\left(a_{n}\right)_{n=1}^{\infty}$in
$S$ and $\left(\alpha_{n}\right)_{n=1}^{\infty}$ in $\mathcal{P}_{f}\left(\mathbb{N}\right)$
such that $\alpha_{n}<\alpha_{n+1}$ for each $n$ and such that for
every $F\in\mathcal{P}_{f}\left(\mathbb{N}\right)$, $\sum_{t\in F}a_{t}\in C_{m}$
where $m=min$F and if $\beta_{1}<\beta_{2}<...<\beta_{s}$, where
each $\beta_{j}\subseteq F$ and $i_{1},i_{2},...,i_{s}\in\left\{ 1,2,...,k\right\} $
then writing $\gamma_{j}=\cup_{t\in\beta_{j}}\alpha_{t}$ for $j\in\left\{ 1,2,...,s\right\} $we
have $\sum_{t\in F}a_{t}+\sum_{j=1}^{s}v_{\gamma_{j}}^{\left(i_{j}\right)}\in C_{m}$.
\end{thm}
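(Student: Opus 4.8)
The plan is to reprove Theorem~\ref{main} with an induction hypothesis strengthened to carry sums of several VIP increments at once rather than a single one. As there, I would first replace $C_n$ by $\bigcap_{i=1}^{n}C_i$ to assume $C_{n+1}\subseteq C_n$, fix the idempotent $p\in K(\delta S)$ with each $C_n\in p$, and put $C_n^{*}=\{x\in C_n:-x+C_n\in p\}$, so that $C_n^{*}\in p$ and $-x+C_n^{*}\in p$ whenever $x\in C_n^{*}$ by Lemma~\ref{lem1}. The sequences $(a_n)$ and $(\alpha_n)$ are built one term at a time, maintaining at stage $n$: $\alpha_1<\cdots<\alpha_n$, and for every nonempty $F\subseteq[n]$ with $m=\min F$, every family of disjoint nonempty blocks $\beta_1<\cdots<\beta_s$ with each $\beta_j\subseteq F$, and every $i_1,\dots,i_s\in[k]$, one has $\sum_{t\in F}a_t+\sum_{j=1}^{s}v_{\gamma_j}^{(i_j)}\in C_m^{*}$, where $\gamma_j=\bigcup_{t\in\beta_j}\alpha_t$; the case $s=0$ is the bare statement $\sum_{t\in F}a_t\in C_m^{*}$. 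Note that $\alpha_1<\cdots<\alpha_n$ forces $\gamma_1<\cdots<\gamma_s$, the ordering required by the VIP-combination results quoted above.

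The base step and the extraction of each new term proceed exactly as in Theorem~\ref{main}: each $C_m^{*}$ is piecewise syndetic as a member of an idempotent in $K(\delta S)$, and one feeds the appropriate translate-family into the theorem that such families are adequately partition regular together with the theorem that a piecewise syndetic set contains a member of a shift invariant adequately partition regular family. The single new point in the inductive step is that when $n+1\in F$, the index $n+1$ is the largest element of $F$, hence can occur only in the final block $\beta_s$. Writing $H=F\setminus\{n+1\}$, $\beta_s'=\beta_s\setminus\{n+1\}$ and $\gamma'=\bigcup_{t\in\beta_s'}\alpha_t$, and using the increment $q_{\alpha_{n+1}}^{(i,\gamma')}=v_{\gamma'\cup\alpha_{n+1}}^{(i)}-v_{\gamma'}^{(i)}$ supplied by the VIP-combination theorem, the target sum factors as
\[
\Bigl(\sum_{t\in H}a_t+\sum_{j=1}^{s-1}v_{\gamma_j}^{(i_j)}+v_{\gamma'}^{(i_s)}\Bigr)+\Bigl(a_{n+1}+q_{\alpha_{n+1}}^{(i_s,\gamma')}\Bigr).
\]
The first bracket is a multi-block expression over $H\subseteq[n]$, with ordered blocks $\beta_1<\cdots<\beta_{s-1}<\beta_s'$ and the same colors, so by the induction hypothesis it lies in $C_m^{*}$ with $m=\min H=\min F$ (when $H=\emptyset$, i.e.\ $F=\{n+1\}$, the increment is the raw $v_{\alpha_{n+1}}^{(i_s)}$ and is handled as in Theorem~\ref{main}).

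Accordingly I would take $D$ to be $C_{n+1}^{*}$ intersected with all sets $-w+C_m^{*}$, where $w$ runs over the finitely many admissible first brackets coming from nonempty $H\subseteq[n]$, ordered blocks inside $H$, and colors in $[k]$. This is a finite intersection of members of $p$, so $D\in p$ and $D$ is piecewise syndetic. I would then form, as in Theorem~\ref{main}, the family $\mathcal{B}$ whose members are $\{a\}\cup\{a+v_\alpha^{(i)}:i\in[k]\}\cup\bigcup_{\gamma\in FU((\alpha_t)_{t=1}^{n})}\{a+q_\alpha^{(i,\gamma)}:i\in[k]\}$ for suitable $a$ and $\alpha<\alpha_n$; since the increments $\{(q_\beta^{(i,\gamma)})\}\cup\{(v_\beta^{(i)})\}$ form an adequate set of VIP systems by the VIP-combination theorem, $\mathcal{B}$ is adequately partition regular (and trivially shift invariant), and the piecewise-syndetic theorem produces $\alpha_{n+1}>\alpha_n$ and $a_{n+1}$ with the whole block of sums inside $D$. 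Substituting these into the displayed factorization and using the inclusion $a_{n+1}+q_{\alpha_{n+1}}^{(i_s,\gamma')}\in D\subseteq -w+C_m^{*}$ closes the induction.

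The main obstacle, beyond the bookkeeping of which block can contain $n+1$, is to confirm that after enlarging $D$ to intersect over \emph{all} multi-block shifts $w$ one can still realize every increment $q_{\alpha_{n+1}}^{(i,\gamma')}$ (and every $v_{\alpha_{n+1}}^{(i)}$) simultaneously inside $D$. This is exactly what the VIP-combination theorem delivers: the single-block increments attached to the elements of $FU((\alpha_t)_{t=1}^{n})$ already form an adequate set of VIP systems, so their translate-family is adequately partition regular and is forced into the piecewise syndetic set $D$; because $n+1$ enters only the last block, no increment beyond these single-block $q$'s is ever needed, and the finiteness of $[n]$ keeps $D$ a legitimate finite intersection of members of $p$. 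The adequacy conditions on the VIP systems guarantee that all the sums occurring above are defined.
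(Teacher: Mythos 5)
Your proposal is correct and follows essentially the same route as the paper's own proof: the paper likewise strengthens induction hypothesis $(2)$ of Theorem \ref{main} to require the multi-block sums $\sum_{t\in F}a_t+\sum_{j=1}^{s}v_{\gamma_j}^{(i_j)}$ to lie in $C_m^{*}$, and enlarges $D$ to include the translates $-\left(\sum_{t\in H}a_t+\sum_{j=1}^{s}v_{\gamma_j}^{(i_j)}\right)+C_m^{*}$ over all admissible $H\subseteq\left[n\right]$, blocks, and colors, exactly as you do. The only difference is that you actually carry out the closing verification (in particular the observation that $n+1$ can occur only in the last block $\beta_s$, giving the factorization through $q_{\alpha_{n+1}}^{(i_s,\gamma')}$), whereas the paper omits this step as being ``quite similar'' to the proof of Theorem \ref{main}.
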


\begin{proof}
To prove we will modify the induction hypothesis $\left(2\right)$
of the proof of theorem 3.11 by $\left(2\right)$ for $\phi\neq F\subseteq\left[n\right]$,
$minF=m$ ,$\sum_{t\in F}a_{t}\in C_{m}^{*}$ and if $\beta_{1}<\beta_{2}<...<\beta_{s}$,
where each $\beta_{j}\subseteq F$ and $i_{1},i_{2},...,i_{s}\in\left[k\right]$
and for $j\in\left\{ 1,2,...,s\right\} $$\gamma_{j}=\cup_{t\in\beta_{j}}\alpha_{t}$
then $\sum_{t\in F}a_{t}+\sum_{j=1}^{s}v_{\gamma_{j}}^{\left(i_{j}\right)}\in C_{m}^{*}$.
We have to change the set $D$ in the proof of theorem 3.11 by
\[
D=C_{n+1}^{*}\bigcap\cap_{m=1}^{n}\left[\begin{array}{c}
\cap\left\{ -\sum_{t\in H}a_{t}+C_{m}^{*}:\phi\neq H\subseteq\left[n\right],minH=m\right\} \bigcap\\
\left\{ \begin{array}{c}
-\left(\sum_{t\in H}a_{t}+\sum_{j=1}^{s}v_{\gamma_{j}}^{\left(i_{j}\right)}\right)+C_{m}^{*}:\\
\begin{array}{c}
\phi\neq H\subseteq\left[n\right],minH=m,s\in\mathbb{N},\beta_{1}<\beta_{2}<...<\beta_{s},\\
\cup_{j=1}^{s}\beta_{j}\subseteq H\text{ }and\text{ }forj\in\left[s\right],\gamma_{j}=\cup_{t\in\beta_{j}}\alpha_{t}
\end{array}
\end{array}\right\} 
\end{array}\right]
\]
rest of the proof is quite similar to the proof of theorem 3.11 so
we skip that part. Here we speak few words about weak VIP systems.
If $S$ be a commutative and cancellative semigroup then $S$ can
be embedded in a group this group is called group of quotients. 
\end{proof}
\begin{defn}
Let $\left(S,+\right)$ be a commutative cancellative semigroup and
let $G$ be the group of quotients of $S$. A sequence $\left(v_{\alpha}\right)_{\alpha\in\mathcal{P}_{f}\left(\mathbb{N}\right)}$
in $S$ is called a weak VIP systems if it is a VIP system in $G.$
\end{defn}

\begin{cor}
Let $\left(S,+\right)$ be a commutative cancellative semigroup and
let $C_{n}\subseteq S$, $n\in\mathbb{N}$ be central sets, and let
$\left\{ \left(v_{\alpha}^{\left(i\right)}\right)_{\alpha\in\mathcal{P}_{f}\left(\mathbb{N}\right)}:i\in\left[k\right]\right\} $
be a set of weak VIP systems in S. Then there exists sequences $\left(a_{n}\right)_{n=1}^{\infty}$in
$S$ and $\left(\alpha_{n}\right)_{n=1}^{\infty}$ in $\mathcal{P}_{f}\left(\mathbb{N}\right)$
such that $\alpha_{n}<\alpha_{n+1}$ for each $n$ and such that for
every $F\in\mathcal{P}_{f}\left(\mathbb{N}\right)$ and every $i\in\left[k\right]$,
if $\gamma=\cup_{t\in F}\alpha_{t}$, then $\sum_{t\in F}a_{t}+v_{\gamma}^{\left(i\right)}\in C_{m}$
,where $m=\min F$ .
\end{cor}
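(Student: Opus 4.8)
The plan is to reduce to the group of quotients and then invoke Theorem \ref{main}. Let $G$ denote the group of quotients of the commutative cancellative semigroup $S$, so that $S$ embeds as a subsemigroup of the abelian group $G$ and every element of $G$ has the form $a-b$ with $a,b\in S$. Since $G$ is a group its partial operation is total, whence $\delta G=\beta G$ and $\sigma(H)=G$ for every $H\in\mathcal{P}_f(G)$. By the definition of a weak VIP system each $\left(v_\alpha^{(i)}\right)_{\alpha\in\mathcal{P}_f(\mathbb{N})}$ is a genuine VIP system in $G$, and because the operation on $G$ is total one checks that $\{(v_\alpha^{(i)}):i\in[k]\}$ is automatically an adequate set of VIP systems in $G$: taking $t=k$, $n_\gamma^{(i)}=m_\gamma^{(i)}$ (the generating functions), $u^{(i)}=v^{(i)}$ and $E_i=\{i\}$, conditions $(1)$ and $(3)$ hold trivially, while condition $(2)$ holds automatically because every relevant sum lies in $G=\sigma(H)$.

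The one substantive point, and the step I expect to be the main obstacle, is transferring centrality from $S$ to $G$. Let $p$ be a minimal idempotent of $\delta S=\beta S$ witnessing the centrality of the $C_n$ (as in Theorem \ref{main} we take the $C_n$ to be members of a single such $p$). Identifying $\beta S$ with the closed subsemigroup $\overline{S}=\mathrm{cl}_{\beta G}(S)$ of $\beta G$, it suffices to show $p\in K(\beta G)$. First I would prove that $S$ is thick, hence piecewise syndetic, in $G$: given a finite set $F=\{a_1-b_1,\dots,a_k-b_k\}\subseteq G$ with all $a_i,b_i\in S$, the translate $x=b_1+\cdots+b_k\in S$ satisfies $x+(a_i-b_i)=a_i+\sum_{j\neq i}b_j\in S$, so $x+F\subseteq S$. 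Consequently $\overline{S}\cap K(\beta G)\neq\emptyset$, and by the standard fact that a closed subsemigroup meeting the smallest ideal satisfies $K(\overline{S})=\overline{S}\cap K(\beta G)$, every minimal idempotent of $\overline{S}$ lies in $K(\beta G)$. In particular $p\in K(\beta G)$, so each $C_n$ is central in $G$.

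With this in hand the conclusion follows by applying Theorem \ref{main} to the commutative adequate partial semigroup $G$, the idempotent $p\in K(\delta G)$, the sequence $(C_n)$ of members of $p$, and the adequate set of VIP systems $\{(v_\alpha^{(i)}):i\in[k]\}$. This yields sequences $(a_n)_{n=1}^\infty$ in $G$ and $(\alpha_n)_{n=1}^\infty$ in $\mathcal{P}_f(\mathbb{N})$ with $\alpha_n<\alpha_{n+1}$ such that, for every $F$ with $m=\min F$ and $\gamma=\cup_{t\in F}\alpha_t$, one has $\{\sum_{t\in F}a_t\}\cup\{\sum_{t\in F}a_t+v_\gamma^{(i)}:i\in[k]\}\subseteq C_m$. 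Taking $F=\{n\}$ forces $a_n\in C_n\subseteq S$, so the sequence $(a_n)$ already lies in $S$ as required, and the inclusion $\sum_{t\in F}a_t+v_\gamma^{(i)}\in C_m$ is precisely the assertion of the corollary. The only care needed is the bookkeeping to see that the base points are forced into $C_m\subseteq S$; everything else is a direct specialization of Theorem \ref{main}.
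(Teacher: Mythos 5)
Your proposal is correct and follows essentially the same route as the paper: pass to the group of quotients $G$, show $S$ is thick (hence piecewise syndetic) in $G$ via the translate $x=\sum_i b_i$, use $\overline{S}\cap K(\beta G)\neq\emptyset$ and $K(\beta S)=\overline{S}\cap K(\beta G)$ to transfer the minimal idempotent $p$ (and hence centrality of the $C_n$) to $G$, observe that the weak VIP systems become an adequate set of VIP systems in $G$, and apply Theorem \ref{main}, recovering $a_n\in S$ from $a_n\in C_n\subseteq S$. Your extra detail verifying adequacy of the VIP systems in $G$ (taking $t=k$, $n_\gamma^{(i)}=m_\gamma^{(i)}$, $E_i=\{i\}$) only fills in a step the paper asserts without proof.
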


\begin{proof}
Let $G$ be the group of quotients of $S.$Then, with substraction
in $G$, we have $G=\left\{ a-b:a,b\in S\right\} $. We claim that
$S$ is piecewise syndetic in $G.$ That is there exists $H\in\mathcal{P}_{f}\left(G\right)$
such that for each $F\in\mathcal{P}_{f}\left(G\right)$, there exists
$x\in G$ such that $F+x\subseteq\cup_{t\in H}\left(-t+S\right)$.
Indeed , let $H=\left\{ 0\right\} $ and let $F\in\mathcal{P}_{f}\left(G\right)$
be given. Pick $l\in\mathbb{N}$ and 
\[
a_{1},a_{2},...,a_{l},b_{1},b_{2},...,b_{l}
\]
 in $S$ such that $F=\left\{ a_{i}-b_{i}:i\in\left[l\right]\right\} $.
Let $x=\sum_{i=1}^{l}b_{i}$. Then $F+x\subseteq S=-0+S.$ Since $S$
is piecewise syndetic, $\overline{S}\cap K\left(\beta G\right)\neq\phi$
by $\left[\left[5\right],Theo\text{rem }4.40\right]$ and consequently
$K\left(\beta S\right)=\overline{S}\cap K\left(\beta G\right)$ by
$\text{\ensuremath{\left[\text{\ensuremath{\left[5\right]}},Theorem\text{ }1.65\right]}. }$Since
$C_{n}$ are central in S, by definition there is some idempotent
$p\in K\left(\beta S\right)$ such that $C_{n}\in P.$ But then $p\in K\left(\beta G\right)$
and thus $C_{n}$ are central in G. Also, for each $i\in\left[k\right]$,
$\left(v_{\alpha}^{\left(i\right)}\right)_{\alpha\in\mathcal{P}_{f}\left(\mathbb{N}\right)}$
is a weak VIP system in $S$ and is therefore a VIP system in $G$.
Thus, $\left\{ \left(v_{\alpha}^{\left(i\right)}\right)_{\alpha\in\mathcal{P}_{f}\left(\mathbb{N}\right)}:i\in\left[k\right]\right\} $
is an adequate set of VIP systems in $G$ so by theorem3.11, there
exists sequences $\left(a_{n}\right)_{n=1}^{\infty}$in G and $\left(\alpha_{n}\right)_{n=1}^{\infty}$
in $\mathcal{P}_{f}\left(\mathbb{N}\right)$ such that $\alpha_{n}<\alpha_{n+1}$
for each $n$ and such that for every $F\in\mathcal{P}_{f}\left(\mathbb{N}\right)$,
if $\gamma=\cup_{t\in F}\alpha_{t}$, then $\left\{ \sum_{t\in F}a_{t}\right\} \cup\left\{ \sum_{t\in F}a_{t}+v_{\gamma}^{\left(i\right)}:i\in\left[k\right]\right\} \subseteq C_{m}.$
In particular, each $a_{t}$ is in $C_{m}\subseteq S$ so $\left(a_{n}\right)_{n=1}^{\infty}$
is a sequence in S as required.
\end{proof}
Here we present the ``VIP-Free'' version of Theorem \ref{main}
and a similar proof. 
\begin{thm}
Let $\left(S,+\right)$ be a commutative adequate partial semigroup
and let $U$ be a set, and for each $s\in U$, let $T_{s}$ be a set.
For each $s\in U$ and each $t\in T_{s}$, let $A_{s,t}\in\mathcal{P}_{f}\left(S\right),$such
that the family $\mathcal{A}_{s}=\left\{ A_{s,t}:t\in T_{s}\right\} $
is shift invariant and adequately partition regular. Let $s_{1}\in U$
and suppose $\phi:\cup_{s\in U}\left(\left\{ s\right\} \times T_{s}\right)\to U$
is a function. If $C_{n}\in S,n\in\mathbb{N}$ is a sequence of central
set then there exists sequences $\left(s_{n}\right)_{n=2}^{\infty}$
in $U$ and $\left(t_{n}\right)_{n=1}^{\infty}$ with each $t_{n}\in T_{s_{n}}$
such that 

$\phi\left(s_{n-1},t_{n-1}\right)=s_{n}$ for $n\geq2$ and such that
if $n_{1}<n_{2}<...<n_{m}$ and for each $i\in\left\{ 1,2,...,m\right\} ,x_{n_{i}}\in A_{s_{n_{i}},t_{n_{i}}}$,
then 
\[
\left(x_{n_{1}}+x_{n_{2}}+...+x_{n_{m}}\right)\in C_{n_{1}}\left(\text{the sum is defined}\right).
\]
\end{thm}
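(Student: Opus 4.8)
The plan is to mirror the induction in the proof of Theorem~\ref{main}, with the abstract families $\mathcal{A}_s$ playing the role previously played by the VIP-generated families and with the transition function $\phi$ used only to pin down the next index before each application of the partition-regularity tool. First I would reduce to the case $C_{n+1}\subseteq C_n$ (replacing $C_n$ by $\bigcap_{i=1}^n C_i$) and fix an idempotent $p\in K(\delta S)$ with $C_n\in p$ for every $n$, so that each $C_n$ is central with respect to the common $p$. For each $n$ set $C_n^{*}=\{x\in C_n:-x+C_n\in p\}$; then $C_n^{*}\in p$, and by Lemma~\ref{lem1} one has $-x+C_n^{*}\in p$ whenever $x\in C_n^{*}$. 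The key structural observation, valid in the partial-semigroup setting, is that $-y+C_m^{*}=y^{-1}(C_m^{*})\subseteq\varphi(y)$, so membership of an element in $-y+C_m^{*}$ simultaneously guarantees that $y+(\cdot)$ is \emph{defined} and that the result lands in $C_m^{*}$; this is what will keep every partial sum below defined.

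I would build $(s_n)_{n\ge 2}$ and $(t_n)_{n\ge 1}$ by induction, maintaining the hypothesis: $\phi(s_{j-1},t_{j-1})=s_j$ for $2\le j\le n$, and for every $\emptyset\neq F\subseteq[n]$ with $m=\min F$ and every choice $x_j\in A_{s_j,t_j}$ $(j\in F)$, the sum $\sum_{j\in F}x_j$ is defined and lies in $C_m^{*}$. For the base case $n=1$, the set $C_1^{*}\in p\subseteq K(\delta S)$ is piecewise syndetic, and since $\mathcal{A}_{s_1}$ is shift invariant and adequately partition regular, the theorem of \cite{key-5} guaranteeing that a piecewise syndetic set contains a member of such a family yields $t_1\in T_{s_1}$ with $A_{s_1,t_1}\subseteq C_1^{*}$, establishing the hypothesis for $F=\{1\}$.

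For the inductive step, given $s_1,\dots,s_n$ and $t_1,\dots,t_n$ I would \emph{force} $s_{n+1}=\phi(s_n,t_n)$ and then choose $t_{n+1}$ by feeding the correct target set into the partition-regularity theorem. That target is
\[
D=C_{n+1}^{*}\cap\bigcap\Big\{-\textstyle\sum_{j\in H}x_j+C_m^{*}:\ \emptyset\neq H\subseteq[n],\ m=\min H,\ x_j\in A_{s_j,t_j}\ (j\in H)\Big\}.
\]
Because each $A_{s_j,t_j}$ is finite this is a finite intersection, and the induction hypothesis gives $y=\sum_{j\in H}x_j\in C_m^{*}$ for each term, whence $-y+C_m^{*}\in p$ by Lemma~\ref{lem1}; therefore $D\in p$ and $D$ is piecewise syndetic. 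Applying the same partition-regularity theorem to the shift-invariant, adequately partition regular family $\mathcal{A}_{s_{n+1}}$ produces $t_{n+1}\in T_{s_{n+1}}$ with $A_{s_{n+1},t_{n+1}}\subseteq D$. To verify the hypothesis for $[n+1]$ I split on whether $n+1\in F$: if not, it is immediate; if $F=\{n+1\}$, then $A_{s_{n+1},t_{n+1}}\subseteq D\subseteq C_{n+1}^{*}$; and if $\{n+1\}\subsetneq F$, setting $H=F\setminus\{n+1\}$ and $m=\min H=\min F$, the element $x_{n+1}\in A_{s_{n+1},t_{n+1}}\subseteq-(\sum_{j\in H}x_j)+C_m^{*}$ shows that $\sum_{j\in H}x_j+x_{n+1}=\sum_{j\in F}x_j$ is defined and lies in $C_m^{*}$.

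Finally, the constructed sequences satisfy $\phi(s_{n-1},t_{n-1})=s_n$, and for any $n_1<\dots<n_m$ with $x_{n_i}\in A_{s_{n_i},t_{n_i}}$, applying the maintained hypothesis to $F=\{n_1,\dots,n_m\}$ gives $x_{n_1}+\dots+x_{n_m}\in C_{n_1}^{*}\subseteq C_{n_1}$, with the sum defined, as required. I expect the only genuine points needing care to be (i) securing a single idempotent $p$ valid for the whole sequence $(C_n)$, which is exactly the sense in which ``central'' is used throughout this paper, and (ii) tracking definedness of the iterated partial sums; both are handled by the observation $-y+C_m^{*}\subseteq\varphi(y)$. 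The presence of $\phi$ adds no real difficulty, since $s_{n+1}$ is simply determined before $\mathcal{A}_{s_{n+1}}$ is invoked.
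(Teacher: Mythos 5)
Your proposal is correct and takes essentially the same approach as the paper: both modify the induction of Theorem~\ref{main}, replacing the adequately partition regular family $\mathcal{B}$ by $\mathcal{A}_{s_n}$, replacing $D$ by the intersection of a $C^{*}$ set with the shifts $-\left(x_{n_1}+\cdots+x_{n_m}\right)+C_{m}^{*}$ over all prior choices, invoking the theorem that a piecewise syndetic set contains a member of a shift-invariant adequately partition regular family, and then setting $s_{n+1}=\phi\left(s_n,t_n\right)$. Your write-up merely makes explicit what the paper leaves implicit (the base case, definedness of the partial sums via $-y+C_m^{*}\subseteq\varphi\left(y\right)$, and the common-idempotent reading of ``central''), and your index bookkeeping is in fact cleaner than the paper's, which writes $C_{n+1}^{*}$ at the stage where $t_n$ is being chosen.
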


\begin{proof}
The proof of Theorem \ref{main} is modified. Having choosen $\left(s_{i}\right)_{i=1}^{n}$
and $\left(t_{i}\right)_{i=1}^{n-1}$, replace the adequately partition
regular family $\mathcal{B}$ constructed in the proof of Theorem
\ref{main} by $\mathcal{A}_{s_{n}}$ and replace the piecewise syndetic
set $D$ by

$\hat{D}=C_{n+1}^{*}\cap\cap\left[\left\{ \begin{array}{c}
-\left(x_{n_{1}}+x_{n_{2}}+...+x_{n_{m}}\right)+C_{n_{1}}^{*}:\\
n_{1}<n_{2}<...<n_{m}<n\text{ and each \ensuremath{x_{n_{i}}\in A_{s_{n_{i}},t_{n_{i}}}}}
\end{array}\right\} \right]$

Then one chooses $t_{n}$ so that $A_{s_{n},t_{n}}\subseteq\hat{D}$
and let $s_{n+1}=\phi\left(s_{n},t_{n}\right).$
\end{proof}

\section{APPLICATION}

Here we briefly discuss about the application of Theorem \ref{main}$.$ 
\begin{thm}
Let $\left(S,+\right)$ be a commutative semigroup and let $C_{n}\subseteq S$,
$n\in\mathbb{N}$ be central sets. Suppose that $\left\{ \left(v_{\alpha}^{\left(i\right)}\right)_{\alpha\in\mathcal{P}_{f}\left(\mathbb{N}\right)}:i\in\left[k\right]\right\} $
is a set of IP systems. Then there exists sequences $\left(a_{n}\right)_{n=1}^{\infty}$in
$S$ and $\left(\alpha_{n}\right)_{n=1}^{\infty}$ in $\mathcal{P}_{f}\left(\mathbb{N}\right)$
such that $\alpha_{n}<\alpha_{n+1}$ for each $n$ and such that for
every $F\in\mathcal{P}_{f}\left(\mathbb{N}\right)$, $\sum_{t\in F}a_{t}\in C_{m}$
where $m=min$F and if $\beta_{1}<\beta_{2}<...<\beta_{s}$, where
each $\beta_{j}\subseteq F$ and $i_{1},i_{2},...,i_{s}\in\left\{ 1,2,...,k\right\} $
then writing $\gamma_{j}=\cup_{t\in\beta_{j}}\alpha_{t}$ for $j\in\left\{ 1,2,...,s\right\} $
we have $\sum_{t\in F}a_{t}+\sum_{j=1}^{s}v_{\gamma_{j}}^{\left(i_{j}\right)}\in C_{m}$.
\end{thm}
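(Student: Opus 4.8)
The plan is to recognize this statement as the specialization of the strengthened VIP-systems result established just after Theorem \ref{main} (the version carrying the enriched conclusion $\sum_{t\in F}a_t+\sum_{j=1}^s v_{\gamma_j}^{(i_j)}\in C_m$) to the case of a \emph{total} commutative semigroup together with a family of IP systems. Two reductions carry the argument. First, every commutative semigroup $(S,+)$ is a commutative adequate partial semigroup: the operation is everywhere defined, so $\varphi(s)=S$ for each $s$, hence $\sigma(H)=S$ for every $H\in\mathcal{P}_f(S)$, the semigroup is adequate, $\delta S=\beta S$, and every sequence in $S$ is adequate. In particular the notion of central used in the earlier theorems (membership in an idempotent of $K(\delta S)$) coincides with the ordinary notion of central in $S$. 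Second, an IP system is precisely a VIP system of degree $1$.

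For the second reduction I would record the degree-$1$ computation: for pairwise disjoint $\alpha_0,\alpha_1$ the defining VIP relation collapses to $-(v_{\alpha_0}+v_{\alpha_1})+v_{\alpha_0\cup\alpha_1}=0$, i.e. $v_{\alpha_0\cup\alpha_1}=v_{\alpha_0}+v_{\alpha_1}$, which is exactly the additivity defining an IP system. Concretely, writing $v_\alpha^{(i)}=\sum_{n\in\alpha}x_n^{(i)}$, one takes $d=1$ and the generating data $m_{\{n\}}^{(i)}=x_n^{(i)}$ with $m_\emptyset^{(i)}=0$, so that $v_\alpha^{(i)}=\sum_{\gamma\subseteq\alpha,\,\gamma\in\mathcal{F}_1}m_\gamma^{(i)}$ holds for each $i\in[k]$.

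Next I would verify that $\{(v_\alpha^{(i)})_{\alpha}:i\in[k]\}$ is an \emph{adequate} set of VIP systems. Take $t=k$, $n_\gamma^{(i)}=m_\gamma^{(i)}$, and $E_i=\{i\}$; then conditions (1) and (3) of the definition of an adequate set of VIP systems hold by construction. The only condition that is not purely formal is (2), which requires certain finite sums $\sum_{i}\sum_{j}n_{\gamma_j}^{(i)}$ to lie in $\sigma(H)\cup\{0\}$; but since $\sigma(H)=S$ for every $H$ in a total semigroup, condition (2) is automatic, and the collection is adequate.

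With these reductions the conclusion follows immediately by applying the strengthened VIP theorem (the $\beta_j$-version proved after Theorem \ref{main}) to $(S,+)$, the central sets $(C_n)_{n=1}^{\infty}$, and the adequate set $\{(v_\alpha^{(i)})_{\alpha}:i\in[k]\}$: its output sequences $(a_n)$ and $(\alpha_n)$ are exactly those asserted here, and the memberships $\sum_{t\in F}a_t\in C_m$ and $\sum_{t\in F}a_t+\sum_{j=1}^s v_{\gamma_j}^{(i_j)}\in C_m$ transcribe verbatim. I expect no genuine obstacle, since every substantive ingredient (adequate partition regularity, piecewise syndeticity of the $C_n^{*}$, and the inductive choice of $a_{n+1}$ and $\alpha_{n+1}$) has already been carried out in the VIP setting; the present task is the bookkeeping of identifying IP systems as degree-$1$ VIP systems and observing that totality trivializes the adequacy hypothesis (2). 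The one point worth stating with care is that one should pass to $B_n=\bigcap_{i\le n}C_i$, or assume the $C_n$ lie in a common idempotent, so that the hypotheses of the invoked theorem are literally satisfied.
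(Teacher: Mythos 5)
Your proposal is correct and takes essentially the same route as the paper: the paper's entire proof is the observation that in a commutative semigroup any set of IP systems is an adequate set of VIP systems, whereupon the $\beta_j$-enriched theorem following Theorem \ref{main} (the paper's ``theorem 4.8'') applies. Your write-up simply supplies the details the paper leaves implicit --- IP systems as degree-$1$ VIP systems with generating data $m_{\{n\}}^{(i)}=x_n^{(i)}$, the choice $t=k$, $E_i=\{i\}$, and the fact that totality makes $\sigma(H)=S$ so adequacy condition $(2)$ is automatic --- together with the sensible caution about the $C_n$ sharing a common idempotent, a point the paper itself glosses over.
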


\begin{proof}
In a commutative semigroup any set of IP system is an adequate set
of VIP system, so theorem $4.8$ applies.
\end{proof}
\begin{defn}
Let $l\in\mathbb{N}$ , a set-monomial $\left(\text{over }\mathbb{N}^{l}\right)$
in the variable $X$ is an expression $m\left(X\right)=S_{1}\times S_{2}\times\ldots\times S_{l}$
, where for each $i\in\left\{ 1,2,\ldots,l\right\} $, $S_{i}$ is
either the symbol $X$ or a nonempty singleton subset of $\mathbb{N}$
(these are called coordinate coefficients). The degree of the monomial
is the number of times the symbol $X$ appears in the list $S_{1},\ldots,S_{l}$.
For example, taking $l=3$, $m\left(X\right)=\left\{ 5\right\} \times X\times X$
is a set-monomial of degree 2, while $m\left(X\right)=X\times\left\{ 17\right\} \times\left\{ 2\right\} $
is a set-monomial of degree 1. A\emph{ set-polynomial} is an expression
of the form $p\left(X\right)=m_{1}\left(X\right)\cup m_{2}\left(X\right)\cup\ldots\cup m_{k}\left(X\right)$,
where $k\in\mathbb{N}$ and $m_{1}\left(X\right)\cup m_{2}\left(X\right)\cup\ldots\cup m_{k}\left(X\right)$
are set-monomials. The degree of a set-polynomial is the largest degree
of its set-monomial ``summands'' , and its constant term consists
of the ``sum'' of those $m_{i}$ that are constant, i.e., of degree
zero. 
\end{defn}

\begin{lem}
Let $l\in\mathbb{N}$ and let $\mathcal{P}$ be a finite family of
set polynomial over 
\[
\left(\mathcal{P}_{f}\left(\mathbb{N}^{l}\right),+\right)
\]
 whose constant terms are empty. Then there exists $q\in\mathbb{N}$
and an IP ring $\mathcal{F}^{\left(1\right)}=\left\{ \alpha\in\mathcal{P}_{f}\left(\mathbb{N}\right):min\alpha>q\right\} $
such that $\left\{ \left(P\left(\alpha\right)\right)_{\alpha\in\mathcal{F}^{\left(1\right)}}:P\left(X\right)\in\mathcal{P}\right\} $
is an adequate set of VIP systems.
\end{lem}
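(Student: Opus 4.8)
The plan is to reduce the statement to the level of individual set-monomials and to read the VIP generating data off the ``support'' (the set of $X$-values) of each point. First I would enumerate all set-monomials that occur in the members of $\mathcal{P}$, say $m^{\left(1\right)},\dots,m^{\left(t\right)}$, and let $d$ be the largest of their degrees, so that $d$ is the degree of the family. I would then choose $q\in\mathbb{N}$ strictly larger than every integer appearing as a coordinate coefficient of some $m^{\left(s\right)}$, and set $\mathcal{F}^{\left(1\right)}=\left\{ \alpha\in\mathcal{P}_{f}\left(\mathbb{N}\right):\min\alpha>q\right\} $. The purpose of this choice is separation: for $\alpha\in\mathcal{F}^{\left(1\right)}$ every coordinate of a point of $m^{\left(s\right)}\left(\alpha\right)$ coming from an $X$-slot is $>q$, while every coordinate coming from a coefficient slot is $\le q$, so the two kinds of coordinate never collide and the set of $X$-values of a point is unambiguously determined.

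Next, writing $S=\mathcal{P}_{f}\left(\mathbb{N}^{l}\right)$, for each monomial $m^{\left(s\right)}$ and each $\gamma\in\mathcal{F}_{d}$ with $\gamma\subseteq\left\{ n:n>q\right\} $ I would define $n_{\gamma}^{\left(s\right)}$ to be the set of those points of $m^{\left(s\right)}\left(\gamma\right)$ whose $X$-slots take, between them, exactly the values in $\gamma$, and put $n_{\gamma}^{\left(s\right)}=\emptyset$ otherwise (here $\emptyset$ is the identity $0$ of $\left(S,\cup\right)$). A point-by-point bookkeeping on the $X$-values then yields, for every $\alpha\in\mathcal{F}^{\left(1\right)}$,
\[
m^{\left(s\right)}\left(\alpha\right)=\bigcup_{\gamma\subseteq\alpha,\ \mid\gamma\mid\le d}n_{\gamma}^{\left(s\right)},
\]
the pieces on the right being pairwise disjoint because they have distinct supports. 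Since the constant terms of the members of $\mathcal{P}$ are empty, every $m^{\left(s\right)}$ has degree at least $1$, whence the $\gamma=\emptyset$ contribution is $0$; thus each $u_{\alpha}^{\left(s\right)}:=m^{\left(s\right)}\left(\alpha\right)$ is a VIP system of degree $\le d$ generated by $\left(n_{\gamma}^{\left(s\right)}\right)_{\gamma\in\mathcal{F}_{d}}$.

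Finally I would assemble the data required by the definition of an adequate set of VIP systems, using the $t$ systems $u^{\left(s\right)}$ as the common generators. For each $P\in\mathcal{P}$ let $E_{P}\subseteq\left\{ 1,\dots,t\right\} $ be the set of indices of the monomials composing $P$, set $v_{\alpha}^{\left(P\right)}:=P\left(\alpha\right)$, and define its generating function by $m_{\gamma}^{\left(P\right)}:=\bigcup_{s\in E_{P}}n_{\gamma}^{\left(s\right)}$. Then $m_{\gamma}^{\left(P\right)}=\sum_{s\in E_{P}}n_{\gamma}^{\left(s\right)}$ is exactly condition $\left(3\right)$, and summing the displayed identity over $s\in E_{P}$ gives $v_{\alpha}^{\left(P\right)}=\bigcup_{\gamma\subseteq\alpha,\,\mid\gamma\mid\le d}m_{\gamma}^{\left(P\right)}$, which is condition $\left(1\right)$. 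For condition $\left(2\right)$ I would use that $\left(S,\cup\right)$ is a total commutative semigroup, so $\sigma\left(H\right)=S$ for every $H\in\mathcal{P}_{f}\left(S\right)$; any finite union of the sets $n_{\gamma}^{\left(s\right)}$ is again a finite subset of $\mathbb{N}^{l}$ and hence lies in $S\cup\left\{ 0\right\} $, so $\left(2\right)$ holds with any choice of the threshold (for instance $m=1$). This exhibits $\left\{ \left(P\left(\alpha\right)\right)_{\alpha\in\mathcal{F}^{\left(1\right)}}:P\in\mathcal{P}\right\} $ as an adequate set of VIP systems.

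The step I expect to be the main obstacle is the support-decomposition identity above: one must verify that defining $n_{\gamma}^{\left(s\right)}$ solely in terms of $\gamma$ (and not of the ambient $\alpha$) genuinely reproduces $m^{\left(s\right)}\left(\alpha\right)$ for every $\alpha\in\mathcal{F}^{\left(1\right)}$, with pairwise disjoint pieces whose supports have size $\le d$. This is precisely where the separation $\min\alpha>q$ is indispensable, since without distinguishing the $X$-values from the fixed coefficient values one could not attach a well-defined support to each point and the pieces $n_{\gamma}^{\left(s\right)}$ would fail to be independent of $\alpha$. Once this bookkeeping is in place, conditions $\left(1\right)$--$\left(3\right)$ follow as indicated.
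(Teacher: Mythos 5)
Your overall strategy --- decomposing each set-monomial $m^{(s)}(\alpha)$ according to the set of values taken in its $X$-slots, using $q$ to separate $X$-values from coordinate coefficients, and encoding each $P\in\mathcal{P}$ by the index set $E_{P}$ of its monomials --- is the right bookkeeping, and it is essentially the skeleton of the argument in the source to which the paper defers for this lemma (the paper gives no proof of its own; it simply cites \cite{key-5}, Lemma 4.3).

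However, there is a genuine error at the point where you dispose of condition $(2)$: you assert that $\left(\mathcal{P}_{f}\left(\mathbb{N}^{l}\right),+\right)$ is a \emph{total} commutative semigroup under union, so that $\sigma(H)=S$ and $(2)$ holds with threshold $m=1$. In the intended setting (as in \cite{key-5}), $+$ is the \emph{partial} operation: $A+B=A\cup B$ is defined only when $A\cap B=\emptyset$. That partialness is the entire point of the example --- it is what makes $\mathcal{P}_{f}\left(\mathbb{N}^{l}\right)$ a genuinely partial commutative adequate partial semigroup, what gives content to the parenthetical ``the sum is defined'' in the definition of a VIP system and in condition $(2)$, and what makes the set-polynomial application (Theorem 5.4) nontrivial; under your total-union reading the lemma and its application would collapse to trivialities about a semilattice. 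With the correct reading one has $\sigma(H)=\left\{ B\in\mathcal{P}_{f}\left(\mathbb{N}^{l}\right):B\cap A=\emptyset\text{ for all }A\in H\right\} \neq S$, and condition $(2)$ is precisely the substantive clause: given $H$, one must choose $m$ (depending on $H$, not $m=1$) larger than $q$ and larger than every coordinate of every point of every member of $H$; then for $\gamma\nsubseteq\left\{ 1,\dots,m\right\} $ every point of $n_{\gamma}^{(s)}$ has an $X$-coordinate equal to $\max\gamma>m$, hence lies outside every member of $H$. One must also verify that the sums occurring in $(1)$ and $(2)$ are defined at all, i.e.\ that the pieces being added are pairwise disjoint: across distinct $\gamma$'s this is your support argument, but across distinct monomials $s\neq s'$ sharing the same $\gamma$ you need the separation by $q$ once more (distinct monomials differ either in the position of an $X$-slot or in a coefficient, and coefficients are $\leq q$ while $X$-values are $>q$). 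None of this is difficult given the machinery you set up, but as written your proof declares vacuous exactly the condition that carries the weight of the lemma.
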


\begin{proof}
\cite{key-5}, Lemma 4.3
\end{proof}
\begin{thm}
Let $l\in\mathbb{N}$ and let $\mathcal{P}$ be a finite family of
set polynomial over $\left(\mathcal{P}_{f}\left(\mathbb{N}^{l}\right),+\right)$
whose constant terms are empty. If $C_{n}\subseteq\mathcal{P}_{f}\left(\mathbb{N}^{l}\right),n\in\mathbb{N}$
are central sets then there exists sequences $\left(A_{n}\right)_{n=1}^{\infty}$
in $\mathcal{P}_{f}\left(\mathbb{N}^{l}\right)$ and $\left(\alpha_{n}\right)_{n=1}^{\infty}$such
that $\alpha_{n}<\alpha_{n+1}$ for each $n$ and such that for every
$F\in\mathcal{P}_{f}\left(\mathbb{N}\right)$. We have $\left\{ A_{\gamma}\right\} \cup\left\{ A_{\gamma}+P\left(\gamma\right):P\in\mathcal{P}\right\} \subseteq C_{m}$,
where $m=minF$,$\gamma=\cup_{t\in F}\alpha_{t}$ and $A_{\gamma}=\sum_{t\in F}A_{t}$.
\end{thm}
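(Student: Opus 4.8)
The plan is to read off the statement from Theorem \ref{main}, after recasting the family $\mathcal{P}$ as an adequate set of VIP systems. First I would fix the ambient semigroup $S = (\mathcal{P}_{f}(\mathbb{N}^{l}), +)$, where $+$ denotes union; this is a commutative semigroup, hence in particular a commutative adequate partial semigroup with $\delta S = \beta S$, so that ``central'' and ``idempotent in $K(\delta S)$'' carry their usual meaning. Enumerate $\mathcal{P} = \{P_{1}, \ldots, P_{k}\}$ and let $p \in K(\delta S)$ be an idempotent witnessing the centrality of the $C_{n}$, as demanded by the hypotheses of Theorem \ref{main}.

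The decisive input is the preceding Lemma. Since every $P \in \mathcal{P}$ has empty constant term, it furnishes $q \in \mathbb{N}$ and an IP ring $\mathcal{F}^{(1)} = \{\alpha \in \mathcal{P}_{f}(\mathbb{N}) : \min \alpha > q\}$ such that, on setting $v_{\alpha}^{(i)} = P_{i}(\alpha)$ for $\alpha \in \mathcal{F}^{(1)}$ and $i \in [k]$, the family $\{(v_{\alpha}^{(i)})_{\alpha \in \mathcal{F}^{(1)}} : i \in [k]\}$ is an adequate set of VIP systems in $S$. The only point that needs care is that this adequacy is supplied over the IP ring $\mathcal{F}^{(1)}$, while Theorem \ref{main} is phrased for systems indexed by all of $\mathcal{P}_{f}(\mathbb{N})$. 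To reconcile the two I would exploit that $\mathcal{F}^{(1)}$ is precisely the set of finite nonempty subsets of $\{q+1, q+2, \ldots\}$, so that the order isomorphism $q + j \leftrightarrow j$ of $\{q+1, q+2, \ldots\}$ with $\mathbb{N}$ induces an isomorphism $\mathcal{F}^{(1)} \cong \mathcal{P}_{f}(\mathbb{N})$ respecting both unions and the ordering $<$. Under this identification an adequate set of VIP systems over $\mathcal{F}^{(1)}$ is exactly the same data as one over $\mathcal{P}_{f}(\mathbb{N})$, so Theorem \ref{main} may be run inside $\mathcal{F}^{(1)}$ and its output sequences $(\alpha_{n})$ will automatically lie in $\mathcal{F}^{(1)}$.

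Applying Theorem \ref{main} to $p$, its members $(C_{n})_{n=1}^{\infty}$, and these $k$ VIP systems then yields sequences $(A_{n})_{n=1}^{\infty}$ in $S$ and $(\alpha_{n})_{n=1}^{\infty}$ in $\mathcal{F}^{(1)}$ with $\alpha_{n} < \alpha_{n+1}$ for every $n$, such that for each $F \in \mathcal{P}_{f}(\mathbb{N})$, writing $m = \min F$, $\gamma = \cup_{t \in F}\alpha_{t} \in \mathcal{F}^{(1)}$ and $A_{\gamma} = \sum_{t \in F} A_{t}$, one has
\[
\{A_{\gamma}\} \cup \{A_{\gamma} + v_{\gamma}^{(i)} : 1 \le i \le k\} \subseteq C_{m}.
\]
Because $\gamma \in \mathcal{F}^{(1)}$ we have $v_{\gamma}^{(i)} = P_{i}(\gamma)$, and since $\{P_{i} : 1 \le i \le k\} = \mathcal{P}$ the displayed inclusion is exactly $\{A_{\gamma}\} \cup \{A_{\gamma} + P(\gamma) : P \in \mathcal{P}\} \subseteq C_{m}$, which is the desired conclusion. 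I expect the re-indexing between the IP ring and $\mathcal{P}_{f}(\mathbb{N})$---together with the verification that adequacy of the VIP systems is preserved by it---to be the only step meriting attention; everything downstream is a direct substitution into Theorem \ref{main}.
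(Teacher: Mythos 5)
Your overall strategy coincides with the paper's (whose entire proof is: invoke Lemma 5.3 to turn $\mathcal{P}$ into an adequate set of VIP systems, then apply Theorem \ref{main}), but your identification of the ambient structure is wrong, and it matters. In the framework this theorem lives in --- inherited from \cite{key-5}, Section 4, where Lemma 5.3 is proved --- the operation $+$ on $\mathcal{P}_{f}\left(\mathbb{N}^{l}\right)$ is \emph{disjoint} union: $A+B$ is defined if and only if $A\cap B=\emptyset$, in which case $A+B=A\cup B$. Hence $\left(\mathcal{P}_{f}\left(\mathbb{N}^{l}\right),+\right)$ is a genuinely partial (though commutative and adequate) semigroup, $\delta S$ is a proper subset of $\beta S$, and ``central'' in the hypothesis means central relative to $K\left(\delta S\right)$ of this partial semigroup --- not central in the full semilattice $\left(\mathcal{P}_{f}\left(\mathbb{N}^{l}\right),\cup\right)$, which is what your reading gives. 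This is not cosmetic: the adequacy of the VIP systems produced by Lemma 5.3 (condition $\left(2\right)$ of the definition, which involves $\sigma\left(H\right)$) and the hypothesis of Theorem \ref{main} (an idempotent in $K\left(\delta S\right)$ with all $C_{n}\in p$) are both stated relative to the disjoint-union partial structure; under your reading the idempotent you pick lives in the minimal ideal of the Stone-\v{C}ech compactification of a \emph{different} semigroup, and neither result applies to it. Moreover, the combinatorial force of the conclusion --- that $A_{\gamma}=\sum_{t\in F}A_{t}$ and $A_{\gamma}+P\left(\gamma\right)$ are \emph{defined}, i.e. that all the relevant finite sets are pairwise disjoint --- comes precisely from the partial operation; with ordinary union the statement degenerates (for instance $A_{\gamma}+P\left(\gamma\right)=A_{\gamma}$ whenever $P\left(\gamma\right)\subseteq A_{\gamma}$).

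On the positive side, your handling of the indexing mismatch is a genuine improvement on the paper's proof, which says only ``Thus Theorem \ref{main} applies.'' Lemma 5.3 produces VIP systems indexed by the IP ring $\mathcal{F}^{\left(1\right)}=\left\{ \alpha\in\mathcal{P}_{f}\left(\mathbb{N}\right):\min\alpha>q\right\} $, whereas Theorem \ref{main} is stated for systems indexed by all of $\mathcal{P}_{f}\left(\mathbb{N}\right)$; your shift $j\mapsto j+q$ is an order- and union-preserving bijection of $\mathbb{N}$ onto $\left\{ q+1,q+2,\ldots\right\} $, so it carries adequate sets of VIP systems to adequate sets of VIP systems, and this re-indexing is exactly the bridge the paper leaves unstated. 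One further caveat, which you share with the paper itself: the hypothesis says only that each $C_{n}$ is central, while Theorem \ref{main} requires all the $C_{n}$ to belong to a \emph{single} minimal idempotent $p$; your phrase ``let $p\in K\left(\delta S\right)$ be an idempotent witnessing the centrality of the $C_{n}$'' silently assumes such a common witness exists, which the stated hypothesis does not guarantee.
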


\begin{proof}
By lemma $5.3$ there is an IP ring $\mathcal{F}^{\left(1\right)}$
such that $\left\{ \left(P\left(\alpha\right)\right)_{\alpha\in\mathcal{F}^{\left(1\right)}}:P\left(X\right)\in\mathcal{P}\right\} $
is an adequate set of VIP systems. Thus Theorem \ref{main} applies.
\end{proof}
$\vspace{0.3in}$

\textbf{Acknowledgment:} The first author acknowledge the Grant CSIR-UGC
NET fellowship with file No. 09/106(0202)/2020-EMR-I. The second author
acknowledge the support from University Research Scholarship of University
of Kalyani with id-1F-7/URS/Mathematics/2023/S-502.

$\vspace{0.3in}$


\begin{thebibliography}{DHS}
\bibitem[BH]{key-1} V. Bergelson, N. Hindman, Ramsey theory in noncommutative
semigroups, Transl. Am. Math. Soc. 330 (1992) 433--446. 

\bibitem[DHS]{key-2}D. De, N. Hindman, D. Strauss, A new and stronger
central set sets theorem, Fundam. Math. $199\left(2008\right)155-175$.

\bibitem[F]{key-3} H. Furstenberg: Recurrence in Ergodic Theory and
Combinatorial Number Theory, Princeton University Press, Princeton,
N.J., 1981.

\bibitem[G]{key-4}A. Ghosh, A generalised central sets theorem in
partial semigroups, Semigroup Forum 100 (1), 169-179. 

\bibitem[HM]{key-5} N. Hindman, R. McCutcheon, VIP systems in Partial
Semigroups, Discrete Math. $240\left(2001\right),45-70.$

\bibitem[HP]{key-6}N. Hindman, K. Pleasant, Central sets theorem
for arbitrary adequate partial semigroups, Topology Proceedings $58\left(2021\right),183-206.$

\bibitem[HS]{key-7} N. Hindman,D. Strauss, Algebra in the Stone-Cech
Compactification: Theory and Applications, 2nd edition, de Gruyter
Berlin, 2012.

\bibitem[M]{key-8} J. Mcleod, Central sets in commutative adequate
partial semigroups,Topology Proceedings $29\left(2005\right),567-576.$

\bibitem[P]{key-9}D. Phulara, A generalize central sets theorem and
applications, Topology and its applications $196\left(2015\right)92-105$.

\bibitem[Pl]{key-10}K. Pleasant, When Ramsey Meets Stone-\v{C}ech
: Some New Results in Ramsey Theory. Ph.D. dissertation. Howard University
(Washington, D.C.). 2017. 

\bibitem[PP]{key-11}S. Pal, J. Podder, A study of some generalized
central sets theorem near zero along phulara's way. arXiv:2207.01247. 

\bibitem[SY]{key-12}H. Shui and H. Yang, Nonmetrizable topological
dynamic characterization of central sets, Fundamenta Math. 150 (1996),
1-9.
\end{thebibliography}
\end{document}